\documentclass[10pt]{amsart} 
\usepackage{amscd,amssymb,graphicx,colortbl}

\usepackage{amsfonts}
\usepackage{amsmath}
\usepackage{amsxtra}
\usepackage{latexsym}
\usepackage[mathcal]{eucal}  
\usepackage{enumitem} 
\usepackage{tikz}

\input xy
\xyoption{all}

\usepackage[pdftex,bookmarks,colorlinks,breaklinks]{hyperref}

\newtheorem{thm}{Theorem}[section]
\newtheorem{f}[thm]{Fact} 
\newtheorem{cor}[thm]{Corollary}
\newtheorem{lem}[thm]{Lemma}
\newtheorem{problem}[thm]{Problem}
\newtheorem{question}[thm]{Question}

\newtheorem{prop}[thm]{Proposition}

\theoremstyle{definition}
\newtheorem{defin}[thm]{Definition}

\theoremstyle{remark}
\newtheorem{remark}[thm]{Remark}

\newtheorem{ex}[thm]{Example}

\numberwithin{equation}{section}

\newcommand{\delete}[1]{} 

\def\eps{{\varepsilon}}
\newcommand{\sk}{\vskip 0.2cm}

\newcommand{\ben}{\begin{enumerate}}

\newcommand{\een}{\end{enumerate}}
\newcommand{\bit}{\begin{itemize}}

\newcommand{\eit}{\end{itemize}}

\def\R {{\mathbb R}}
\def\N {{\mathbb N}}
\def\Z {{\mathbb Z}}

\newcommand{\cU}{\mathfrak{U}}

\def\St{{\mathrm St}\,}

\def\Ucal{\mathcal{U}}

\def\rank{\operatorname{rank}} 

\def\LOTS{\operatorname{LOTS}}

\def\Ends{\operatorname{Ends}}

\def\a{\alpha}

\def\t{\tau}

\def\g{\gamma}

\hypersetup{
	linkcolor=blue,
	citecolor=red,
	urlcolor=red
}

\begin{document} 

\title[]
{Intrinsic uniform structure on median algebras} 
 
\author[]{Michael Megrelishvili} 
\address{Department of Mathematics,
Bar-Ilan University, 52900 Ramat-Gan, Israel}
\email{megereli@math.biu.ac.il}
\urladdr{http://www.math.biu.ac.il/$^\sim$megereli}
 
 \thanks{Supported by the Gelbart Research Institute at the Department of Mathematics, Bar-Ilan  University}  
 
 \subjclass[2020]{54E15, 54H20, 54D35, 52A01, 20F65}     
 
 \keywords{end-compactification, median algebra, median compactification, median uniformity, Roller compactification, shadow topology, tame dynamical system}  

\date{August, 2026} 
 
\begin{abstract}
We introduce the \textit{median uniformity} $\mathcal U_{\mathrm m}$, an intrinsic precompact convex uniform structure on a median algebra. It is Hausdorff under natural assumptions, for instance for finite-rank median algebras. In the Hausdorff case, its uniform completion yields the \textit{Minimal Median Compactification} (MMC). The induced topology $\tau_{\mathrm m}$ provides a natural higher-rank analogue of the interval topology on linearly ordered sets and of the shadow topology on rank-one median algebras. When all intervals in the median algebra $X$ are finite, the MMC is the unique proper median compactification of $(X,\tau_{\mathrm m})$; in particular, it coincides with the Roller compactification.
	
We apply this uniform framework to continuous actions of a topological group $G$ by median automorphisms. We show that the MMC is a median $G$-compactification. In the finite-rank case, the resulting compact $G$-system is Rosenthal representable and dynamically tame.
\end{abstract}
\maketitle

\setcounter{tocdepth}{1}
\tableofcontents

\section{Introduction} \label{s:intro}

Abstract median algebra theory provides a geometric bridge between tree-like structures,
linearly ordered sets, and non-positively curved CAT(0) cube complexes, while also
connecting naturally with general convexity theory, topology, and metric geometry.

By recent results of \cite{Me-CircTop}, there exists a canonical precompact convex \textit{interval uniformity} $\mu_{\leq}$ compatible with the interval topology $\lambda_{\leq}$, whose uniform  completion yields the classical Dedekind compactification. 
Every linearly ordered set carries a natural median algebra structure of rank 1. Our aim is to generalize this interval uniformity to median algebras. 

The theory of convex structures developed by van de Vel~\cite{Vel-book} and many
other authors provides a rich framework for topologies and uniformities compatible
with a given convexity.

The median uniformity $\mathcal U_{\mathrm m}$ introduced in the present paper is constructed in a median-specific and purely algebraic way.  
Motivated by the uniformity $\mu_{\leq}$ on linearly ordered spaces, we introduce an \textit{intrinsic median uniformity} $\mathcal U_{\mathrm m}$ on median algebras $(X,m)$, generated by two-member branch covers $\{B^u_v, B^v_u\}$ associated with nontrivial chain intervals $[u,v]$ of $X$ (cf. Definition \ref{d:StarMedianUniformity}).  

The uniformity \(\mathcal U_{\mathrm m}\) is precompact because it is generated
by finite two-member covers. It is convex because every branch
associated with a chain interval is convex. 
Under natural assumptions, for instance, when $X$ has finite
rank the uniformity $\mathcal U_{\mathrm m}$ is Hausdorff. 
 In the Hausdorff case, the uniform completion \(\widehat X\) of $(X,\mathcal U_{\mathrm m})$ serves as the \textit{Minimal Median Compactification} (MMC) of the topological median algebra $(X,\tau_{\mathrm m})$, where \(\tau_{\mathrm m}\) is the topology of $\Ucal_{\mathrm m}$. We call \(\tau_{\mathrm m}\) the \textit{intrinsic m-topology} of $(X,m)$. 
Equivalently, it can be described using the closed subbase consisting of the
corresponding shadows \(S^u_v\) and \(S^v_u\) coming from nontrivial chain intervals \([u,v]\). 
For every separable metrizable \((X,\tau_{\mathrm m})\) the MMC \(\widehat X\) is a metrizable compactum 
(Theorem \ref{t:metrizableMMC}). 

The topology \(\tau_{\mathrm m}\) acts as a natural higher-rank generalization of the interval topology on linearly ordered sets and of the \textit{shadow topology} on pretrees~\cite{Mal14}. 
In the case of real pretrees, it is also 
closely related to Bowditch's order topology; see
\cite[Section~7]{B}.   

For rank-one median algebras (that is, median pretrees) \(\tau_{\mathrm m}\) gives the well known  \textit{shadow topology}. The MMC \(\widehat X\) of weakly complete median pretrees  coincides with the end-compactification in the sense of Malyutin (Theorem~\ref{t:MedPretreeMMC}).

Proper median compactifications exhibit several distinct forms of behaviour. 
For some natural topologies, a proper median compactification need
not exist even in rank one: the metric hedgehog
$(X,\tau_d)$ of Corollary~\ref{c:0} is such an example.

In Theorem~\ref{thm:unique_compactification}, we show that for \textit{algebraically discrete} median algebras  (equivalently algebras with finite intervals) equipped with the intrinsic topology $\tau_{\mathrm m}$, the MMC is the \textbf{unique}  proper median compactification. In particular, it coincides with the Roller compactification.

Proposition~\ref{p:RFandMMC} shows that if a topological median algebra $(X,m,\sigma)$ has compact intervals and satisfies $(T^m_2)$, then Roller--Fioravanti (RF) compactification  is canonically isomorphic to the MMC of 
$(X,\tau_{\mathrm m})$. Note that RF compactification of $X$ is defined using canonical median retractions on all (compact) intervals.  
The intrinsic uniformity, however, is defined for arbitrary median
algebras using only canonical retractions onto chain intervals and
does not require compactness of all intervals. When this uniformity
is Hausdorff, its completion is characterized by a minimal universal
property among proper median compactifications of the specifically
topologized algebra $(X,\tau_{\mathrm m})$. Throughout the paper, the term
``minimal'' is understood in this relative sense. 
   
   By Proposition~\ref{p:rigid}, uniqueness holds for
   finite-rank median algebras whose intervals are compact in the intrinsic topology. 
   In particular, Theorem~\ref{t:coincidence} applies to every
   finite-dimensional locally compact CAT(0) cube complex endowed with its cubical $\ell^1$-metric $d$. Using Fioravanti's embedding theorem~\cite[Proposition~4.20]{Fioravanti20}, we also show that $\tau_{\mathrm m}$ coincides with the metric topology $\tau_d$.
   
   On the other hand, uniqueness fails in general, even in rank one. The ordered median algebra
   $X=\mathbb Q\cap[0,1]$, equipped with its intrinsic topology (equivalently, its order topology), admits many proper median compactifications, reflecting the classical nonuniqueness of compact LOTS extensions.
    
\sk 
Furthermore, we apply this machinery to actions of topological groups
on median algebras equipped with their intrinsic topology. In
Theorem~\ref{t:Gcompactification}, we show that if a topological group \(G\) acts 
(separately) continuously on \((X,\tau_{\mathrm m})\) by median automorphisms, then
\(\mathcal U_{\mathrm m}\) is \(G\)-bounded. Consequently, if
$\mathcal U_{\mathrm m}$ is Hausdorff, the action extends uniquely to a jointly continuous
action on the MMC, and the canonical map
\(
X\to \widehat X
\) 
is a proper median \(G\)-compactification. 
For finite-rank algebras \(X\), recent results from
\cite{Me-TameMedian} imply that the compact \(G\)-system
\(\widehat X\) is Rosenthal representable and hence dynamically tame. 
  
Theorem~\ref{t:Gcompactification} itself is proved independently in
the present paper. The companion preprint~\cite{Me-TameMedian},
which treats Roller--Fioravanti compactifications under
compact-interval assumptions, is used here only for the final
Rosenthal-representability and tameness conclusion in the
finite-rank case; see Remark~\ref{r:CompanionComparison}.  
For further discussion of the complexity hierarchy in dynamical systems and the important role of tame systems, we refer the reader to~\cite{GM-tLN,GM-D,GM-TC,Me-b}. 

\sk
\noindent\textbf{Acknowledgment.}
The author is very grateful to the anonymous referee for a careful and constructive report, which led to several important mathematical corrections and substantial improvements throughout the paper.

\section{Median algebras}  
\subsection*{Preliminaries}   

A \textit{median algebra} is a set $X$ with a ternary operation 
$m \colon X^3 \to X$ satisfying the standard median axioms.  
Frequently we write $abc$ instead of $m(a,b,c)$. 
Recall one possible system of axioms 
(see \cite{Sholander54,Vel-book,BowditchMedian}) defining median algebras:  

\begin{itemize}
	\item [(M1)] The operation $m$ is symmetric in all three variables.
	\item [(M2)] $abb=b$. 
	\item [(M3)] $(axb)xc=ax(bxc)$.      
\end{itemize}

A map $f \colon X_1 \to X_2$ between median algebras is said to be a \textit{homomorphism} or \textbf{median preserving} (MP) if 
$f(xyz)=f(x)f(y)f(z)$. Equivalently: 
for every convex subset \(C\subseteq X_2\), the preimage
\(f^{-1}(C)\) is convex in \(X_1\) (see, for example, \cite[Ch.1, 6.11]{Vel-book}). 

For every pair $x,y \in X$ we have the \textit{interval} $[x,y]_m:=\{z \in X: xyz=z\}$. 
Usually, we omit the subscript and write simply $[x,y]$, when the context is clear. Always, $[x,x]=\{x\}$, $[x,y]=[y,x]$. 

We say that a pair of distinct elements $a,b$ in $X$ is \textit{adjacent} if $[a,b]=\{a,b\}$. 

For every triple $x,y,z$  in $(X,m)$ and the intervals $[x,y], [y,z],  [x,z]$ we have a unique common point which is the median 
$
[x,y] \cap [y,z] \cap [x,z] = \{xyz\}. 
$   

For every pair $a,b\in X$ there is a natural partial order $\le_a$ on $[a,b]$ defined, for
$x,y\in [a,b]$, by 
\[
x\le_a y \quad \Longleftrightarrow \quad x\in [a,y].
\]
Equivalently, $x\le_a y$ iff $m(a,x,y)=x$. The opposite partial order is $\le_b$.

If $[a,b]$ is a chain, that is, if it admits a linear order $\preceq$ whose order-median coincides
with the median inherited from $X$, then $\preceq$ is necessarily either $\le_a$ or $\le_b$.
Indeed, since $[a,b]$ is the whole interval between $a$ and $b$, the points $a$ and $b$ must be
the two endpoints of the order $\preceq$. If $a$ is the least element, then
\[
x\preceq y \quad \Longleftrightarrow \quad m(a,x,y)=x
\quad \Longleftrightarrow \quad x\le_a y.
\]
If $b$ is the least element, then $\preceq=\le_b$.

A subset $C \subseteq X$ is \emph{convex} if $[x,y] \subseteq C$ for all $x,y \in C$. 
Every convex subset is a subalgebra. 
The intersection of convex subsets is convex.  

Several remarkable structures are median algebras under their natural medians. For instance,  distributive lattices
(e.g.\ linear orders, Boolean algebras, and power sets   $\mathcal P(S)$). 

If \(\leq\) is a linear order on \(X\), one defines the associated betweenness median \(m_\leq\) by declaring \(m_\leq(a,b,c)=b\) precisely when 
\(a\leq b\leq c\) or \(c\leq b\leq a\). 
A median algebra $(X,m)$ is said to be \textit{linear}
if there exists a linear order $\leq$ on $X$ such that $m=m_{\leq}$. 
In this case we also use the term \textit{chain}.

\begin{remark} \label{r:linear}
	For every linear median $(X,m)$	we have the following property: $m(x,y,z) \in \{x,y,z\}$ for every $x,y,z \in X$. This means that every nonempty subset of $X$ is a subalgebra. Some authors use the term \textit{conservative algebra}. Note that conservativity almost characterizes linear medians. More precisely, conservative median algebra is either linear or a $2$-hypercube $\{0,1\}^2$. For details see \cite{CMT} and \cite[p. 22]{BowditchMedian}.   
\end{remark}

A wall is a pair $W=\{W^0,W^1\}$ of nonempty disjoint convex subsets whose union is $X$.
The sets $W^0$ and $W^1$ are called halfspaces. 
Denote by $\mathcal{W}(X)$ and $\mathcal{H}(X)$ all walls and halfspaces of $X$. 

\begin{defin} \label{d:MedianRank} (see e.g., \cite{Vel-book,BowditchMedian,Fioravanti20})
	The \emph{rank} of a nontrivial median algebra $X$ (notation: $\rank(X)$) is the supremum of the numbers $n \in \N$ such that the Boolean hypercube $\{0,1\}^n$ embeds as a median subalgebra into $X$.   
	We adopt the convention that the one-point median algebra has rank \(0\).	
\end{defin}

The class of finite rank algebras is closed under taking subalgebras and finite products. The rank of the product $X_1 \times X_2$ of two median algebras is $\rank(X_1)+\rank(X_2)$. 
Important examples of algebras with rank $k \in \N$ are Boolean hypercubes $\{0,1\}^{k}$, usual cubes $[0,1]^{k}$ and 
CAT(0) cube complexes with dimension $k$. 

Onto homomorphisms cannot increase the rank \cite[Section 8.2]{BowditchMedian}. 
This property, combined with Remark \ref{r:linear}, implies the following observation.

\begin{lem} \label{l:HomImageChain}
	Let $X$ and $Y$ be median algebras, and let $f \colon X \to Y$ be a median homomorphism. If $C \subseteq X$ is a chain, then its homomorphic image $f(C)$ is a chain in $Y$.
\end{lem}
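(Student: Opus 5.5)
The plan is to combine Remark~\ref{r:linear} with the fact, quoted just before the lemma from \cite[Section 8.2]{BowditchMedian}, that onto homomorphisms cannot increase rank. First I would check that $f(C)$ is a subalgebra of $Y$. A chain $C$ is conservative, meaning $m(a,b,c)\in\{a,b,c\}$ for all $a,b,c\in C$. Since $f$ is median preserving, for $f(a),f(b),f(c)\in f(C)$ we get
\[
m(f(a),f(b),f(c))=f(m(a,b,c))\in\{f(a),f(b),f(c)\}.
\]
So $f(C)$ is closed under the median, and it is itself a conservative median algebra.

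By the dichotomy in Remark~\ref{r:linear}, a conservative median algebra is either linear or isomorphic to the square $\{0,1\}^2$. It therefore suffices to exclude $f(C)\cong\{0,1\}^2$. The restriction $f|_C\colon C\to f(C)$ is an onto homomorphism. A nontrivial chain has rank $1$, since $\{0,1\}^2$ is not conservative and so cannot embed as a subalgebra of a chain; a one-point chain has rank $0$. Since onto homomorphisms cannot increase rank, $\rank(f(C))\le 1$. On the other hand, $\{0,1\}^2$ has rank $2$, so $f(C)$ cannot be the square and must be linear, i.e.\ a chain in $Y$.

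As a sanity check that avoids the cited rank result, there is a direct argument. Suppose four points $f(c_1),\dots,f(c_4)$ form a square with diagonals $\{f(c_1),f(c_3)\}$ and $\{f(c_2),f(c_4)\}$. The preimages $c_i$ are linearly ordered in $C$. Take $c_i$ to be the middle element of a triple consisting of one diagonal pair together with a third point. Then the median of the images is $f(c_i)$, whereas in the square the median of two opposite corners and any third corner is that third corner. A short case check on the order of $c_1,\dots,c_4$ produces a contradiction.

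The only real obstacle is the fine print in the definition: the lemma asks that $f(C)$ be a chain in the sense of carrying a linear order whose betweenness median equals the median inherited from $Y$. This holds because $f(C)$ is a subalgebra and the identification with a linear median is made inside that subalgebra. Degenerate images, with one or two points, are trivially chains.
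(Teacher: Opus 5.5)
Your main argument is the paper's proof: $f(C)$ is conservative, the onto homomorphism $f|_C\colon C\to f(C)$ cannot raise rank, and the dichotomy of Remark~\ref{r:linear} then forces $f(C)$ to be linear.

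One justification is wrong, however. You say that $\{0,1\}^2$ is not conservative, but it \emph{is}: the median of any three of its corners is one of them. That is exactly why it is the exceptional case in the dichotomy you invoke a line earlier. So this cannot be the reason a chain has rank at most $1$.

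The correct reason is that every subset of a chain is a subalgebra that is linear in the induced order, whereas $\{0,1\}^2$ is not linear. In a four-element chain the two endpoints are never the median of three distinct points. In the square, every corner is the median of itself and its two neighbours. The paper simply takes $\rank(C)\le 1$ for granted, so this repair is all that is needed.

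Your ``sanity check'' is a sound alternative that avoids the cited rank-monotonicity result altogether. Let $f(c_i)=s_i$, with $\{s_1,s_3\}$ and $\{s_2,s_4\}$ the diagonals. Injectivity of $f$ on $\{c_1,\dots,c_4\}$ forces $c_2$ and $c_4$ to lie strictly between $c_1$ and $c_3$. It likewise forces $c_1$ and $c_3$ to lie strictly between $c_2$ and $c_4$. Both cannot hold in a linear order. Combined with the conservative dichotomy, this gives a self-contained proof.
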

\begin{proof}
	Since $C$ is a chain, for every $a,b,c \in C$ we have
	$m(a,b,c) \in \{a,b,c\}$.
	Let $x,y,z \in f(C)$. Choose $a,b,c \in C$ such that
	$f(a)=x$, $f(b)=y$, and $f(c)=z$.
	Then
	\[
	m(x,y,z)=m(f(a),f(b),f(c))=f(m(a,b,c)) \in \{x,y,z\}.
	\]
	So $f(C)$ is a conservative median algebra. 
	Now the restriction $f|_C:C\to f(C)$ is an onto median homomorphism. Since
	onto homomorphisms do not increase rank and $\rank(C)\leq 1$, we get
	$
	\rank(f(C))\leq 1.
	$
	By Remark~\ref{r:linear}, a conservative median algebra is either linear or
	contains a $2$-cube. The latter is impossible because a $2$-cube has rank $2$.
	Hence $f(C)$ is linear, i.e. a chain.
\end{proof}

Rank-1 algebras are 
\textit{median pretrees} (in terms of Bowditch). In this case $\le_a$ is a linear ordering on any median interval $[a,b]$. It is a useful treelike structure which naturally generalizes linear orders and the betweenness relation on dendrons (e.g., dendrites), simplicial and $\R$-trees.  
Median preserving continuous group actions on median pretrees are investigated in \cite{GM-D}. 

A \emph{topological median algebra} (tma) 
is a topological space $(X,\tau)$ equipped with a continuous median operation $m \colon X^3 \to X$. 
Mainly we assume that every tma is Hausdorff. 
The only exception we allow is for intrinsic median topology $\tau_{\mathrm m}$ which need not be Hausdorff in general (though makes the median continuous). 

Many important examples of tma come from \textit{median metric spaces}, which play a major role in metric geometry and group theory; see, for example,  \cite{BowditchMedian, Fioravanti20,Vel-book}. 

For every median algebra $(X,m)$ and a pair of elements $u,v \in X$ we have a canonical median retraction 
$\phi_{u,v} \colon X \to [u,v]$, where $\phi_{u,v}(z) = m(u,z,v)$. If $X$ is a tma then every $\phi_{u,v}$ is continuous.   

A topological median algebra \(X\) is said to be \textit{locally convex} if every point of \(X\) has a neighborhood base consisting of convex sets.    
Median subalgebras of locally convex topological median algebras, endowed with the subspace topology, are locally convex. Arbitrary products of locally convex topological median algebras are locally convex as well.   
Every projection on each coordinate is a continuous MP map. 
In particular, the cube  $[0,1]^{\kappa}$ is a compact and locally convex median algebra for every  cardinal ${\kappa}$.  
   
	A compact locally convex median space $K$ is isomorphic as a topological median algebra to a subalgebra of the Tychonoff cube $[0,1]^{\kappa}$.  
	This can be derived from \cite[4.13.3 and  4.16]{Vel-book}.   
		If $K$ is zero-dimensional, then Tychonoff  cube can be replaced by the Cantor cube $\{0,1\}^{\kappa}$.  
	
Every compact Hausdorff tma of finite rank is locally convex (\cite[12.2.4 and 12.2.5]{BowditchMedian}).   

\begin{f} \label{f:facts2} Let $X$ be a Hausdorff topological median algebra and let $Y$ be its dense subalgebra. Then 
	\begin{enumerate} 		 		 
		\item \cite[Lemma 12.3.4]{BowditchMedian}  $\rank(Y)=\rank(X)$.
		\item \cite[Lemma 12.4.3]{BowditchMedian} If $Y$ is linear, then also $X$ is linear.  
	\end{enumerate}	
\end{f}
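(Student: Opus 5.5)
The second item is a direct consequence of the first, so the work lies in item (1). For item (2) I would first show that $X$ is conservative, then use (1) to exclude the $2$-cube. Conservativity: fix $x,y,z\in X$ and choose nets $x_\alpha\to x$, $y_\alpha\to y$, $z_\alpha\to z$ in $Y$ over a common directed set. Since $Y$ is linear, Remark~\ref{r:linear} gives $m(x_\alpha,y_\alpha,z_\alpha)\in\{x_\alpha,y_\alpha,z_\alpha\}$ for every $\alpha$. One of the three alternatives occurs cofinally, so passing to a subnet we may assume, say, $m(x_\alpha,y_\alpha,z_\alpha)=x_\alpha$ for all $\alpha$. Continuity of $m$ and uniqueness of limits in the Hausdorff space $X$ then give $m(x,y,z)=x$. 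Hence $X$ is conservative. By item (1), $\rank(X)=\rank(Y)\le 1$, so $X$ contains no $2$-cube, and Remark~\ref{r:linear} forces $X$ to be linear.

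For item (1), the inequality $\rank(Y)\le\rank(X)$ is immediate, since a cube embedded in $Y$ is a cube embedded in $X$. For the reverse inequality, fix an embedded cube $\{x_e: e\in\{0,1\}^n\}\subseteq X$ and try to realize a copy of it inside $Y$ by perturbation. The difficulty is that being a subcube means satisfying median identities, which are closed conditions and are not preserved under approximation. I would avoid this by not perturbing all $2^n$ vertices independently. Instead I would perturb only a small generating family, namely $a=x_{0\cdots0}$, $b=x_{1\cdots1}$ and the vertices $c_i=x_{e_i}$, and then reconstruct the remaining vertices by a fixed median polynomial. Concretely, set $p_\emptyset(a,b,\bar c)=a$ and $p_{S\cup\{i\}}=m\bigl(p_S,c_i,b\bigr)$ for $i\notin S$. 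One checks inside the cube that $p_S(a,b,\bar c)=x_{1_S}$.

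The key lemma is then the following. For arbitrary points $a',b',c'_1,\dots,c'_n$ in a median algebra, the assignment $1_S\mapsto p_S(a',b',\bar c')$ is a median homomorphism from $\{0,1\}^n$. If that fails for completely arbitrary inputs, I would first replace each $c'_i$ by $m(a',c'_i,b')\in[a',b']$ and, if necessary, by a further median normalization (for instance, projecting $c'_i$ onto a suitable interval determined by the other $c'_j$), so that the formula becomes well defined and independent of the order in which $S$ is built. With such a lemma in hand, choose $a',b',c'_i\in Y$ close to $a,b,c_i$. Since $Y$ is a subalgebra, the points $p_S(a',b',\bar c')$ lie in $Y$. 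By continuity of the median polynomials they are close to the distinct points $x_{1_S}$, so they are pairwise distinct because $X$ is Hausdorff. An injective homomorphism from $\{0,1\}^n$ is an embedding of a subcube, which gives $\rank(Y)\ge n$.

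I expect the main obstacle to be this normalization lemma: producing, from arbitrary points, a median-polynomial family that is always the image of a homomorphism $\{0,1\}^n\to Y$. If the explicit recursion is hard to verify, my fallback is the wall characterization of rank: rank is at least $n$ iff there are $n$ pairwise transverse walls. I would separate the finitely many cube vertices by convex neighbourhoods and use density of $Y$ to show that the restricted halfspaces remain pairwise transverse in $Y$. This route has its own cost, since it needs enough convex neighbourhoods in $X$.
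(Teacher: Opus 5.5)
Your argument for (2) is sound. The subnet argument shows that $X$ is conservative, and (1) then rules out the square in Remark~\ref{r:linear}. You can even bypass (1): a conservative non-linear median algebra is the four-point square, a finite Hausdorff space whose only dense subset is itself, which contradicts linearity of $Y$. The paper gives no proof of either item and only cites Bowditch, so the question is whether your argument for (1) stands on its own.

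It does not yet: there is a genuine gap in (1). Your key lemma is false as stated, and the proposed repair targets the wrong defect. Order-independence is automatic, because $p_S$ is the meet of $a'$ and the $c'_i$ ($i\in S$) in the semilattice $x\wedge_{b'}y=m(x,y,b')$, which is associative by (M3). What fails is the median identities, which are exactly the closed conditions you wanted to avoid. Concretely, in $\R^2$ with the coordinatewise median, take $a'=(0,0)$, $b'=(1,1)$, $c'_1=(1,0)$ and $c'_2=(\zeta,1)$ with small rational $0<\zeta<1$. This is a perturbation of the standard square lying in the dense subalgebra $\mathbb Q^2$, and already $c'_i\in[a',b']$, so projecting does nothing. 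Then $p_\emptyset=(0,0)$, $p_{\{1\}}=(1,0)$, $p_{\{2\}}=(\zeta,1)$ and $p_{\{1,2\}}=(1,1)$, but $m(p_{\{1\}},p_{\{2\}},p_\emptyset)=(\zeta,0)$, so these four points are not even a subalgebra. The wall fallback does not fill the gap either. It needs convex neighbourhoods, i.e.\ local convexity of $X$, which is not assumed. Moreover, halfspaces of $X$ have no topological regularity, so density of $Y$ alone does not show that the restricted walls still cross in $Y$.

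The missing idea is to work in the distributive lattice $[a',b']$, with $x\wedge y=m(a',x,y)$ and $x\vee y=m(b',x,y)$; its lattice median is the median of $X$. Put $x_i=m(a',c'_i,b')$, $z=a'\vee\bigvee_{i<j}(x_i\wedge x_j)$ and $q_S=z\vee\bigvee_{i\in S}x_i$, with empty joins equal to $a'$. Distributivity gives $q_S\wedge q_T=z\vee\bigvee_{i\in S,\,j\in T}(x_i\wedge x_j)=q_{S\cap T}$, since the terms with $i\neq j$ lie below $z$. Likewise $q_S\vee q_T=q_{S\cup T}$. So $1_S\mapsto q_S$ is a lattice homomorphism, hence a median homomorphism $\{0,1\}^n\to X$, for \emph{arbitrary} inputs. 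On the original cube one gets $z=a$ and $q_S=x_{1_S}$. With this lemma the rest of your plan goes through. The $q_S$ are continuous median polynomials and $Y^{n+2}$ is dense in $X^{n+2}$. Hausdorff separation of the $2^n$ vertices then yields an injective homomorphism $\{0,1\}^n\to Y$, so $\rank(Y)\ge n$.
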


\begin{lem} \label{l:IntervalClosure}
	Let $K$ be a Hausdorff topological median algebra, and let $X$ be a dense median
	subalgebra of $K$. For any $u, v \in X$, the median interval $[u,v]_K$ in $K$
	is the topological closure of the interval $[u,v]_X$ in $K$. That is,
	\(
	[u,v]_K = \overline{[u,v]_X}^K.
	\) 
	
	Furthermore, if $[u,v]_X$ is a chain, then also $[u,v]_K$ is a chain.
\end{lem}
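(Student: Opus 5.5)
Two inclusions and then the chain claim.

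For $\overline{[u,v]_X}^K\subseteq [u,v]_K$: in a Hausdorff space $K$, the interval $[u,v]_K=\{z\in K: m(u,z,v)=z\}$ is the equalizer of the continuous maps $\phi_{u,v}\colon z\mapsto m(u,z,v)$ and $\mathrm{id}_K$, so it is closed. Since $X$ is a subalgebra, the median of $X$ is the restriction of the median of $K$, hence $[u,v]_X=[u,v]_K\cap X\subseteq [u,v]_K$. Taking closures gives the inclusion.

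For $[u,v]_K\subseteq \overline{[u,v]_X}^K$: fix $z\in[u,v]_K$. By density of $X$ there is a net $x_i\in X$ with $x_i\to z$. Put $y_i=m(u,x_i,v)$. Since $u,v,x_i\in X$ and $X$ is a subalgebra, $y_i\in X$, and $y_i\in[u,v]_X$ because $m(u,y_i,v)=y_i$ (retraction property, axiom (M3)). By continuity of the median, $y_i\to m(u,z,v)=z$, so $z\in\overline{[u,v]_X}^K$. No step here is a real obstacle; one only needs the retraction $\phi_{u,v}$ to be continuous, which holds in any topological median algebra.

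For the chain part, we want to apply Fact~\ref{f:facts2}(2) to $[u,v]_K$ with dense subset $[u,v]_X$. First, $[u,v]_K$ is convex in $K$, so it is a median subalgebra. It is closed, hence Hausdorff with the subspace topology and with continuous restricted median, i.e.\ a Hausdorff topological median algebra. Second, $[u,v]_X$ is a subalgebra of $X$, hence of $K$, contained in $[u,v]_K$, and it is dense there by the first part. Since $[u,v]_X$ is linear, Fact~\ref{f:facts2}(2) gives that $[u,v]_K$ is linear, i.e.\ a chain.

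The only point needing care is checking that the hypotheses of Fact~\ref{f:facts2}(2) hold for the pair $([u,v]_K,[u,v]_X)$: the ambient algebra is a Hausdorff tma, and the dense part is a subalgebra. Both are routine.
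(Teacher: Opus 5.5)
Your proof is correct and follows the paper's argument: $[u,v]_K$ is closed as the fixed-point set of the continuous retraction $\phi_{u,v}$ in the Hausdorff space $K$. The reverse inclusion comes from density of $X$ and continuity of $\phi_{u,v}$; your net argument is the paper's inclusion $\phi_{u,v}(\overline{X}^K)\subseteq\overline{\phi_{u,v}(X)}^K$ written out. The chain claim follows from Fact~\ref{f:facts2}(2) applied to the dense subalgebra $[u,v]_X$ of $[u,v]_K$. One cosmetic point: $[u,v]_K$ is Hausdorff simply because it is a subspace of $K$, not because it is closed, but this does not affect the argument.
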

\begin{proof}
	Let $A=[u,v]_X$ and $B=[u,v]_K$. Since
	\(
	B=\{z\in K:\phi_{u,v}(z)=z\}
	\)
	and $K$ is Hausdorff, the set $B$ is closed in $K$. Since
	$A\subseteq B$, we obtain $\overline A^K\subseteq B$.
	
	Conversely, since $X$ is dense in $K$ and $\phi_{u,v}$ is
	continuous,
	\[
	B=\phi_{u,v}(K)
	=\phi_{u,v}(\overline X^K)
	\subseteq\overline{\phi_{u,v}(X)}^K
	=\overline A^K.
	\]
	Hence $[u,v]_K=\overline{[u,v]_X}^{\,K}$. 
	If $[u,v]_X$ is a chain, then $[u,v]_K$ is also a chain by
	Fact~\ref{f:facts2}(2), applied to the dense subalgebra
	$[u,v]_X$ of $[u,v]_K$.
\end{proof}

\subsection*{Median Uniformities and Compactifications}

In this section, we formalize the abstract relationship between uniform structures and compactifications in the category of median algebras.  
Recall some auxiliary definitions. 

For two covers $\alpha,\beta$ of a set $X$, we say that $\alpha$ \emph{refines} $\beta$ if for every $A \in \alpha$ there exists $B \in \beta$ such that $A \subseteq B$. 
Notation: $\alpha \succ \beta$.   
The \textit{star} of a subset $C \subset X$ with respect to $\a$ is the set 
$$
\St(C,\a)=\cup\{A \in \a: A \cap C \neq \emptyset\}. 
$$
If $C:=\{c\}$ is a singleton, we simply write $\St(c,\a)$. 
So, $\St(c,\a)=\cup\{A \in \a: c \in A\}$. 
The collection $\a^*:=\{\St(A,\a): A \in \a\}$ is a covering and is called the \textit{star of $\a$}. Always, 
$\a \succ \a^*$. 

For the standard definitions about uniform structures in terms of \textit{coverings} we refer to \cite{Isb,Eng89}.

\begin{defin} \label{d:cov-unif} \cite{Isb,Eng89}    
	Let $\mathcal{U}$ be a family of coverings on a set $X$. 
	Then $\mathcal{U}$ is said to be a \textit{(covering) uniformity} on $X$ if: 
	\begin{itemize}
		\item [(C1)] $\a,\beta \in \mathcal{U}$ implies that 
		$\a \wedge \beta:=\{A \cap B: A \in \a, B \in \beta\} \in \mathcal{U}$; 
		\item [(C2)] $\a \in \mathcal{U}$ and $\a \succ \beta$ imply that $\beta \in \mathcal{U}$; 
		\item [(C3)] every element in $\mathcal{U}$ has a star-refinement in $\mathcal{U}$ (meaning that for every  $\beta \in \mathcal{U}$ there exists $\a \in  \mathcal{U}$ such that $\a^* \succ \beta$).   
	\end{itemize} 
\end{defin}

A subfamily $\mathcal{B} \subset {\mathcal U}$ such that each $\eps \in {\mathcal U}$ has a refinement $\delta \succ  \eps$ with
$\delta \in \mathcal{B}$, is said to be a (uniform) \emph{base} of ${\mathcal U}$. A subfamily $\Gamma \subset {\mathcal U}$ is a \textit{subbase} (or, a \textit{prebase}) if the enriched family $\Gamma^{\wedge}:=\{\g_1 \wedge \cdots \wedge \g_n : \ \g_i \in \Gamma, n \in \N\}$ of common refinements is a base of ${\mathcal U}$.  
An abstract set $\mathcal{B}$ of coverings on $X$ is a base of some uniformity $\cU$ if and only if 
$$
	\forall \ P_1, P_2 \in \mathcal{B} \ \ \exists P_3 \in \mathcal{B} \ \ P_3^{*} \succ P_1 \wedge P_2. 
$$ 
Each uniformity $\mathcal{U}$ on $X$ defines a topology $top(\mathcal{U})$ on $X$: a subset $A \subseteq X$ is open iff for each $a \in A$ there exists a covering $P \in \mathcal{U}$ such that $\St(a,P) \subseteq A$. 

In terms of entourages (where $\Ucal$ is defined by entourages):  
for each $a \in A$ there exists an entourage (binary relation on $X$) $\eps \in \mathcal{U}$ such that $\eps(a):=\{x \in X: (a,x) \in \eps\} \subseteq A$. 

Below we consider mainly \textit{Hausdorff uniform structures}. This is the case when the uniform covers separate the points. That is, for every $x \neq y$ in $X$ there exists $\alpha \in \mathcal{U}$ such that $y \notin \St(x,\alpha)$. 

Recall that a covering uniform space $(X,\mathcal{U})$ is called \emph{precompact} (or, \textit{totally bounded}) if
$\mathcal{U}$ has a base consisting of finite covers of $X$. If the uniformity is Hausdorff, then it is equivalent to say that the uniform completion is compact. 

For median algebras $(X,m)$ we say that a uniformity $\mathcal U$ is \textit{convex} if there exists a uniform base $\mathcal B$ of $\mathcal U$ where every member $A \in \alpha$ of every $\alpha \in \mathcal B$ is convex (in usual median sense). 

\begin{defin}
	Let \(X\) be a topological median algebra. A \textit{median compactification} of \(X\)
	is a pair \((K,\eta)\), where \(K\) is a compact Hausdorff topological median
	algebra and
	$
	\eta \colon X\to K
	$
	is a continuous median-preserving map with dense image.
	
	We say that the compactification is \textit{proper} if \(\eta\) is a topological
	embedding.
\end{defin}

Consider the natural partial order on isomorphism classes of proper median compactifications of a topological median algebra $(X,m,\t)$, namely, by factor maps: 
\[
(Y_1,\eta_1)\preceq (Y_2,\eta_2)
\quad\Longleftrightarrow\quad
\exists\, q \colon Y_2\to Y_1
\text{ continuous surjective with } q \circ \eta_2=\eta_1.
\]
Then $q$ is automatically median-preserving. 

We order compatible precompact median uniformities by fineness. 
To systematically generate median compactifications, the underlying uniform structure must interact properly with the median operation.

\begin{defin} \label{d:median-compatible} 
	A Hausdorff uniformity $\mathcal{U}$ on a median algebra $X$ is called a \textbf{median-compatible uniformity} if the ternary median operation $m \colon X^3 \to X$ is uniformly continuous (where $X^3$ carries the product uniformity $\mathcal{U} \times \mathcal{U} \times \mathcal{U}$).  
\end{defin}

A fundamental source of precompact median-compatible  uniformities comes directly from proper compactifications: 
if $Y$ is a compact topological median algebra, its unique topologically compatible uniformity $\mathcal{U}_Y$ automatically makes $m_Y$ uniformly continuous. Thus, the restriction of $\mathcal{U}_Y$ to any dense median subalgebra $X$ naturally yields a precompact median-compatible uniformity on $X$. 

\begin{prop} \label{p:Median-compactif} 
	Let $X=(X,m,\tau)$ be a Hausdorff topological median algebra. There is a canonical
	order-preserving bijection between:
	\begin{enumerate}
		\item isomorphism classes of proper median compactifications of $X$;
		\item Hausdorff precompact median-compatible uniformities $\mathcal U$ on $X$ such that
		$\operatorname{top}(\mathcal U)=\tau$.
	\end{enumerate} 
\end{prop}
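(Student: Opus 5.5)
We build the two maps explicitly and check that they are mutually inverse and monotone; the argument is the standard Samuel/Smirnov correspondence between precompact uniformities and compactifications, with the median operation carried along.

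\emph{From compactifications to uniformities.} Let $(K,\eta)$ be a proper median compactification. Let $\mathcal U_K$ be the unique uniformity of the compact Hausdorff space $K$, and put $\mathcal U_\eta:=\eta^{-1}(\mathcal U_K)$, the uniformity whose covers are $\{\eta^{-1}(A):A\in\alpha\}$ for $\alpha\in\mathcal U_K$.
\begin{itemize}
\item It is precompact, since $\mathcal U_K$ has a base of finite covers.
\item It is Hausdorff, since $\eta$ is injective and $\mathcal U_K$ is Hausdorff.
\item Its topology is $\tau$, because $\eta$ is a topological embedding and $\operatorname{top}(\mathcal U_K)$ is the topology of $K$.
\item It is median-compatible. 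The median $m_K$ is continuous on the compact space $K^3$, hence uniformly continuous for $\mathcal U_K\times\mathcal U_K\times\mathcal U_K$. Since $\eta$ is median-preserving, $m=\eta^{-1}\circ m_K\circ\eta^3$ on $X$, so $m$ is uniformly continuous.
\end{itemize}
If $(K,\eta)\cong(K',\eta')$ via a homeomorphism $h$ with $h\circ\eta=\eta'$, then $h$ is a uniform isomorphism, so $\mathcal U_\eta=\mathcal U_{\eta'}$. Thus the map on isomorphism classes is well defined.

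\emph{From uniformities to compactifications.} Let $\mathcal U$ be as in (2), and let $\widehat X$ be its Hausdorff uniform completion, which is compact by precompactness. The map $m\colon X^3\to X$ is uniformly continuous, and $\widehat{X^3}=\widehat X^{\,3}$, so $m$ extends uniquely to a uniformly continuous $\widehat m\colon\widehat X^{\,3}\to\widehat X$.

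The main point is that $\widehat m$ is again a median. Each axiom (M1)--(M3) is an identity between two continuous maps $\widehat X^{\,k}\to\widehat X$ ($k\le 3$). Both maps are built by composing $\widehat m$ with coordinate maps, for example $(a,x,b,c)\mapsto\widehat m(\widehat m(a,x,b),x,c)$ (four variables for (M3)). Each identity holds on the dense subset $X^k$, and $\widehat X$ is Hausdorff, so it holds on all of $\widehat X^{\,k}$.

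The canonical map $\iota\colon X\to\widehat X$ is then median-preserving with dense image. It is a topological embedding because $\mathcal U$ is Hausdorff and $\operatorname{top}(\mathcal U)=\tau$.

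\emph{Inverse bijections.} Starting from $\mathcal U$, the uniformity induced on $X$ by the unique uniformity of $\widehat X$ is $\mathcal U$ itself, since the completion's uniformity restricts to $\mathcal U$. Conversely, starting from $(K,\eta)$, $K$ is complete and $\eta$ is a uniform embedding of $(X,\mathcal U_\eta)$ with dense image. By uniqueness of completions, $K\cong\widehat X$ by a homeomorphism extending the identity on $X$. This homeomorphism is median-preserving, because the two medians agree on a dense set.

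\emph{Order-preservation.} Suppose $\mathcal U_1\subseteq\mathcal U_2$. Then $\mathrm{id}\colon(X,\mathcal U_2)\to(X,\mathcal U_1)$ is uniformly continuous and extends to a continuous surjection $q\colon\widehat X_2\to\widehat X_1$ with $q\circ\iota_2=\iota_1$. Conversely, a factor map $q\colon Y_2\to Y_1$ is uniformly continuous on the compact space $Y_2$. Pulling back along $\eta_2$ and using $q\circ\eta_2=\eta_1$ gives $\mathcal U_{\eta_1}\subseteq\mathcal U_{\eta_2}$.

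Finally, any such $q$ is median-preserving. The maps $q\circ m_{Y_2}$ and $m_{Y_1}\circ q^3$ are continuous on $Y_2^{\,3}$, they agree on the dense set $\eta_2(X)^3$, and $Y_1$ is Hausdorff, so they agree everywhere.
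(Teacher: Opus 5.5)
Your proposal is correct and follows the same route as the paper. In one direction you complete $\mathcal U$ and extend the median uniquely, with the axioms passing to $\widehat X$ by density. In the other you pull back the unique uniformity of $K$ along $\eta$. Mutual inverseness, monotonicity, and median-preservation of factor maps then come from the standard uniformity--compactification correspondence together with continuity and density.

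The only difference is that you spell out details the paper leaves implicit. These include $\widehat{X^3}=\widehat X^{\,3}$, the four-variable form of (M3), and the pullback identity $\eta_2^{-1}(q^{-1}(\alpha))=\eta_1^{-1}(\alpha)$ for factor maps.
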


\begin{proof}
	Let $\mathcal U$ be a Hausdorff precompact median-compatible uniformity on $X$ with
	$\operatorname{top}(\mathcal U)=\tau$. Its Hausdorff uniform completion $\widehat X_{\mathcal U}$
	is compact. Since the median map $m \colon X^3\to X$ is uniformly continuous, it extends uniquely to
	a continuous map
	$\widehat m \colon \widehat X_{\mathcal U}^{\,3}\to \widehat X_{\mathcal U}.$
	The median identities pass to the completion by continuity and density.   
	Hence $\widehat X_{\mathcal U}$ is a compact Hausdorff tma, and the canonical
	map $X\to \widehat X_{\mathcal U}$ is a dense median embedding. Since $\operatorname{top}(\mathcal U)=\tau$, this embedding is topological. Thus we obtain a proper
	median compactification of $X$.
	
	Conversely, let $\eta:X\to K$ be a proper median compactification. The compact Hausdorff space
	$K$ carries its unique compatible uniformity $\mathcal U_K$. Since the median operation on $K$
	is continuous, it is uniformly continuous. Therefore the restriction of $\mathcal U_K$ to the dense
	subspace $\eta(X)$ induces, via $\eta$, a Hausdorff precompact median-compatible uniformity
	$\mathcal U_\eta$ on $X$. Since $\eta$ is a topological embedding, $\operatorname{top}(\mathcal U_\eta)=\tau$.
	
	These two constructions are inverse to each other by the standard compactification--uniformity
	correspondence. They also respect the orders: a factor map
	$q:K_2\to K_1$ with $q \circ \eta_2=\eta_1$ is equivalent to the inclusion
	$\mathcal U_{\eta_1}\subseteq \mathcal U_{\eta_2}$.
	Finally, such a factor map is automatically median-preserving, because it is median-preserving on
	the dense subalgebra $\eta_2(X)$ and both median operations are continuous.
\end{proof}

\begin{ex} \label{ex:MedianMetric}
	Let $(X, d)$ be a \textbf{median metric space} (such as a CAT(0) cube complex equipped with its $\ell_1$ metric, or an $\R$-tree). As shown in \cite[Corollary 2.15]{CDH}, the canonical median operation $m$ satisfies the Lipschitz inequality: 
	$ d(m(x_1, y_1, z_1), m(x_2, y_2, z_2)) \leq d(x_1, x_2) + d(y_1, y_2) + d(z_1, z_2)$
	Hence, the metric uniformity $\mathcal{U}_d$ is a median-compatible uniformity.  
\end{ex}

\section{Intrinsic uniformity and topology} 
\label{s:StarMedUn} 

In this section we construct a precompact convex uniformity
\(\mathcal U_{\mathrm m}\), relying entirely on the intrinsic algebraic
structure of the median operation. It induces an intrinsic,
possibly non-Hausdorff, uniformizable topology. 

We begin with the linearly ordered case. 
Let $(X,\leq)$ be a linearly ordered set. 
It carries the usual \textit{interval topology} $\lambda_{\leq}$, generated by the prebase of all open rays $(a,\infty)$ and $(-\infty,b)$. 
Recall the following recent characterization of the  uniformity for minimal proper order compactifications of a Linearly Ordered Topological Space (LOTS):

\begin{f} \label{f:DedekindUniformityInternal}  \cite[Fact 3.4]{Me-CircTop} 
	Let $(X,\leq,\lambda_{\leq})$ be a $\LOTS$. Let $\mathfrak{B}$ be the family of all finite ``star-covers'' of $X$, where for any finite chain $F = \{a_1 < a_2 < \dots < a_n\}$ in $X$, the star-cover $\mathcal{C}_F$ is defined as: 
	$$ \mathcal{C}_F := \{ (a_{i-1}, a_{i+1}) \mid i=1, \dots, n \},$$ 
	where $a_0=-\infty$ and $a_{n+1}=+\infty$. The uniformity $\mu_{\leq}$ generated by the base $\mathfrak{B}$ is precisely the Dedekind compactification uniformity. We call it the  \textbf{interval uniformity}.  	
\end{f}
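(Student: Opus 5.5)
We will prove Fact~\ref{f:DedekindUniformityInternal} in three steps. First we check that $\mathfrak B$ is a base of a uniformity. Next we check that it is precompact and compatible with $\lambda_{\leq}$. Finally we identify its completion with the Dedekind compactification by showing that its uniformity coincides with the one induced from that compactification.

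\emph{Step 1: $\mathfrak B$ is a base.} By the criterion recalled after Definition~\ref{d:cov-unif}, it suffices to find, for finite chains $F_1,F_2$, a finite chain $F_3$ with $\mathcal C_{F_3}^*\succ \mathcal C_{F_1}\wedge\mathcal C_{F_2}$. Put $F=F_1\cup F_2$. Enlarging a chain refines its star-cover, so $\mathcal C_F\succ \mathcal C_{F_1}\wedge\mathcal C_{F_2}$.

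Next, for $F=\{a_1<\dots<a_n\}$ we have
\[
\St\bigl((a_{i-1},a_{i+1}),\mathcal C_F\bigr)\subseteq (a_{i-3},a_{i+3}),
\]
with the obvious conventions at the ends. This is contained in a single member of $\mathcal C_{F'}$ whenever $F'\subseteq F$ keeps only every third point, together with $a_1$ and $a_n$. So we take $F'=F_1\cup F_2$ and refine it to some $F_3\supseteq F'$ in which every two consecutive points of $F'$ are separated by at least two points of $F_3$. This is possible when $X$ has enough points. Where consecutive points of $F'$ are adjacent in $X$, we instead use that the intervals $(a,b)$ are then empty and the star stays inside the original member. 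We must carry out this bookkeeping carefully, and it is the only fiddly part of the step.

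\emph{Step 2: precompactness and compatibility.} Precompactness is immediate, since every member of $\mathfrak B$ is finite. For compatibility, note that $\St(x,\mathcal C_F)$ is an open interval (or ray) around $x$. Hence the uniform topology is coarser than $\lambda_{\leq}$. Conversely, take a basic neighborhood $(a,b)\ni x$. With $F=\{a,x,b\}$ we get $\St(x,\mathcal C_F)=(a,b)$, and the rays are handled by $F=\{x,b\}$ or $F=\{a,x\}$. Hausdorffness follows similarly: for $x<y$ take $F=\{x,y\}$, so that $\St(x,\mathcal C_F)=(-\infty,y)$ does not contain $y$.

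\emph{Step 3: the completion is the Dedekind compactification.} This is the main point. Let $\delta X$ be the Dedekind compactification, a compact LOTS containing $X$ densely with the order extended, and let $\mathcal U_\delta$ be the trace of its unique uniformity. We show $\mu_{\leq}=\mathcal U_\delta$.

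The inclusion $\mu_{\leq}\subseteq \mathcal U_\delta$ is easy. Each $\mathcal C_F$ is the trace of the finite open cover $\{(a_{i-1},a_{i+1})_{\delta X}\}$ of $\delta X$, and every finite open cover of a compact space is uniform.

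For the converse we use compactness. Every finite open cover $\gamma$ of $\delta X$ is refined by a cover by finitely many order-open intervals of $\delta X$, and the endpoints of these intervals may lie in $\delta X\setminus X$. The main obstacle is to replace these endpoints by points of $X$. By a Lebesgue-number type argument in the compact LOTS, we choose a finite chain $F\subseteq X$ whose star-cover refines the trace of $\gamma$. We do this as follows. Every $p\in\delta X$ lies in some member of $\gamma$, so it has a neighborhood $(c,d)_{\delta X}$ with $c<p<d$ and $c,d\in X$, inside one member of $\gamma$. When $p$ is an endpoint or a gap-point, we use one-sided versions and rely on density of $X$ together with the Dedekind property. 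By compactness, finitely many such intervals cover $\delta X$, and we take $F$ to be the set of all their endpoints. It remains to check that each $(a_{i-1},a_{i+1})$ lies in one of these chosen intervals. This needs some care, since the chosen intervals may overlap only slightly. We therefore first pass to a star-refinement $\gamma'$ of $\gamma$ and run the argument for $\gamma'$; the extra star absorbs the overlap. By Proposition~\ref{p:Median-compactif}, equality of the uniformities gives the identification of completions.
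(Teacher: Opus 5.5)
The paper gives no proof of this statement; it is imported from \cite[Fact~3.4]{Me-CircTop}. Your argument therefore has to stand on its own, and its core is sound: Step~3 is the real content, and both inclusions there are correct. In fact Step~3 makes Steps~1 and~2 superfluous. Once $\mathfrak B\subseteq\mathcal U_\delta$ and every finite open cover of $\delta X$ has a trace refined by some $\mathcal C_F$, the family $\mathfrak B$ is by definition a base of the uniformity $\mathcal U_\delta$. Its topology is $\lambda_\le$ because $X\hookrightarrow\delta X$ is a topological embedding, a standard property of the Dedekind completion.

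This matters because Step~1 as written is wrong. The bound $\St((a_{i-1},a_{i+1}),\mathcal C_F)\subseteq(a_{i-3},a_{i+3})$ is too weak for ``keep every third point.'' With $F'=\{a_1,a_4,a_7,\dots\}$, the set $(a_2,a_8)$ contains $a_4$ and $a_7$, so it lies in no member of $\mathcal C_{F'}$. You need the sharp bound $(a_{i-2},a_{i+2})$, which holds because members $(a_{k-1},a_{k+1})$ with $|k-i|\ge 2$ miss $(a_{i-1},a_{i+1})$. You must also handle the case where exactly one point $c\in X$ lies between consecutive points of $F'$: add $c$, and the adjacent open intervals are then empty.

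In Step~3 you locate the difficulty in the wrong place. At gap points and at added endpoints, density of $X$ alone gives interval endpoints in $X$ on both sides. The delicate case is a jump $r<p$ of $\delta X$ with $(r,p)_{\delta X}=\emptyset$. There the only admissible left endpoint is $r$, and you need $r\in X$. This is exactly where minimality of the Dedekind completion enters. Every point of $\delta X\setminus X$ is the supremum of the points of $X$ below it and the infimum of those above it, so it has no immediate neighbour; hence every jump of $\delta X$ is a jump of $X$. For the split compactification $K_\alpha$ of $\mathbb Q\cap[0,1]$ discussed in the paper this fails, and so does the conclusion. So ``a compact LOTS containing $X$ densely with the order extended'' does not suffice as your description of $\delta X$.

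Finally, the star-refinement $\gamma'$ is unnecessary. Suppose $F$ contains all finite endpoints of the chosen intervals $I_1,\dots,I_k$. Then any $I_j\ni a_i$ has its endpoints in $F\cup\{\pm\infty\}$ on either side of $a_i$. These endpoints are therefore $\le a_{i-1}$ and $\ge a_{i+1}$, so $(a_{i-1},a_{i+1})\subseteq I_j$ directly.
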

	
	\begin{lem} \label{l:LOTSSubbase}
		The interval uniformity $\mu_{\leq}$ on a $\LOTS$ $(Y,\leq)$ admits a uniform subbase consisting of all two-element covers of the form $\mathcal{O}_{u,v} = \{ L_v, R_u \}$, where $u < v$ and $L_v:=(-\infty,v), R_u:=(u,\infty)$.
	\end{lem}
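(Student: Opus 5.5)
The plan is to show two inclusions. First, every two-element cover $\mathcal O_{u,v}$ belongs to $\mu_{\leq}$. Second, every base cover $\mathcal C_F$ from Fact~\ref{f:DedekindUniformityInternal} is refined by a finite meet $\mathcal O_{u_1,v_1}\wedge\cdots\wedge\mathcal O_{u_k,v_k}$. Together these say exactly that $\Gamma:=\{\mathcal O_{u,v}: u<v\}$ is a subbase of $\mu_{\leq}$, i.e.\ $\Gamma^{\wedge}$ is a base.

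For the first inclusion, fix $u<v$ and take the finite chain $F=\{u<v\}$. Then
\[
\mathcal C_F=\{(-\infty,v),\ (u,+\infty)\}=\{L_v,R_u\}=\mathcal O_{u,v},
\]
because with $a_0=-\infty$, $a_1=u$, $a_2=v$, $a_3=+\infty$ we get $(a_0,a_2)=L_v$ and $(a_1,a_3)=R_u$. Hence $\mathcal O_{u,v}\in\mathfrak B\subseteq\mu_{\leq}$. Property (C1) then puts all finite meets in $\mu_{\leq}$, so $\Gamma^{\wedge}\subseteq\mu_{\leq}$.

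For the second inclusion, fix a chain $F=\{a_1<\dots<a_n\}$. The case $n=1$ is trivial, since $\mathcal C_F=\{X\}$. For $n\ge2$ consider
\[
\gamma:=\bigwedge_{i=1}^{n-1}\mathcal O_{a_i,a_{i+1}}.
\]
A member of $\gamma$ has the form $\bigcap_{i} E_i$ with $E_i\in\{L_{a_{i+1}},R_{a_i}\}$. Take a nonempty such intersection and a point $x$ in it.
\begin{itemize}
\item If every $E_i=R_{a_i}$, the set is $R_{a_{n-1}}=(a_{n-1},+\infty)$, which is the last member of $\mathcal C_F$.
\item If every $E_i=L_{a_{i+1}}$, the set is $L_{a_2}=(-\infty,a_2)$, which is the first member of $\mathcal C_F$.
\item Otherwise let $j$ be the least index with $E_j=L_{a_{j+1}}$. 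If $j\ge2$, the set is contained in $R_{a_{j-1}}\cap L_{a_{j+1}}=(a_{j-1},a_{j+1})\in\mathcal C_F$. If $j=1$, the set lies in $L_{a_2}\in\mathcal C_F$.
\end{itemize}
So $\gamma\succ\mathcal C_F$. Since $\mathfrak B$ is a base of $\mu_{\leq}$, every cover in $\mu_{\leq}$ is refined by some member of $\Gamma^{\wedge}$.

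The only point needing care is the bookkeeping of the mixed intersections and the case $j=1$. Empty intersections are harmless: they are either discarded or trivially contained in any member, depending on the cover convention. I expect no real obstacle.
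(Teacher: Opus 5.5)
Your proof is correct and follows essentially the same route as the paper. Like the paper, you identify $\mathcal O_{u,v}$ with the star-cover $\mathcal C_{\{u<v\}}$ and use the wedge $\bigwedge_{i=1}^{n-1}\mathcal O_{a_i,a_{i+1}}$ of consecutive two-ray covers to handle $\mathcal C_F$. The paper asserts that this wedge coincides with $\mathcal C_F$ (after discarding empty sets), whereas you only verify the refinement $\gamma\succ\mathcal C_F$ using the least index $j$ with $E_j=L_{a_{j+1}}$; that refinement is all the subbase property requires.
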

	\begin{proof}
		Let $F = \{a_1 < a_2 < \dots < a_n\}$ be a finite chain in $Y$. The corresponding basic cover in $\mu_{\leq}$ is $\mathcal{C}_F = \{ (a_{i-1}, a_{i+1}) \mid i=1, \dots, n \}$, where $a_0 = -\infty$ and $a_{n+1} = +\infty$.
		For each adjacent pair $a_i, a_{i+1}$ in $F$, consider the two-element cover $\mathcal{O}_{a_i, a_{i+1}} = \{ L_{a_{i+1}}, R_{a_i} \}$. 
		We form the common refinement (wedge intersection) of these $n-1$ covers: $\mathcal{W} = \bigwedge_{i=1}^{n-1} \mathcal{O}_{a_i, a_{i+1}}$. 
		
		An element of $\mathcal{W}$ is an intersection $\bigcap_{i=1}^{n-1} P_i$, where each $P_i \in \{L_{a_{i+1}}, R_{a_i}\}$. Because $L_{a_{i+1}} \cap R_{a_{i+1}} = (-\infty, a_{i+1}) \cap (a_{i+1}, +\infty) = \emptyset$, any sequence of choices containing $L_{a_{i+1}}$ followed by $R_{a_{i+1}}$ results in an empty intersection. Thus, the only non-empty elements of $\mathcal{W}$ arise from choosing a sequence of right rays $R_{a_j}$ up to some index $k-1$, and left rays $L_{a_m}$ for all subsequent indices. The intersection of such a sequence is $R_{a_{k-1}} \cap L_{a_{k+1}} = (a_{k-1}, +\infty) \cap (-\infty, a_{k+1}) = (a_{k-1}, a_{k+1})$. 
		
		This demonstrates that the non-empty elements of $\mathcal{W}$ are the sets $(a_{i-1}, a_{i+1})$. Therefore, the finite wedge intersection of the two-element covers yields the basic cover $\mathcal{C}_F$. 
		Conversely, every two-ray cover \(\mathcal O_{u,v}\) belongs to \(\mu_{\leq}\), since it is the star-cover associated with the two-point chain \(\{u<v\}\).
		 Consequently, by Fact \ref{f:DedekindUniformityInternal}, the family of all covers $\mathcal{O}_{x,y}$ forms a uniform subbase for $\mu_{\leq}$.
	\end{proof}
	
	\subsection*{Connection to the shadow (branch) topology}
	
	Let $X$ be a \textit{pretree} in the sense of \cite{B} with its betweenness ternary relation. For every pair $a,b \in X$ denote by $[a,b]$ the corresponding interval, consisting of all points $x \in X$ which are between $a$ and $b$.  
	
	Following A.V. Malyutin \cite{Mal14} (which in turn follows the terminology of P. de la Harpe and J.-P. Preaux), we define the so-called \textit{shadow topology}. Alternative names in related structures are: \textit{Lawson's topology} and \textit{observer's topology}. See the related discussion in \cite{Mal14}.  
	Given an ordered pair $(u,v) \in X^2$ with $u \neq v$, let 
	$$
	S^v_u := \{x \in X \mid u \in [x,v]\}
	$$
	be the \textit{shadow} in $X$ defined by the ordered pair $(u,v)$. 
	Pictorially, the shadow $S^v_u$ is cast by a point $u$ when the light source is located at the point $v$. 
	The family $\mathcal{S} = \{S^v_u \mid u,v \in X, u \neq v\}$ is a subbase for the closed sets of the topology $\tau_s$. 
	The complement of $S^v_u$ is said to be a \textit{branch}:
	$$B^v_u := X \setminus S^v_u = \{x \in X \mid u \notin [x,v]\}.$$
	The set of all branches $\{B^v_u \mid u, v \in X, u \neq v\}$ is a subbase of the shadow topology on the pretree $X$. 
	
	\sk   
	In the case of a linearly ordered set, $B^v_u=(u,\infty)$ for $u<v$ and $B^v_u=(-\infty,u)$ for $v<u$. Hence, we get the \textit{interval topology}. 
	In general, for an abstract pretree, the shadow topology is often (but not always) Hausdorff. 
	Furthermore, by \cite[Theorem 7.3]{Mal14}, a pretree equipped with its shadow topology is Hausdorff if and only if, as a topological space, it can be embedded into a dendron. 
	
	We introduce the following \textit{median versions} of shadows and branches.  
	 
\begin{defin} \label{d:MedianBranch} 
Let $(X,m)$ be a median algebra. For every ordered pair $u,v \in X$ define: 
\begin{itemize}
	\item \textit{Shadow} $S^v_u := \{x \in X \mid u = m(u,x,v)\}= \{x \in X \mid u \in [x, v]\}=\phi_{u,v}^{-1}(\{u\})$. 
	\item \textit{Branch} $B^v_u = X \setminus S^v_u= \{ x \in X \mid m(u,x,v) \neq u \}=\{x \in X \mid u \notin [x, v]\}=\phi_{u,v}^{-1}([u,v]\setminus\{u\})$. 
\end{itemize} 	
\end{defin}
	 
Here $\phi_{u,v} \colon X \to [u,v]$ is the canonical median retraction $\phi_{u,v}(z) = m(u,z,v)$.  
So, for $u \neq v$ we obtain a two-member cover of $X$:
$$\mathcal B_{u,v}:=\{B^u_v, B^v_u\}=\mathcal B_{v,u}.$$    

Later we use the following equivalent descriptions. For 
\(p=\phi_{u,v}(x)=m(u,x,v)\), one has
\[
x\in B^u_v \quad \Longleftrightarrow \quad p\neq v,
\qquad
x\in B^v_u \quad \Longleftrightarrow \quad p\neq u.
\]
Equivalently,
\[
B^u_v=\phi_{u,v}^{-1}([u,v]\setminus\{v\}),\qquad
B^v_u=\phi_{u,v}^{-1}([u,v]\setminus\{u\}).
\]
In particular, if \([u,v]\) is a chain ordered by \(\leq_u\), then
\[
B^u_v=\phi_{u,v}^{-1}([u,v)),\qquad
B^v_u=\phi_{u,v}^{-1}((u,v]).
\]

	\begin{lem} \label{l:ConvexFibers}
		Let $X$ be a median algebra and let $[u,v] \subseteq X$ be a chain, where $u \neq v$.  Then the branch  
		$B^u_v$ is a halfspace (hence, convex) in $X$.   
	\end{lem}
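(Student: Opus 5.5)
The idea is to realize $B^u_v$ and its complement as preimages of complementary convex subsets of the chain $[u,v]$ under the median retraction $\phi_{u,v}$. By Definition~\ref{d:MedianBranch},
\[
B^u_v=\phi_{u,v}^{-1}([u,v]\setminus\{v\}),\qquad S^u_v=\phi_{u,v}^{-1}(\{v\}),
\]
so $\{B^u_v,S^u_v\}$ is a partition of $X$. Both sets are nonempty, since $u\in B^u_v$ (as $\phi_{u,v}(u)=u\neq v$) and $v\in S^u_v$. It therefore suffices to show that both sets are convex. By the characterization recalled in the preliminaries, preimages of convex sets under median homomorphisms are convex. So the task reduces to two checks: that $\phi_{u,v}\colon X\to[u,v]$ is a median homomorphism, and that $\{v\}$ and $[u,v]\setminus\{v\}$ are convex in the median algebra $[u,v]$.

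For the first point, I would verify $m(u,m(x,y,z),v)=m(m(u,x,v),m(u,y,v),m(u,z,v))$. This is the standard fact that the gate map onto an interval is a homomorphism. I would either cite it (e.g.\ \cite{Vel-book} or \cite{BowditchMedian}), or derive it from the five-point distributivity identity $m(m(x,y,z),u,v)=m(m(x,u,v),m(y,u,v),z)$, which is a consequence of (M1)–(M3), applied twice together with the fact that $\phi_{u,v}$ is idempotent. I expect this step to be the only real obstacle if one insists on deriving it from the axioms listed, so I would cite it as a known property of canonical retractions.

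For the second point, $\{v\}$ is convex because $[v,v]=\{v\}$. For $[u,v]\setminus\{v\}$, I use that $[u,v]$ is a chain. Its median is $m_{\le_u}$, with $u$ the least and $v$ the greatest element, so intervals inside $[u,v]$ are order intervals. If $a,b\in[u,v]$ with $a,b\neq v$, say $a\le_u b$, then every $c$ between $a$ and $b$ satisfies $c\le_u b<_u v$, hence $c\neq v$. Thus $[u,v]\setminus\{v\}=[u,v)$ is convex; this is exactly the description $B^u_v=\phi_{u,v}^{-1}([u,v))$ noted above. Here intervals taken in the subalgebra $[u,v]$ coincide with those in $X$, since $[u,v]$ is convex.

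Combining the two points, $B^u_v$ and $S^u_v=X\setminus B^u_v$ are nonempty, disjoint and convex and cover $X$. Hence $\{B^u_v,S^u_v\}$ is a wall and $B^u_v$ is a halfspace. The chain hypothesis is essential: without it, $[u,v]\setminus\{v\}$ need not be convex, as in a square $\{0,1\}^2$ with $v$ a corner.
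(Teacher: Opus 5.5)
Your proof is correct and follows the same route as the paper. Both write $B^u_v=\phi_{u,v}^{-1}([u,v))$ and $S^u_v=\phi_{u,v}^{-1}(\{v\})$ as preimages of complementary convex subsets of the chain $[u,v]$ under the median homomorphism $\phi_{u,v}$; your version is slightly more complete, since you also check that both sides are nonempty ($u\in B^u_v$, $v\in S^u_v$), which the definition of a wall requires and the paper leaves implicit.
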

	\begin{proof} 	
		By definition $B^u_v$ is the preimage $\phi_{u,v}^{-1}([u, v))$, and $[u, v) = \{w \in [u,v] \mid w \neq v\}$.  
		Since $[u,v]$ is a linearly ordered median algebra (a chain), any sub-interval that excludes an endpoint is convex. Therefore, $[u, v)$ is a convex subset of $[u,v]$. 
		Furthermore, the complement of $[u, v)$ in $[u,v]$ is the singleton $\{v\}$, which is also trivially convex.  
		Because the preimage of a convex set under a median homomorphism is always convex, both $B^u_v = \phi_{u,v}^{-1}([u, v))$ and its complement $X \setminus B^u_v = S^u_v=\phi_{u,v}^{-1}(\{v\})$ are convex sets in $X$. 
		Thus, $B^u_v$ is a halfspace.  
	\end{proof}
	
	\begin{remark} \label{r:chain_essential}
		The assumption in Lemma \ref{l:ConvexFibers}  that $[u,v]$ is a \textbf{chain}, is essential. For a general interval $[u,v]$ in a median algebra of rank $\geq 2$, the semi-open interval $[u,v) = [u,v] \setminus \{v\}$ might be non-convex (so its preimage $\phi_{u,v}^{-1}([u,v))$ would also fail to be a convex set in general.)
		
		To see this, consider the standard 2-hypercube $X = \{0,1\}^2$. Let $u = (0,0)$ and $v = (1,1)$. The interval $[u,v]$ is the entire space $X$, and $[u,v) = \{(0,0), (1,0), (0,1)\}$. If we choose the points $x = (1,0)$ and $y = (0,1)$, both clearly belong to $[u,v)$. However, $m(x, v, y) = m((1,0), (1,1), (0,1)) = (1,1) = v$. So $v \in [x,y]$, and the interval $[x,y]=X$ is not contained in $[u,v)$. Therefore $B^u_v=[u,v)$ is not convex in $X$.   
	\end{remark}
	
	There are two complementary ways to view the uniformity introduced below. 
	The first is purely algebraic and uses the two branch halfspaces determined by chain intervals.
	The second, proved in Theorem~\ref{t:InitialUniformity}, identifies the same uniformity as the
	initial uniformity generated by the canonical gate retractions onto chain intervals equipped with
	their interval uniformities using Fact \ref{f:DedekindUniformityInternal}. 
	
	\begin{defin} \label{d:StarMedianUniformity}
		Let \(X\) be a median algebra. Consider the family of two-member
		covers
		\[
		\mathcal B_{x,y}:=\{B^x_y,B^y_x\},
		\]
		where \([x,y]\) runs over all nontrivial chain intervals in \(X\).
		As shown in Theorem~\ref{t:InitialUniformity}, this family is a
		uniform subbase. The resulting covering uniformity is called the
		\textbf{median uniformity}, or \textbf{\(m\)-uniformity}, and is
		denoted by \(\mathcal U_m\). 
		The intrinsic \textbf{median topology}, or \textbf{\(m\)-topology},
		denoted by \(\tau_{\mathrm m}\), is the topology induced by
		\(\mathcal U_m\).
	\end{defin}
	
	The requirement that $[x,y]$ is a (nontrivial) chain is critical;  it ensures that the branches $B^x_y$ are convex. So, each member of the cover $\mathcal B_{x,y}$ is convex. Since intersection of convex subsets is convex, this guarantees that  $\mathcal{U}_{\mathrm{m}}$ contains a uniform base which consists of convex covers. 
	
	The topology $\tau_{\mathrm m}$ serves as the natural median analogue to the classical interval topology defined on abstract linearly ordered sets.

\begin{lem} \label{l:SubbasicStars} 
	Let $X$ be a median algebra and $[u,v]\subseteq X$ be a chain, where $u \neq v$. 
	For every $x\in X$, exactly one of the cases {\rm (1), (2), (3)} occurs. Moreover: 
	\begin{enumerate}
		\item $\phi_{u,v}(x)=u$. 
		In this case
		$x\in B^u_v \setminus B^v_u
		\ \text{and} \ 
		\St(x,\mathcal B_{u,v})=B^u_v.$
		
		\item $\phi_{u,v}(x)=v$. In this case
		$
		x\in B^v_u \setminus B^u_v
		\ \text{and} \
		\St(x,\mathcal B_{u,v})=B^v_u.
		$
		
		\item
		If \(\phi_{u,v}(x)\notin\{u,v\}\), then 
		$
		x \in B^u_v \cap B^v_u 
		\ \text{and}\
		\St(x,\mathcal B_{u,v})=X.
		$
		
		\item $\mathcal{B}_{u,v} = \{B^u_v, B^v_u\}$ is the disjoint cover if and only if $u,v$ is an adjacent pair in $X$. 
	\end{enumerate}
\end{lem}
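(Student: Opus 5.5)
We reduce everything to the value $p=\phi_{u,v}(x)=m(u,x,v)\in[u,v]$, using the descriptions recorded after Definition~\ref{d:MedianBranch}:
\[
x\in B^u_v \iff p\neq v,\qquad x\in B^v_u \iff p\neq u .
\]
The trichotomy comes from the fact that $p$ lies in $[u,v]$. Since $u\neq v$, exactly one of $p=u$, $p=v$, or $p\notin\{u,v\}$ holds. The chain hypothesis is not needed for this part.

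In case (1), $p=u\neq v$, so $x\in B^u_v$ and $x\notin B^v_u$. The only member of $\mathcal B_{u,v}$ containing $x$ is $B^u_v$, hence $\St(x,\mathcal B_{u,v})=B^u_v$. Case (2) is symmetric with the roles of $u$ and $v$ exchanged. In case (3), $p\neq u$ and $p\neq v$, so $x$ lies in both members. The star is then $B^u_v\cup B^v_u$, which equals $X$ because $\mathcal B_{u,v}$ is a cover.

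The cover property itself should be checked once: a point $x$ missing both branches would need $p=u$ and $p=v$, which is impossible when $u\neq v$.

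For (4), the two members are disjoint exactly when no $x$ falls into case (3), that is, when $\phi_{u,v}(x)\in\{u,v\}$ for every $x\in X$. Since $\phi_{u,v}$ is a retraction onto $[u,v]$ (it fixes every $z\in[u,v]$), its image is all of $[u,v]$. So the condition says precisely that $[u,v]=\{u,v\}$, which is adjacency of $u$ and $v$.

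In detail:
\begin{itemize}
\item If $u,v$ are adjacent, then $p\in[u,v]=\{u,v\}$ always, so case (3) never occurs and the cover is disjoint.
\item If $u,v$ are not adjacent, pick $z\in[u,v]\setminus\{u,v\}$. Then $\phi_{u,v}(z)=z$, so $z\in B^u_v\cap B^v_u$ and the cover is not disjoint.
\end{itemize}

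Nothing here is genuinely hard. The only point needing care is to use that $\phi_{u,v}$ is surjective onto $[u,v]$ (i.e., $m(u,z,v)=z$ for $z\in[u,v]$, which is the definition of the interval) in the reverse direction of (4). I would also note explicitly that the chain assumption is used only to match the setting of Lemma~\ref{l:ConvexFibers} and is not needed in this argument.
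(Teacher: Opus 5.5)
Your proposal is correct and follows essentially the same route as the paper: you reduce everything to the value $p=\phi_{u,v}(x)$, and in part (4) you use surjectivity of $\phi_{u,v}$ onto $[u,v]$, just as the paper does. Your side remark that the chain hypothesis plays no role in this particular lemma is also accurate.
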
 
\begin{proof}
	(1) If $\phi_{u,v}(x)=u$, then by definition $m(u,x,v)\neq v$, hence $x\in B^u_v$, while
	$m(u,x,v)=u$, so $x\notin B^v_u$. Therefore $x$ belongs only to the member $B^u_v$ of the
	two-element cover $\mathcal B_{u,v}$, and thus
	$
	\St(x,\mathcal B_{u,v})=B^u_v.
	$ 
	
	(2) The case $\phi_{u,v}(x)=v$ is symmetric. 
	
	(3) If $\phi_{u,v}(x)\notin\{u,v\}$, then $m(u,x,v)\neq u$ and $m(u,x,v)\neq v$, so
	$
	x\in B^u_v \cap B^v_u.
	$
	Hence $x$ belongs to both members of the cover $\mathcal B_{u,v}$, and therefore
	$
	\St(x,\mathcal B_{u,v})=B^u_v \cup B^v_u=X.
	$
	
	(4) Since $\phi_{u,v}$ is onto $[u,v]$, the intersection
	$B^u_v\cap B^v_u$ is empty if and only if
	$[u,v]\setminus\{u,v\}$ is empty. This is equivalent to
	$[u,v]=\{u,v\}$, that is, to $u$ and $v$ being adjacent. 
\end{proof}

\begin{lem} \label{l:compositionOFgates} 
	Let $X$ be a median algebra and $x,y,u,v \in X$. Consider the canonical gate retractions 
	$$\phi_{x,y} \colon X \to [x,y], \ \phi_{u,v} \colon X \to [u,v].$$ 
	Denote $c:=\phi_{u,v}(x), d:=\phi_{u,v}(y)$. Then:
	\begin{enumerate}
		\item The composition $\phi_{u,v} \circ \phi_{x,y}$ coincides with the canonical retraction $\phi_{c,d} \colon X \to [c,d]$.  
		
		In particular, if \([x,y]\subseteq[u,v]\), then
		\(c=x\), \(d=y\), and
		\(
		\phi_{u,v}\circ\phi_{x,y}=\phi_{x,y}.
		\) 
		
\item If, in addition,
$\phi_{x,y}(u)=x$ and $\phi_{x,y}(v)=y$, then
\[
\phi_{u,v}|_{[x,y]}\colon [x,y]\to[c,d]
\quad\text{and}\quad
\phi_{x,y}|_{[c,d]}\colon[c,d]\to[x,y]
\]
are mutually inverse median isomorphisms. In particular,
$\phi_{x,y}(c)=x$ and $\phi_{x,y}(d)=y$.  
\end{enumerate}  		
\end{lem}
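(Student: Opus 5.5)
The plan is to reduce everything to two standard facts about gate retractions in median algebras (see \cite{Vel-book,BowditchMedian}). First, each $\phi_{u,v}(z)=m(u,z,v)$ is a median homomorphism $X\to[u,v]$. Second, the \emph{gate property}: $p=\phi_{u,v}(z)$ lies in $[z,w]$ for every $w\in[u,v]$. A consequence of the gate property is the \emph{nested gate property}: if $[c,d]\subseteq[u,v]$, then $\phi_{c,d}\circ\phi_{u,v}=\phi_{c,d}$. This holds because the gate of $\phi_{u,v}(z)$ in the convex set $[c,d]$ is also the gate of $z$, since $\phi_{u,v}(z)$ lies between $z$ and every point of $[c,d]$.

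For (1), fix $z\in X$. Applying the homomorphism $\phi_{u,v}$ to $\phi_{x,y}(z)=m(x,z,y)$ gives
\[
\phi_{u,v}(\phi_{x,y}(z))=m\bigl(c,\phi_{u,v}(z),d\bigr)=\phi_{c,d}\bigl(\phi_{u,v}(z)\bigr).
\]
Since $c,d\in[u,v]$ and intervals are convex, $[c,d]\subseteq[u,v]$. The nested gate property then gives $\phi_{c,d}\circ\phi_{u,v}=\phi_{c,d}$, hence $\phi_{u,v}\circ\phi_{x,y}=\phi_{c,d}$. For the special case $[x,y]\subseteq[u,v]$, we have $c=m(u,x,v)=x$ and likewise $d=y$, so the composition equals $\phi_{x,y}$.

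For (2), apply (1) with the roles of the pairs interchanged. This gives $\phi_{x,y}\circ\phi_{u,v}=\phi_{c',d'}$, where $c'=\phi_{x,y}(u)=x$ and $d'=\phi_{x,y}(v)=y$ by hypothesis, so $\phi_{x,y}\circ\phi_{u,v}=\phi_{x,y}$. Restricted to $[x,y]$, this composition is the identity. On the other side, $\phi_{u,v}\circ\phi_{x,y}=\phi_{c,d}$ restricts to the identity on $[c,d]$.

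It remains to check that the restrictions map into the right targets. The homomorphism $\phi_{u,v}$ sends $[x,y]$ into $[\phi_{u,v}(x),\phi_{u,v}(y)]=[c,d]$, because homomorphisms map intervals into intervals. Similarly, $\phi_{x,y}$ sends $[c,d]$ into $[\phi_{x,y}(c),\phi_{x,y}(d)]$. From $\phi_{x,y}\circ\phi_{u,v}=\phi_{x,y}$ we get $\phi_{x,y}(c)=\phi_{x,y}(x)=x$ and $\phi_{x,y}(d)=y$, so this target is $[x,y]$. The two restrictions are therefore mutually inverse median homomorphisms, that is, median isomorphisms.

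The main obstacle is the nested gate identity $\phi_{c,d}\circ\phi_{u,v}=\phi_{c,d}$. If it cannot simply be cited, I would derive it from the axioms (M1)–(M3) as follows. Set $p=m(u,z,v)$. The gate property gives $p\in[z,c]\cap[z,d]$. A short median calculation then yields $m(c,p,d)=m(c,z,d)$: both points lie in $[c,d]$, and each lies between $z$ and the other.
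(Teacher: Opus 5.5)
Your proof is correct. Part~(2) is essentially the paper's argument: you apply (1) in both directions, use the hypotheses to get $\phi_{x,y}\circ\phi_{u,v}=\phi_{x,y}$, check the targets, and note that each composite is the identity on the relevant interval.

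The difference is in part~(1). The paper quotes two results from Bowditch:
\begin{itemize}
\item a surjective median homomorphism maps intervals onto intervals, which gives $\phi_{u,v}([x,y])=[c,d]$;
\item the general gated-set formula $\omega_A\circ\omega_B=\omega_{\omega_A(B)}$ for composing gate retractions onto convex sets $A,B$.
\end{itemize}
You instead use only that $\phi_{u,v}$ is a median homomorphism, which yields the intertwining identity $\phi_{u,v}\circ\phi_{x,y}=\phi_{c,d}\circ\phi_{u,v}$. You combine it with the nested identity $\phi_{c,d}\circ\phi_{u,v}=\phi_{c,d}$ for $[c,d]\subseteq[u,v]$, which you derive from the gate property and convexity of intervals.

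What your route buys:
\begin{itemize}
\item It is more elementary and self-contained.
\item It recovers $\phi_{u,v}([x,y])=\phi_{c,d}(X)=[c,d]$ as a by-product rather than needing it as input.
\item It establishes independently the nested case $\phi_{z,y}\circ\phi_{x,y}=\phi_{z,y}$ for $[z,y]\subseteq[x,y]$. This is exactly the form in which the paper later invokes part~(1), in Lemmas~\ref{lem:branch_equiv} and~\ref{l:PreimageRays}.
\end{itemize}

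What the paper's route buys: it presents the lemma as the interval case of a statement about arbitrary gated convex sets. In part~(2) this includes the general mutual isomorphism between the projections $\omega_A(B)$ and $\omega_B(A)$. Your explicit median computation is specific to intervals and does not directly give that.
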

\begin{proof}  
	(1) Put $A=[u,v]$, $B=[x,y]$, and denote the corresponding
gate retractions by $\omega_A=\phi_{u,v}$ and
$\omega_B=\phi_{x,y}$. 
Recall that, by \cite[Lemma~7.1.5]{BowditchMedian}, every
surjective median homomorphism $f\colon M\to N$ maps intervals
onto intervals; that is,
\(
f([a,b]_M)=[f(a),f(b)]_N
\)
for all $a,b\in M$. Applying this to the surjective median
homomorphism $\omega_A\colon X\to A$, we obtain
\[
A_B:=\omega_A(B)
=\phi_{u,v}([x,y])
=[\phi_{u,v}(x),\phi_{u,v}(y)]_A
=[c,d]_A.
\]
Since $A=[u,v]$ is convex in $X$, one has
$[c,d]_A=[c,d]_X$. Hence $A_B=[c,d]$.

By \cite[Lemma~7.3.3]{BowditchMedian}, the composition of the
gate retractions onto $A$ and $B$ is the gate retraction onto
the projection $A_B$. Therefore
\(
\omega_A\circ\omega_B=\phi_{c,d},
\)
or equivalently,
\(
\phi_{u,v}\circ\phi_{x,y}=\phi_{c,d}.
\) 
If $[x,y]\subseteq[u,v]$, then $\phi_{u,v}$ fixes
$[x,y]$ pointwise. Hence $c=x$ and $d=y$, and therefore
\(
\phi_{u,v}\circ\phi_{x,y}=\phi_{x,y}.
\)

(2) Retain the notation $A_B=[c,d]$ from part~(1), and put
$B_A:=\omega_B(A)$.
Applying part~(1) in both directions, we have
	\(
	\omega_A\circ\omega_B=\phi_{A_B}
	\)
	and
	\(
	\omega_B\circ\omega_A=\phi_{B_A}.
	\)
	Since these retractions are the identity on their images, the
	restrictions
	\(
	\omega_A|_{B_A} \colon B_A\to A_B
	\)
	and
	\(
	\omega_B|_{A_B} \colon A_B\to B_A
	\)
	are mutually inverse median isomorphisms.

	By the assumptions,
	$
	B_A=[\phi_{x,y}(u),\phi_{x,y}(v)]=[x,y],
	$
	while \(A_B=[c,d]\). Therefore, the restrictions
	\[
	\omega_A|_{[x,y]}\colon [x,y]\to[c,d]
	\quad\text{and}\quad
	\omega_B|_{[c,d]}\colon[c,d]\to[x,y]
	\]
	are mutually inverse median isomorphisms. Since
	\(\omega_A(x)=c\) and \(\omega_A(y)=d\), the inverse relation gives
	$
	\omega_B(c)=x,\ \omega_B(d)=y.
	$
\end{proof}

\begin{lem} \label{lem:branch_equiv} 
Let \(X\) be a median algebra and let \([x,y]\) be a \textbf{chain} interval, equipped with the order \(\leq_x\). Then: 
	\begin{enumerate}
		\item If \(z\in [x,y)\), then
		\(
		B^x_y=B^z_y.
		\) 
		
		\item If \(z\in (x,y]\), then
		$
		B^y_x=B^z_x.
		$
	\end{enumerate}
	
	In particular, if \(z\in (x,y)\), then both equalities hold.
\end{lem}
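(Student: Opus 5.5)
We prove (1) by showing that the two shadows coincide, i.e.\ that for every $w\in X$
\[
m(x,w,y)=y \iff m(z,w,y)=y ,
\]
since $B^x_y=X\setminus\phi_{x,y}^{-1}(\{y\})$ and $B^z_y=X\setminus\phi_{z,y}^{-1}(\{y\})$. Part (2) then follows by exchanging the roles of $x$ and $y$ and using the reversed order $\leq_y$ on the same chain $[x,y]$. The final assertion is immediate, because $z\in(x,y)$ lies in both $[x,y)$ and $(x,y]$.

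The main tool is Lemma~\ref{l:compositionOFgates}(1), applied with $[u,v]:=[z,y]$ and the pair $x,y$. Since $z\in[x,y]$ and intervals are convex, $[z,y]\subseteq[x,y]$. We compute $c:=\phi_{z,y}(x)=m(z,x,y)$. Because $z\in[x,y]$ means $m(x,z,y)=z$, we get $c=z$, and clearly $d:=\phi_{z,y}(y)=y$. Lemma~\ref{l:compositionOFgates}(1) therefore yields
\[
\phi_{z,y}\circ\phi_{x,y}=\phi_{z,y}.
\]

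With this factorization both directions are short. For the forward implication: if $\phi_{x,y}(w)=y$, then $\phi_{z,y}(w)=\phi_{z,y}(y)=y$. For the converse, suppose $\phi_{z,y}(w)=y$ and put $p:=\phi_{x,y}(w)\in[x,y]$. Then $\phi_{z,y}(p)=y$, i.e.\ $y\in[z,p]$.

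Concluding $p=y$ from this is the only step that genuinely uses the chain hypothesis; recall Remark~\ref{r:chain_essential} for why it can fail in general. In the linear order $\leq_x$ on $[x,y]$ we have $z<y$, since $z\neq y$, and $p\leq y$, since $y$ is the maximum. So $y\in[z,p]$ forces $y\leq\max(z,p)=p$, hence $p=y$. Equivalently, $m(x,w,y)=y$, so $w\notin B^x_y$.

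This proves $S^x_y=S^z_y$, and hence $B^x_y=B^z_y$.
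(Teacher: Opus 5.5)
Your proof is correct and follows essentially the same route as the paper. Both arguments reduce to the equality of shadows $S^x_y=S^z_y$, use Lemma~\ref{l:compositionOFgates}(1) to get $\phi_{z,y}=\phi_{z,y}\circ\phi_{x,y}$, and then use the chain structure of $[x,y]$ to show that $\phi_{z,y}(p)=y$ forces $p=y$; the paper does this last step by a case split ($p\leq_x z$ versus $z<_x p$), while you use the order median. Your explicit computation $c=\phi_{z,y}(x)=z$, $d=y$ is a good addition: it justifies the composition in exactly the order needed, whereas the lemma's ``in particular'' clause, which the paper invokes, is stated for the opposite order of composition.
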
 
\begin{proof}
	We prove (1); the proof of (2) is symmetric. Assume that \(z\in[x,y)\).
	It is enough to show that \(S^x_y=S^z_y\), or equivalently, by
	Definition~\ref{d:MedianBranch}, that for every \(w\in X\),
	\[
	y\in[x,w]\quad \Longleftrightarrow\quad y\in[z,w].
	\]
	
	Let \(p=\phi_{x,y}(w)=m(x,w,y)\). Then \(p\in[x,y]\). By the
	definition of a median interval,
	\[
	y\in[x,w]
	\quad\Longleftrightarrow\quad
	m(x,w,y)=y.
	\]
	Since \(p=m(x,w,y)\), this gives
	$
	y\in[x,w]\quad\Longleftrightarrow\quad p=y.
	$
	
	Since \([z,y]\subseteq[x,y]\), the canonical retractions satisfy
	$
	\phi_{z,y}=\phi_{z,y}\circ\phi_{x,y}.
	$
	Indeed, this follows from Lemma~\ref{l:compositionOFgates}(1), applied to the
	nested intervals \([z,y]\subseteq[x,y]\). Hence
	\[
	\phi_{z,y}(w)=\phi_{z,y}(\phi_{x,y}(w))=\phi_{z,y}(p).
	\]
	
	Since \([x,y]\) is a chain and \(z<_x y\), the retraction \(\phi_{z,y}\)
	sends a point \(p\in[x,y]\) to \(y\) only when \(p=y\). Indeed, if
	\(p\leq_x z\), then \(\phi_{z,y}(p)=z\), while if \(z<_x p<_x y\), then
	\(\phi_{z,y}(p)=p\). Therefore
	\[
	\phi_{z,y}(p)=y \quad \Longleftrightarrow \quad p=y.
	\]
	
	Hence,
	\(
	y\in[z,w]
	\quad \Longleftrightarrow \ \
	\phi_{z,y}(w)=y
	\quad \Longleftrightarrow \ \ 
	\phi_{z,y}(p)=y
	\quad \Longleftrightarrow \ \ 
	p=y
	\quad \Longleftrightarrow \ \ 
	y\in[x,w].
	\) 
\end{proof}

\begin{ex}[Importance of \textbf{chain} intervals] \label{ex:ChainEssentiality} 
	We show that the equality \(B^x_y=B^z_y\) from
	Lemma~\ref{lem:branch_equiv} may fail if the interval \([x,y]\) is not a
	chain.  
	Consider the \(3\)-cube \(X=\{0,1\}^3\) with the coordinatewise median. Let
	\[
	x=(0,0,0),\qquad y=(1,1,0),\qquad z=(1,0,0).
	\]
	Then \(z\in[x,y]\), but \([x,y]\) is the square
	$
	[x,y]=\{0,1\}\times\{0,1\}\times\{0\},
	$
	and therefore is not a chain. 	 
	Let us compute the relevant shadows and branches. By Definition~\ref{d:MedianBranch}, 
	\[
	S^x_y=\{t\in X: y \in [t,x]\},\qquad B^x_y=X \setminus S^x_y.
	\]
	For the pair \(x,y\), one has
	$
	S^x_y=\{(1,1,0),(1,1,1)\},
	$
	and hence
	\[
	B^x_y
	=
	X\setminus S^x_y
	=
	\{(0,0,0),(1,0,0),(0,1,0),(0,0,1),(1,0,1),(0,1,1)\}.
	\]
	For the pair \(z,y\), we have
	$
	S^z_y
	=
	\{(0,1,0),(1,1,0),(0,1,1),(1,1,1)\}, 
	$
	and therefore
	\[
	B^z_y
	=
	X\setminus S^z_y
	=
	\{(0,0,0),(1,0,0),(0,0,1),(1,0,1)\}  \neq B^x_y.
	\]  
			\begin{figure}[h]  
		\begin{center}   
			\scalebox{0.18}{\includegraphics{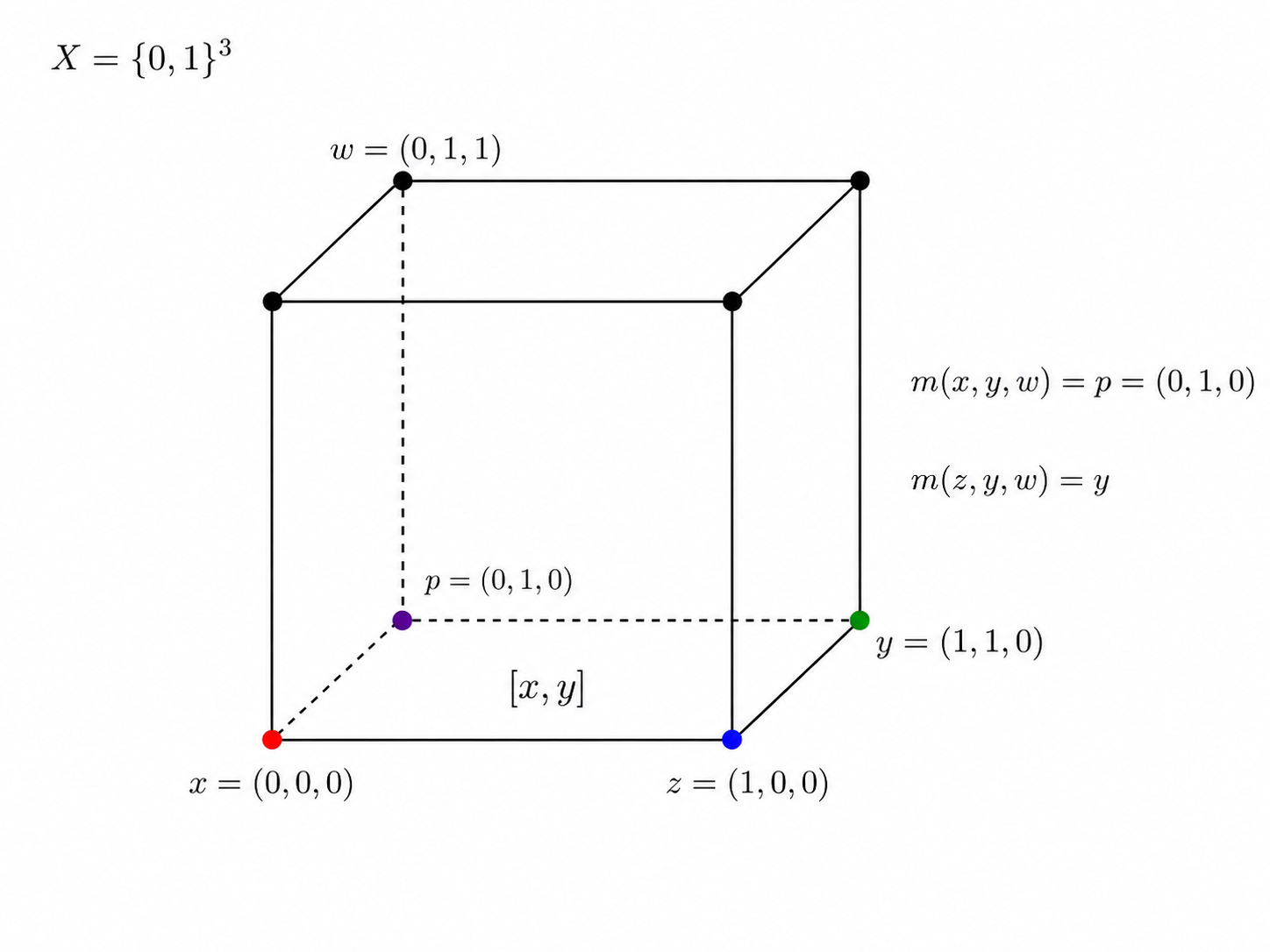}}
			\caption{Importance of chain intervals}
		\end{center} 
		\label{fig:first}
	\end{figure} 
	\end{ex}

\begin{lem}\label{l:PreimageRays}
	Let $X$ be a median algebra and let $[u,v]$ be a chain
	interval in $X$, ordered by $\leq_u$. Let
	\(
	\phi_{u,v}\colon X\to [u,v],\ 
	\phi_{u,v}(x)=m(u,x,v),
	\) 
	be the canonical retraction. Let
	\(
	u\leq_u c<_u d\leq_u v,
	\) 
	and put
	\(
	L_d:=\{t\in[u,v]:t<_u d\},
	\
	R_c:=\{t\in[u,v]:c<_u t\}.
	\)
	Then
	\begin{enumerate}
		\item $\phi_{u,v}^{-1}(L_d)=B^u_d$;
		\item $\phi_{u,v}^{-1}(R_c)=B^v_c$.
	\end{enumerate}
	Consequently,
	\(
	\phi_{u,v}^{-1}\bigl(\{L_d,R_c\}\bigr)
	=
	\{B^u_d,B^v_c\}.
	\)  
\end{lem}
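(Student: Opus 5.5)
We prove (1); statement (2) follows by exchanging the roles of $u$ and $v$, since $\leq_v$ is the opposite order. The consequence is then immediate, since $\phi_{u,v}^{-1}$ applied to the two-member family gives the two preimages.

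For (1), fix $w\in X$ and put $p:=\phi_{u,v}(w)=m(u,w,v)\in[u,v]$. By Definition~\ref{d:MedianBranch},
\[
w\in B^u_d\iff \phi_{u,d}(w)\neq d .
\]
Since $[u,d]\subseteq[u,v]$ (as $d\in[u,v]$ and intervals are convex), Lemma~\ref{l:compositionOFgates}(1) gives $\phi_{u,d}=\phi_{u,d}\circ\phi_{u,v}$. Hence $\phi_{u,d}(w)=\phi_{u,d}(p)$, and the problem reduces to a computation inside the chain $[u,v]$.

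For $p$ in the chain $[u,v]$ with order $\leq_u$, the retraction $\phi_{u,d}(p)=m(u,p,d)$ is the order median of $u\leq_u p$ and $d$. Thus $\phi_{u,d}(p)=p$ if $p\leq_u d$, and $\phi_{u,d}(p)=d$ if $d\leq_u p$. It follows that $\phi_{u,d}(p)=d$ if and only if $d\leq_u p$, that is, if and only if $p\notin L_d$. Therefore $w\in B^u_d\iff p\in L_d\iff w\in\phi_{u,v}^{-1}(L_d)$, which proves (1).

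The case $d=v$ is consistent with this: $L_v=[u,v)$, and the identity recovers the description $B^u_v=\phi_{u,v}^{-1}([u,v))$ noted after Definition~\ref{d:MedianBranch}. Note also that $[u,d]$ need not be checked to be a chain: the computation only uses $p\in[u,v]$, where the median is the order median.

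The main subtlety is justifying that the median of $X$, restricted to $[u,v]$, is the order median for $\leq_u$. This holds because $[u,v]$ is a chain and, as recalled in the preliminaries, its linear order must be $\leq_u$ or $\leq_v$. Combined with convexity of $[u,v]$, this makes $m(u,p,d)$ computable in the chain. Everything else is bookkeeping, and it parallels the proof of Lemma~\ref{lem:branch_equiv}.
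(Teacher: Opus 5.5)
Your proof is correct and follows essentially the same route as the paper: you reduce via the nested-retraction identity $\phi_{u,d}=\phi_{u,d}\circ\phi_{u,v}$ from Lemma~\ref{l:compositionOFgates}(1), compute $m(u,p,d)$ as the order median inside the chain $[u,v]$, and get (2) by symmetry. The paper also notes that $\{L_d,R_c\}$ covers $[u,v]$ because $c<_u d$, which is what makes the pulled-back family $\{B^u_d,B^v_c\}$ a cover of $X$; you may want to add that remark.
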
 
\begin{proof}
	We prove (1). Let \(a\in X\) and put
	$
	p=\phi_{u,v}(a)=m(u,a,v)\in [u,v].
	$
	Since \([u,d]\subseteq [u,v]\), the nested-retraction identity applying Lemma \ref{l:compositionOFgates}(1) gives
	$
	\phi_{u,d}(a)=\phi_{u,d}(\phi_{u,v}(a))=\phi_{u,d}(p).
	$ 
	
	On the chain \([u,v]\), the retraction \(\phi_{u,d}\) sends \(p\) to \(p\) if
	\(p\leq_u d\), and to \(d\) if \(d\leq_u p\). Hence
	\[
	p<_u d
	\quad\Longleftrightarrow\quad
	\phi_{u,d}(p)\neq d
	\quad\Longleftrightarrow\quad
	\phi_{u,d}(a)\neq d.
	\]
	By the definition of branches, the last condition is precisely \(a\in B^u_d\).
	Thus \(\phi_{u,v}^{-1}(L_d)=B^u_d\).
	
	(2) The proof for the right ray $R_c$ is symmetric.   
	Since $c<_u d$, the family $\{L_d,R_c\}$ covers $[u,v]$.
	Therefore, by (1) and (2),
	$
	\phi_{u,v}^{-1}\bigl(\{L_d,R_c\}\bigr)
	=
	\{B^u_d,B^v_c\}.
	$
\end{proof}

\begin{thm} \label{t:InitialUniformity}
	The intrinsic median uniformity $\mathcal{U}_m$ on $X$ coincides with the initial (weak)  uniformity defined by the family of all canonical gate retractions $\phi_{u,v} \colon X \to ([u,v], \mu_{\le})$ onto chain intervals. Consequently, the intrinsic median topology $\tau_{\mathrm m}$ coincides with the initial topology defined by these retractions onto the chain intervals equipped with their interval topologies $([u,v], \lambda_{\le})$.
\end{thm}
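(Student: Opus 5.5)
We compare two families of covers of $X$. The first is the family of two-member covers $\mathcal B_{u,v}=\{B^u_v,B^v_u\}$, with $[u,v]$ a nontrivial chain interval. The second is the family of preimages $\phi_{u,v}^{-1}(\gamma)$, with $\gamma\in\mu_{\le}$ on the chain $[u,v]$. The initial uniformity $\mathcal U_{in}$ of the retractions $\phi_{u,v}$ has as a subbase the preimages $\phi_{u,v}^{-1}(\gamma)$, where $\gamma$ runs over a subbase of $\mu_{\le}$ on $[u,v]$. By Lemma~\ref{l:LOTSSubbase} we may take $\gamma=\{L_d,R_c\}$ with $c<_u d$ in $[u,v]$. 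By Lemma~\ref{l:PreimageRays} its preimage is exactly $\{B^u_d,B^v_c\}$. So the whole statement reduces to showing that these two families generate the same filter of covers.

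\emph{Inclusion $\mathcal U_m\subseteq\mathcal U_{in}$.} For a nontrivial chain $[u,v]$, take $c=u$, $d=v$ in Lemma~\ref{l:PreimageRays}. This gives $\phi_{u,v}^{-1}(\{L_v,R_u\})=\{B^u_v,B^v_u\}=\mathcal B_{u,v}$. Each subbasic cover of $\mathcal U_m$ is therefore uniform for $\mathcal U_{in}$, and the inclusion follows by (C1) and (C2).

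\emph{Inclusion $\mathcal U_{in}\subseteq\mathcal U_m$.} Fix $c<_u d$ in the chain $[u,v]$. Then $[c,d]\subseteq[u,v]$ is a nontrivial chain interval: it is convex in $[u,v]$ and a subset of a chain. Hence $\mathcal B_{c,d}=\{B^c_d,B^d_c\}$ is a subbasic cover of $\mathcal U_m$. We claim $\{B^u_d,B^v_c\}=\{B^c_d,B^d_c\}$. For the first member, apply Lemma~\ref{lem:branch_equiv}(1) to the chain $[u,d]$ with $z=c\in[u,d)$. This gives $B^u_d=B^c_d$. For the second, apply Lemma~\ref{lem:branch_equiv}(2) to the chain $[c,v]$, ordered by $\le_c$, with $z=d\in(c,v]$. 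This gives $B^v_c=B^d_c$. The degenerate cases $c=u$ or $d=v$ are trivial. Thus every subbasic cover of $\mathcal U_{in}$ is literally a subbasic cover of $\mathcal U_m$.

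We still have to check that these chains are intervals with the right orders. $[u,d]$ and $[c,v]$ are subintervals of $[u,v]$ by convexity. On them, $\le_u$ restricts to $\le_u$, and $\le_c$ agrees with $\le_u$ above $c$.

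The two families therefore generate the same uniformity. This also settles the claim in Definition~\ref{d:StarMedianUniformity} that the $\mathcal B_{x,y}$ form a uniform subbase: the initial uniformity is a genuine uniformity, and its subbase family coincides with $\{\mathcal B_{x,y}\}$. The conclusion about topologies is automatic. The topology of an initial uniformity is the initial topology of the underlying topologies, and $\operatorname{top}(\mu_{\le})=\lambda_{\le}$ by Fact~\ref{f:DedekindUniformityInternal}.

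The main obstacle is purely formal. One has to justify that the initial uniformity is generated by preimages of \emph{subbasic} covers, and verify the order compatibility on the subchains used in the second inclusion; otherwise the proof is direct.
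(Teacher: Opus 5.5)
Your proof is correct and follows the paper's argument essentially step for step. The inclusion $\mathcal U_m\subseteq\mathcal U_{\mathrm{initial}}$ comes from Lemma~\ref{l:PreimageRays} with $c=u$, $d=v$. The reverse inclusion comes from identifying $\phi_{u,v}^{-1}(\mathcal O_{c,d})=\{B^u_d,B^v_c\}$ with $\mathcal B_{c,d}$ via Lemma~\ref{lem:branch_equiv} on the chains $[u,d]$ and $[c,v]$, and the topological statement follows from the standard initial-uniformity/initial-topology correspondence. Your extra checks are accurate refinements of points the paper leaves implicit: order compatibility on the subchains, and the observation that the two subbase families literally coincide, which justifies the subbase claim in Definition~\ref{d:StarMedianUniformity}.
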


\begin{proof}  
	Let \(\mathcal U_{\mathrm{initial}}\) denote the initial uniformity
	generated by the canonical retractions
	\[
	\phi_{u,v} \colon X\longrightarrow([u,v],\mu_{\leq}),
	\]  
	where \([u,v]\) runs over all nontrivial chain intervals of \(X\) and $\mu_{\leq}$ is the interval uniformity (Fact \ref{f:DedekindUniformityInternal}).
	We show that the family of branch covers
	\(
	\mathcal B_{x,y}=\{B^x_y,B^y_x\}
	\) 
	forms a uniform subbase for \(\mathcal U_{\mathrm{initial}}\).
	
	To see that $\mathcal{U}_m \subseteq \mathcal{U}_{initial}$, recall that $\mathcal{U}_m$ is generated by the subbasic covers $\mathcal{B}_{x,y} = \{B_y^x, B_x^y\}$ for all chain intervals $[x,y]$. By Lemma \ref{l:PreimageRays}, we have $B_y^x = \phi_{x,y}^{-1}(L_y)$ and $B_x^y = \phi_{x,y}^{-1}(R_x)$. By Lemma \ref{l:LOTSSubbase}, $\mathcal{O}_{x,y} = \{L_y, R_x\}$ is a subbasic uniform cover in $([x,y], \mu_\le)$. Therefore, its preimage $\mathcal{B}_{x,y}$ belongs to $\mathcal{U}_{initial}$ by definition, which yields $\mathcal{U}_m \subseteq \mathcal{U}_{initial}$.
	
	Conversely, $\mathcal{U}_{initial}$ is generated by the pullbacks of subbasic uniform covers of the target chain intervals. By Lemma \ref{l:LOTSSubbase}, a uniform subbase for $([u,v], \mu_\le)$ is given by covers of the form $\mathcal{O}_{c,d} = \{L_d, R_c\}$, where $u \le_u c <_u d \le_u v$. By Lemma \ref{l:PreimageRays}, their preimages under $\phi_{u,v}$ are exactly $\phi_{u,v}^{-1}(L_d) = B_d^u$ and $\phi_{u,v}^{-1}(R_c) = B_c^v$.

	Since \(c\in [u,d)\), Lemma~\ref{lem:branch_equiv} applied to the chain
	\([u,d]\) gives \(B^u_d=B^c_d\). Similarly, since \(d\in(c,v]\), the same lemma
	applied to the chain \([c,v]\) gives \(B^v_c=B^d_c\).
	Consequently, 
	\[
	\phi_{u,v}^{-1}(\mathcal O_{c,d})
	=
	\{B^u_d,B^v_c\}
	=
	\{B^c_d,B^d_c\}
	=
	\mathcal B_{c,d}.
	\] 
	 Since $[c,d]$ is a chain interval in $X$, this cover belongs to $\mathcal{U}_m$, proving that $\mathcal{U}_{initial} \subseteq \mathcal{U}_m$.
	
	Finally, it is a standard result in general topology that the uniform topology of an initial uniformity is exactly the initial topology defined by the same family of maps. Since the interval uniformity $\mu_\le$ induces the interval topology $\lambda_\le$, it follows immediately that $\tau_{\mathrm m}$ is the initial topology generated by the retractions $\phi_{u,v} \colon X \to ([u,v], \lambda_{\le})$. Alternatively, this follows also directly by Lemma \ref{l:PreimageRays}. 
\end{proof}

\begin{cor} \label{c:BranchesSubbaseTauM}
	Let \(X\) be a median algebra. Then 
	\begin{enumerate}
		\item The family of all intrinsic branches
		$B^u_v$, $B^v_u,$
		where \([u,v]\) runs over all nontrivial chain intervals, is an open subbase for
		the topology \(\tau_{\mathrm m}\). 
		\item \(\tau_{\mathrm m}\) is locally convex.
	\end{enumerate} 
\end{cor}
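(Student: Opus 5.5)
For part (1), I will read the open sets of $\tau_{\mathrm m}$ directly off the subbasic covers. By Definition~\ref{d:StarMedianUniformity} (and Theorem~\ref{t:InitialUniformity}), the finite wedges $\mathcal B_{u_1,v_1}\wedge\cdots\wedge\mathcal B_{u_n,v_n}$ of branch covers over chain intervals form a uniform base of $\mathcal U_{\mathrm m}$. For such a wedge $\alpha$ and a point $x$ one has
\[
\St(x,\alpha)=\bigcap_{i=1}^n \St(x,\mathcal B_{u_i,v_i}).
\]
The inclusion $\subseteq$ is clear. For $\supseteq$, given $y$ in the right-hand side, pick for each $i$ a member $P_i\in\mathcal B_{u_i,v_i}$ containing both $x$ and $y$. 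Then $\bigcap_i P_i$ is a member of $\alpha$ containing $x$ and $y$.

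Lemma~\ref{l:SubbasicStars} gives each $\St(x,\mathcal B_{u,v})$ explicitly: it is $B^u_v$, $B^v_u$, or $X$. So the stars of points in basic covers are finite intersections of branches, possibly empty intersections equal to $X$. A set $A$ is $\tau_{\mathrm m}$-open iff every point has such a star inside $A$. Hence every $\tau_{\mathrm m}$-open set is a union of finite intersections of branches.

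It remains to check that each branch $B^u_v$ is itself $\tau_{\mathrm m}$-open. There are two routes. The first is to note, via Theorem~\ref{t:InitialUniformity} and Lemma~\ref{l:PreimageRays}, that $B^u_v=\phi_{u,v}^{-1}(L_v)$, where $L_v$ is an open ray in $([u,v],\lambda_\le)$, and to use that $\tau_{\mathrm m}$ is the initial topology. The second is a direct argument: for $x\in B^u_v$, Lemma~\ref{l:SubbasicStars} shows $\phi_{u,v}(x)\neq v$, so either $\St(x,\mathcal B_{u,v})=B^u_v$ (when $\phi_{u,v}(x)=u$) or $x$ lies in the interior of the chain. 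In the latter case, put $c=\phi_{u,v}(x)$. Then $[c,v]$ is a nontrivial chain interval, and $\phi_{c,v}(x)=c$, since $\phi_{c,v}\circ\phi_{u,v}=\phi_{c,v}$ by Lemma~\ref{l:compositionOFgates}. Therefore $\St(x,\mathcal B_{c,v})=B^c_v$, and $B^c_v=B^u_v$ by Lemma~\ref{lem:branch_equiv}. This is the only step needing care, and I expect the initial-topology route to settle it at once. Together with the previous paragraph, this shows the branches form an open subbase.

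For part (2), Lemma~\ref{l:ConvexFibers} says each branch over a chain interval is a halfspace, hence convex. Intersections of convex sets are convex. So the basic neighborhoods $\St(x,\alpha)$ above, being finite intersections of branches (or $X$), are convex. They form a neighborhood base at $x$, so $\tau_{\mathrm m}$ is locally convex. Continuity of the median, if it is wanted as part of "topological median algebra", is not claimed here. It can be postponed, or one can note that $\phi_{u,v}$ is a homomorphism onto a chain whose interval topology makes the median continuous.
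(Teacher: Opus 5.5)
Your proof is correct, and part (2) is exactly the paper's argument. For part (1) you take a somewhat different route. The paper gets (1) in one line from Theorem~\ref{t:InitialUniformity} and Lemma~\ref{l:PreimageRays}. By that theorem, $\tau_{\mathrm m}$ is the initial topology of the retractions $\phi_{u,v}\colon X\to([u,v],\lambda_{\le})$ onto chain intervals. By that lemma, the pullbacks of the subbasic open rays $L_d,R_c$ are precisely branches over chain intervals, namely $B^u_d$ over $[u,d]$ and $B^v_c$ over $[c,v]$. This implicitly uses the general fact that the topology of an initial uniformity is the initial topology.

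You instead work directly from the definition of $\operatorname{top}(\mathcal U_{\mathrm m})$. The formula $\St(x,\alpha)=\bigcap_i\St(x,\mathcal B_{u_i,v_i})$ together with Lemma~\ref{l:SubbasicStars} shows that every open set is a union of finite intersections of branches. Your direct argument for openness of $B^u_v$ (pass to the subchain $[c,v]$ with $c=\phi_{u,v}(x)$, then apply Lemma~\ref{lem:branch_equiv}) avoids the initial-topology identification altogether. It needs only that finite wedges of the $\mathcal B_{u,v}$ form a uniform base.

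What each route buys: yours is more elementary and shows that the stars $\St(x,\alpha)$ are themselves open convex sets, which is exactly what (2) uses. The paper's route is shorter but leans on the general initial-uniformity machinery.

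One small precision. The identity $\phi_{c,v}\circ\phi_{u,v}=\phi_{c,v}$ comes from the general form of Lemma~\ref{l:compositionOFgates}(1) applied to $\phi_{c,v}\circ\phi_{u,v}$, using $\phi_{c,v}(u)=c$ and $\phi_{c,v}(v)=v$. It does not come from the ``in particular'' clause, which concerns the composition in the opposite order. The paper uses the lemma the same way in Lemma~\ref{lem:branch_equiv}.
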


\begin{proof} (1) Directly follows from Lemma \ref{l:PreimageRays} and Theorem  \ref{t:InitialUniformity}. 
	
	(2) 
	Every branch \(B^u_v\) is convex by Lemma~\ref{l:ConvexFibers}, and finite
	intersections of convex sets are convex. Therefore \(\tau_{\mathrm m}\) has a base
	consisting of convex open sets. 
\end{proof}

\begin{defin} \label{d:medT2}
	A median algebra $X$ is said to satisfy the \emph{median $T_2$ property}  if for every two distinct
	points $x,y\in X$ there exists a chain interval $[u,v]\subseteq X$ such that
	$
	\{\phi_{u,v}(x),\phi_{u,v}(y)\}=\{u,v\}.
	$
	
	We use the notation: $X \in (T^m_2)$. 
\end{defin}

\begin{lem} \label{l:medT2Hausdorff}
	For a median algebra $X$, the following are equivalent:
	\begin{enumerate}
		\item $\mathcal U_{\mathrm{m}}$ is Hausdorff.
		\item For every distinct points $x,y\in X$ there exists a subbasic cover
		$\mathcal B_{u,v}$ such that
		\(
		y\notin \St(x,\mathcal B_{u,v}).
		\)
		\item $X \in (T^m_2)$. 
	\end{enumerate}
\end{lem}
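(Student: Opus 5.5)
I will prove the cycle (1)$\Rightarrow$(2)$\Rightarrow$(3)$\Rightarrow$(1), using the description of stars of subbasic covers from Lemma~\ref{l:SubbasicStars}.

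For (2)$\Leftrightarrow$(3), fix distinct $x,y$ and a nontrivial chain interval $[u,v]$. By Lemma~\ref{l:SubbasicStars}, $\St(x,\mathcal B_{u,v})$ equals $B^u_v$ if $\phi_{u,v}(x)=u$, equals $B^v_u$ if $\phi_{u,v}(x)=v$, and equals $X$ otherwise. So $y\notin\St(x,\mathcal B_{u,v})$ forces $\phi_{u,v}(x)\in\{u,v\}$. If $\phi_{u,v}(x)=u$, then $y\notin B^u_v=\phi_{u,v}^{-1}([u,v)))$ means $\phi_{u,v}(y)=v$. The other case is symmetric. Hence $\{\phi_{u,v}(x),\phi_{u,v}(y)\}=\{u,v\}$. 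Conversely, if $\phi_{u,v}(x)=u$ and $\phi_{u,v}(y)=v$, the same lemma gives $\St(x,\mathcal B_{u,v})=B^u_v\not\ni y$. The case with $u$ and $v$ swapped is symmetric. Note that (3) makes $u\neq v$ automatically, since $x\ne y$ have distinct images.

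The implication (2)$\Rightarrow$(1) is immediate: $\mathcal B_{u,v}\in\mathcal U_{\mathrm m}$, and Hausdorffness is defined exactly as separation by some uniform cover.

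The only step needing care is (1)$\Rightarrow$(2). Here a separating cover $\alpha\in\mathcal U_{\mathrm m}$ need not be subbasic. By Theorem~\ref{t:InitialUniformity} and Definition~\ref{d:StarMedianUniformity}, $\alpha$ is refined by a finite wedge $\beta=\mathcal B_{u_1,v_1}\wedge\dots\wedge\mathcal B_{u_n,v_n}$. Refinement gives $\St(x,\beta)\subseteq\St(x,\alpha)$, so $y\notin\St(x,\beta)$.

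I then claim that some single factor already separates. Suppose instead that $y\in\St(x,\mathcal B_{u_i,v_i})$ for every $i$. Then for each $i$ there is a member $P_i\in\mathcal B_{u_i,v_i}$ containing both $x$ and $y$. The intersection $P_1\cap\dots\cap P_n$ is a member of $\beta$ containing both points, contradicting $y\notin\St(x,\beta)$. This uses the general identity $\St(x,\beta_1\wedge\beta_2)\supseteq$ (in fact equals) $\St(x,\beta_1)\cap\St(x,\beta_2)$, which holds for arbitrary covers. So some $\mathcal B_{u_i,v_i}$ works, and this gives (2).
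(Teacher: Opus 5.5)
Your proof is correct and takes essentially the same route as the paper: (2)$\Leftrightarrow$(3) via the star computation of Lemma~\ref{l:SubbasicStars}, and (1)$\Rightarrow$(2) by passing to a finite wedge $\beta$ of subbasic covers and using $\St(x,\beta)=\bigcap_i\St(x,\mathcal B_{u_i,v_i})$. One small slip: your remark that (3) makes $u\neq v$ automatic is false, because for $u=v$ one has $\{\phi_{u,u}(x),\phi_{u,u}(y)\}=\{u\}=\{u,v\}$ for all $x,y$; the definition of $(T^m_2)$ must instead be read with $[u,v]$ nontrivial (as the paper tacitly does, since otherwise every median algebra would satisfy it), and that reading is exactly what (3)$\Rightarrow$(2) needs.
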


\begin{proof}
	\emph{(2)$\Rightarrow$(1)} is immediate.
	
	\emph{(1)$\Rightarrow$(2)}:
	Let $x\neq y$ in $X$. Since $\mathcal U_{\mathrm{m}}$ is Hausdorff, there exists
	$\alpha\in\mathcal U_{\mathrm{m}}$ such that
	\(
	y\notin \St(x,\alpha).
	\) 
	Because $\mathcal U_{\mathrm{m}}$ is generated by the subbase of all covers
	$\mathcal B_{u,v}$, there exists a finite common refinement
	\[
	\beta=\mathcal B_{u_1,v_1}\wedge \cdots \wedge \mathcal B_{u_n,v_n}
	\]
	such that $\beta\succ \alpha$.
	Hence
	\( 
	\St(x,\beta)\subseteq \St(x,\alpha),
	\)
	so in particular $y\notin \St(x,\beta)$.
	
	For a finite wedge refinement, the distributive law for sets implies:
	$
	\St(x,\beta)=\cap_{i=1}^n \St(x,\mathcal B_{u_i,v_i}).
	$
	Since  
	$
	y\notin \bigcap_{i=1}^n \St(x,\mathcal B_{u_i,v_i}),
	$
	there exists some index $i$ such that  
	$y\notin \St(x,\mathcal B_{u_i,v_i}).$
	This proves (2).
	
	\emph{(2)$\Leftrightarrow$(3)}:
	By Lemma~\ref{l:SubbasicStars}, the condition
	$
	y\notin \St(x,\mathcal B_{u,v})
	$
	holds if and only if one of the two projections is $u$ and the other is $v$, i.e.,
	$
	\{\phi_{u,v}(x),\phi_{u,v}(y)\}=\{u,v\}.
	$
	This is exactly Definition~\ref{d:medT2}.
\end{proof}

\begin{cor} \label{l:TychTop} 
	$\tau_{\mathrm{m}}$ is Tychonoff (completely regular and Hausdorff) if and only if $X \in (T^m_2)$.  
\end{cor}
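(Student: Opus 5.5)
The plan is to reduce the statement to Lemma~\ref{l:medT2Hausdorff} together with the standard fact that a topology induced by a uniformity is completely regular. The key observation is that Tychonoff-ness of $\tau_{\mathrm m}=\operatorname{top}(\mathcal U_{\mathrm m})$ is equivalent to Hausdorffness of the uniformity $\mathcal U_{\mathrm m}$. By Lemma~\ref{l:medT2Hausdorff}, Hausdorffness of $\mathcal U_{\mathrm m}$ is in turn equivalent to $X\in(T^m_2)$.

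For the direction ($\Leftarrow$), assume $X\in(T^m_2)$. Lemma~\ref{l:medT2Hausdorff} gives that $\mathcal U_{\mathrm m}$ is a Hausdorff uniformity. Recall that every uniform topology is completely regular: for a point $x$ and an open neighborhood $\St(x,\alpha)$, a chain of star-refinements from (C3) produces a uniformly continuous function into $[0,1]$ separating $x$ from the complement, as in \cite{Eng89}. A Hausdorff uniformity induces a Hausdorff topology, since $y\notin\St(x,\alpha)$ together with a star-refinement $\beta$ of $\alpha$ gives the disjoint neighborhoods $\St(x,\beta)$ and $\St(y,\beta)$. Hence $\tau_{\mathrm m}$ is Tychonoff.

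For the direction ($\Rightarrow$), assume $\tau_{\mathrm m}$ is Hausdorff (in fact $T_1$ suffices), and let $x\neq y$. There is a $\tau_{\mathrm m}$-open set $U$ with $x\in U$ and $y\notin U$. By the definition of $\operatorname{top}(\mathcal U)$, there is $\alpha\in\mathcal U_{\mathrm m}$ with $\St(x,\alpha)\subseteq U$, so $y\notin\St(x,\alpha)$. Thus $\mathcal U_{\mathrm m}$ separates points, and Lemma~\ref{l:medT2Hausdorff} yields $X\in(T^m_2)$.

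There is no real obstacle here. The only point needing care is that the definition of a Hausdorff uniformity used in the paper (point separation by stars) matches the $T_1$/$T_2$ property of the induced topology. Both implications between them follow from the star-refinement axiom as sketched above, so the corollary is routine once Lemma~\ref{l:medT2Hausdorff} is available.
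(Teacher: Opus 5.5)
Your proposal is correct and is essentially the argument the paper relies on: the corollary is stated right after Lemma~\ref{l:medT2Hausdorff} with no separate proof, resting on that lemma together with the standard facts that every uniform topology is completely regular and that a uniformity separates points exactly when its induced topology is $T_1$ (equivalently Hausdorff). Your explicit star-refinement check of that last equivalence, in both directions, is sound.
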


We show in Section \ref{s:FiniteAndDiscrete}  that the m-uniformity is Hausdorff for important classes of median algebras: for finite-rank algebras (Theorem \ref{t:FiniteRankHausd}) and for algebraically discrete algebras (Theorem \ref{t:DISCRETEisCV}). 
In general, the intrinsic uniformity \(\mathcal U_m\) and the m-topology $\tau_{\mathrm m}$ need not be Hausdorff.
It may even be the trivial uniformity. This happens precisely when \(X\) has no nontrivial chain intervals.   

\begin{ex}[Failure of $(T_2^m)$: a ``Swiss cheese" algebra] \label{ex:PERIODIC} 
	A useful example is the algebra \(P\) of periodic binary bi-sequences;
	see \cite[p.~63]{BowditchMedian}. Namely, \(P\) is the subalgebra of the Cantor
	cube \(\{0,1\}^{\mathbb Z}\) consisting of all periodic bi-infinite binary
	sequences. If \(x\neq y\) in \(P\), then the interval \([x,y]_P\) is not a
	chain. Indeed, as observed by Bowditch, such intervals contain finite
	hypercubes of arbitrarily large dimension. 
	Hence \(P\) has no nontrivial chain intervals.  
	Therefore \(\mathcal U_m(P)\) is trivial.
\end{ex}

\begin{lem} \label{l:LeastToplogy} 
	Let $(X,m)$ be a median algebra and $\sigma$ be a Hausdorff topology on $X$ which makes the median $m \colon (X^3,\sigma^3) \to (X,\sigma)$ continuous. 
	Then: 
	\begin{enumerate}
		\item $\tau_{\mathrm{m}} \subseteq \sigma$.  	
		\item If, in addition, $(X,m) \in (T^m_2)$ and $\sigma$ is compact, then $\tau_{\mathrm{m}} = \sigma$. 
	\end{enumerate} 
\end{lem}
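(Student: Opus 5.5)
For (1), I would use Corollary~\ref{c:BranchesSubbaseTauM}(1): the branches $B^u_v, B^v_u$, with $[u,v]$ a nontrivial chain interval, form an open subbase for $\tau_{\mathrm m}$. So it suffices to show that each such branch is $\sigma$-open. By Lemma~\ref{l:PreimageRays}, $B^u_v=\phi_{u,v}^{-1}([u,v]\setminus\{v\})$, or equivalently its complement is the shadow $S^u_v=\phi_{u,v}^{-1}(\{v\})$. The map $\phi_{u,v}(z)=m(u,z,v)$ is $\sigma$-continuous, since it is the composition of $z\mapsto(u,z,v)$ with the continuous median. The singleton $\{v\}$ is $\sigma$-closed because $\sigma$ is Hausdorff. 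Hence $S^u_v$ is $\sigma$-closed and $B^u_v$ is $\sigma$-open; the same argument applies to $B^v_u$. This gives $\tau_{\mathrm m}\subseteq\sigma$. The argument uses only the algebraic description of the shadow as a fibre, so it does not need the topology induced on $[u,v]$ to agree with the interval topology.

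For (2), I would argue with the identity map $\mathrm{id}\colon(X,\sigma)\to(X,\tau_{\mathrm m})$. By (1) this map is continuous. Since $X\in(T^m_2)$, Corollary~\ref{l:TychTop} shows that $\tau_{\mathrm m}$ is Hausdorff. A continuous bijection from a compact space onto a Hausdorff space is a homeomorphism, because it maps closed (hence compact) subsets to compact (hence closed) subsets. Therefore $\sigma=\tau_{\mathrm m}$.

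The main care point is in (1): we must use that shadows are preimages of single points under continuous retractions. We should not try to compare the subspace topology on $[u,v]$ with its order topology, which need not coincide in general. Everything else is formal, and Hausdorffness of $\sigma$ enters exactly in making $\{v\}$ closed.
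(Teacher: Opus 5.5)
Your proposal is correct and follows essentially the same route as the paper: in (1) you show each subbasic branch is $\sigma$-open because its complement, a shadow, is the $\sigma$-closed fibre of the continuous retraction $\phi_{u,v}$ over a point (the paper uses $S^v_u=\phi_{u,v}^{-1}(\{u\})$, the symmetric case). In (2) you make the same argument as the paper, namely that the identity $(X,\sigma)\to(X,\tau_{\mathrm m})$ is a continuous bijection from a compact space onto a space that is Hausdorff by Corollary~\ref{l:TychTop}, hence a homeomorphism.
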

\begin{proof}
	(1) For every ordered pair \(u,v\in X\), we have
	$
	B^v_u
	=
	X\setminus S^v_u
	=
	X\setminus \phi_{u,v}^{-1}(\{u\}),
	$
	where \(\phi_{u,v}(x)=m(u,x,v)\). Since \(\sigma\) makes the median operation
	continuous, the map \(\phi_{u,v}\colon (X,\sigma)\to (X,\sigma)\) is continuous.
	As \(\sigma\) is Hausdorff, the singleton \(\{u\}\) is closed. Hence \(B^v_u\)
	is \(\sigma\)-open.
	
	Therefore all subbasic branches defining \(\tau_{\mathrm m}\) are \(\sigma\)-open. It follows that
	$
	\tau_{\mathrm m}\subseteq \sigma.
	$
	
	(2) Assume now that \(X\in (T^m_2)\) and that \(\sigma\) is compact. By
	Corollary~\ref{l:TychTop}, the space \((X,\tau_{\mathrm m})\) is Hausdorff. By part (1),
	the identity map
	$
	\operatorname{id} \colon (X,\sigma)\to (X,\tau_{\mathrm m})
	$
	is continuous. Since \((X,\sigma)\) is compact and \((X,\tau_{\mathrm m})\) is Hausdorff, this
	identity map is a homeomorphism. Hence \(\tau_{\mathrm m}=\sigma\).
\end{proof}

\begin{ex} \label{ex:LipschitzCompact} 
 There exists a compact metrizable locally convex Swiss-cheese median algebra.   
	Let
	\(
	K:=\operatorname{Lip}_1([0,1],[0,1])
	\) 
	be the space of all $1$-Lipschitz functions
	$f\colon[0,1]\to[0,1]$, equipped with the uniform topology and
	the pointwise median. Then $K$ is a compact metrizable  locally convex topological median algebra. Nevertheless, its intrinsic uniformity and intrinsic topology are trivial.
\end{ex} 
\begin{proof}
	The space $K$ is compact and metrizable in the uniform topology
	by the Arzel\`a--Ascoli theorem. It is closed under the pointwise
	median. Indeed, if $f_1,f_2,f_3\in K$, then
	\[
	\left|
	m(f_1,f_2,f_3)(s)-m(f_1,f_2,f_3)(t)
	\right|
	\leq
	\max_{1\leq i\leq3}|f_i(s)-f_i(t)|
	\leq |s-t|.
	\]
	The median operation is continuous in the uniform topology.
	
	The evaluation map
	\(
	K\to[0,1]^{[0,1]}
	\)
	is a continuous injective median homomorphism. Since $K$ is
	compact, it is a topological median embedding. Hence $K$ is
	locally convex.
	
	Let $f\neq g$ in $K$, and put
	$a=\min\{f,g\}$ and $b=\max\{f,g\}$ pointwise. Then
	\[
	[f,g]_K=[a,b]_K
	=
	\{h\in K:a(t)\leq h(t)\leq b(t)
	\text{ for every }t\in[0,1]\}.
	\]
	Choose $t_0\in(0,1)$ such that $a(t_0)<b(t_0)$, and put
	$c=(a(t_0)+b(t_0))/2$. Choose
	$0<\lambda<\min\{1,c,1-c\}$ and define
	\(
	r_+(t)=c+\lambda(t-t_0),
	\ 
	r_-(t)=c-\lambda(t-t_0).
	\) 
	Both functions belong to $K$. Put
	$h_\pm=m(a,b,r_\pm)$. Then $h_\pm\in[a,b]_K$.
	
	On some neighborhood of $t_0$ one has
	$a(t)<r_\pm(t)<b(t)$, and hence $h_\pm(t)=r_\pm(t)$.
	Choose $s<t_0<t$ in this neighborhood. Then
	$
	h_+(s)<h_-(s),
	\ 
	h_+(t)>h_-(t).
	$
	Thus $h_+$ and $h_-$ are incomparable in the natural order
	$\leq_a$ on $[a,b]_K$, which is the pointwise order.
	Consequently, $[f,g]_K$ is not a chain. 
	Since $f\neq g$ were arbitrary, $K$ has no nontrivial chain
	intervals. Therefore both $\mathcal U_m(K)$ and $\tau_{\mathrm m}(K)$
	are trivial.
\end{proof}

Thus compactness, metrizability and local convexity do not imply $(T^m_2)$.

\begin{ex} \label{ex:boolean-dichotomy}
	Let $B$ be an infinite Boolean algebra viewed as a median algebra with respect to the standard Boolean median operation $m(x, y, z) = (x \wedge y) \vee (y \wedge z) \vee (z \wedge x)$ (see \cite{BowditchMedian}).  
	We demonstrate how the intrinsic topology  $\tau_{\mathrm{m}}$ completely characterizes the atomic vs. atomless nature of $B$.
	
	\begin{enumerate}[label=(\alph*)]
		\item \textbf{Chain Intervals in Boolean Algebras:} 
		Let $[u, v]_B$ (with $u < v$) be a nontrivial median interval in $B$. 
		Since any interval $[u, v]_B$ is isomorphic as a median algebra to the Boolean algebra $[0, v \wedge u^*]$, and a Boolean algebra is linearly ordered (a chain) if and only if it has at most two elements, we conclude that $[u, v]_B$ is a chain interval if and only if $v$ covers $u$ in the lattice order. In this case, the interval is an adjacent pair:
		$
		[u, v]_B = \{u, v\}.
		$
		
		Put \(a:=v\wedge u^*\). Since \(u\leq v\), we have
		\(v=u\vee a\), and
		\[
		\phi_{u,v}(x)=(x\vee u)\wedge v
		=u\vee(x\wedge a).
		\]
		Moreover, \(v\) covers \(u\) if and only if \(a\) is an atom.
		In this case \(x\wedge a\in\{0,a\}\), and hence
		\[
		\phi_{u,v}(x)=v \Longleftrightarrow a\leq x,
		\qquad
		\phi_{u,v}(x)=u \Longleftrightarrow a\nleq x.
		\]
		Thus the corresponding two branches are
		\[
		B^u_v=\{x\in B:a\nleq x\},
		\qquad
		B^v_u=\{x\in B:a\leq x\}.
		\]

\item \textbf{The Atomic Case ($B =\mathcal{P}(\mathbb{N})$):}
		For the power set of the natural numbers, the covering relations are precisely of the form $A \prec A \cup \{i\}$ for $A \subseteq \mathbb{N}$ and $i \notin A$. The associated subbasic branch cover $B_{A, A \cup \{i\}} = \{B^{A}_{A \cup \{i\}}, B^{A \cup \{i\}}_{A}\}$ consists of the two halfspaces: 
		\[
		B^{A}_{A \cup \{i\}} = \{ C \subseteq \mathbb{N} \mid i \notin C \} \quad \text{and} \quad B^{A \cup \{i\}}_{A} = \{ C \subseteq \mathbb{N} \mid i \in C \}.
		\]
		As $A$ and $i$ vary, these covers generate the subbase of the product uniformity on the Cantor cube $\{0, 1\}^{\mathbb{N}}$. 
		Hence, the intrinsic uniformity $\mathcal{U}_m$ is Hausdorff, and the intrinsic topology $\tau_{\mathrm{m}}$ is precisely the \textbf{compact Cantor product topology} of coordinate-wise convergence.
		
		\item \textbf{The Atomless Case:}
		Now suppose $B$ is an atomless Boolean algebra (for instance, the algebra of clopen sets of the Cantor space, or the Lebesgue measure algebra modulo null sets). 
		Since $B$ contains no atoms, there are no covering pairs $u \prec v$. Thus, the set of nontrivial chain intervals in $B$ is empty:
		$
		\mathcal{C}(B) = \emptyset.
		$ 
		We obtain that $\mathcal{U}_m$ and $\tau_{\mathrm{m}}$ are trivial. 
	\end{enumerate} 
\end{ex}

\begin{remark} \label{r:NotSubalgebras} 	
	The Cantor cube \(\{0,1\}^{\mathbb Z}\), considered as a median algebra, satisfies \((T^m_2)\), and its median topology is the compact product topology, by Example \ref{ex:boolean-dichotomy}(b) 
	(or, by Propositions~\ref{p:UmProduct}
	and~\ref{p:T2product} below).
	The algebra $P$ of periodic bi-sequences is a (dense, in the product topology) subalgebra of $\{0,1\}^{\Z}$. Thus,  
	one more conclusion of Example \ref{ex:PERIODIC} is that 
	the property $(T^m_2)$ is not hereditary to median
	subalgebras. Moreover,
	$
	\tau_{\mathrm m}(P)
	\subsetneq
	\tau_{\mathrm m}(\{0,1\}^{\mathbb Z})|_P.
	$
	In contrast, for \textbf{convex} subalgebras  both the intrinsic topology
	and the property $(T^m_2)$ are hereditary by
	Proposition~\ref{p:MMCconvex}. 
\end{remark}

\section{The minimal median compactification}  \label{s:MMC} 

Now we show one more important property of $\mathcal{U}_{\mathrm{m}}$ that it is compatible with the median operator. 

\begin{thm} \label{t:UniformMedian}  
	 For every median algebra $X$ 
	the median operation $m \colon X^3 \to X$ is uniformly continuous with respect to the m-uniformity $\mathcal{U}_{\mathrm{m}}$. Hence,  $(X,\tau_{\mathrm{m}})$ is a topological median algebra. 
\end{thm}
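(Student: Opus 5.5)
The plan is to check uniform continuity directly on the subbasic covers of Definition~\ref{d:StarMedianUniformity}. Since $\mathcal U_{\mathrm m}$ is generated by the two-member covers $\mathcal B_{u,v}=\{B^u_v,B^v_u\}$, with $[u,v]$ running over nontrivial chain intervals, a map $m\colon X^3\to X$ is uniformly continuous once the preimage cover $m^{-1}(\mathcal B_{u,v})$ belongs to $\mathcal U_{\mathrm m}\times\mathcal U_{\mathrm m}\times\mathcal U_{\mathrm m}$ for every such pair. Finite intersections are then handled automatically, because preimages commute with $\wedge$. For a fixed chain interval $[u,v]$, I will show that the product cover
\[
\gamma:=\{P_1\times P_2\times P_3 : P_i\in \mathcal B_{u,v}\}
\]
of $X^3$ refines $m^{-1}(\mathcal B_{u,v})$. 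This suffices, since $\gamma$ is the wedge of the pullbacks of $\mathcal B_{u,v}$ under the three coordinate projections, so it lies in the product uniformity.

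The key step is a pigeonhole argument combined with the fact that $\phi_{u,v}$ is a median homomorphism. Take a member $P_1\times P_2\times P_3$ of $\gamma$. Since each $P_i\in\{B^u_v,B^v_u\}$, at least two of the three factors coincide; say this common factor is $B^u_v$, the other case being symmetric. I claim that then $m(x,y,z)\in B^u_v$ for every $(x,y,z)\in P_1\times P_2\times P_3$.

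To see this, write $a=\phi_{u,v}(x)$, $b=\phi_{u,v}(y)$, $c=\phi_{u,v}(z)$. By the description after Definition~\ref{d:MedianBranch}, two of these points, say $a$ and $b$, lie in $[u,v)$, that is, $a,b<_u v$. Because $\phi_{u,v}$ is median preserving, $\phi_{u,v}(m(x,y,z))=m(a,b,c)$, where the median is computed inside the chain $([u,v],\leq_u)$. In a linear order, the median of three points two of which are strictly below $v$ is itself strictly below $v$: it is at most $\max(a,b)$. Hence $\phi_{u,v}(m(x,y,z))\neq v$, i.e.\ $m(x,y,z)\in B^u_v$. This gives the refinement $\gamma\succ m^{-1}(\mathcal B_{u,v})$, and therefore uniform continuity.

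Alternatively, the same argument can be phrased via Theorem~\ref{t:InitialUniformity}: it reduces to uniform continuity of $\phi_{u,v}\circ m=m_{[u,v]}\circ\phi_{u,v}^{3}$, together with the fact that the chain median $([u,v],\mu_\le)^3\to([u,v],\mu_\le)$ is uniformly continuous by the same pigeonhole argument on the rays $L_v,R_u$ of Lemma~\ref{l:LOTSSubbase}.

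The final assertion follows because a uniformly continuous map is continuous for the induced topologies, and the topology of the product uniformity is the product topology. Thus $m\colon(X,\tau_{\mathrm m})^3\to(X,\tau_{\mathrm m})$ is continuous, with no Hausdorff assumption needed. The only point needing care is the verification that the median computed in $X$ agrees with the order median on the chain $[u,v]$ after applying $\phi_{u,v}$; this is exactly the homomorphism property of the gate retraction together with the remark that the median on a chain interval is $m_{\le_u}$. I expect no real obstacle beyond this.
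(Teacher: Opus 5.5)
Your proof is correct and follows the paper's argument: the same pigeonhole reduction applied to the eight members of the product cover $\mathcal B_{u,v}^3$. The only cosmetic difference is in the final step. The paper uses convexity of the branch halfspace (Lemma~\ref{l:ConvexFibers}) together with $m(x,y,z)\in[x,y]$, whereas you inline that convexity by pushing the median through the homomorphism $\phi_{u,v}$ and computing in the chain $([u,v],\leq_u)$, which is exactly how that lemma is proved.
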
 
\begin{proof}
	$\mathcal{U}_{\mathrm{m}}$ is generated by the subbase of covers $\mathcal{B}_{u,v} = \{H_1, H_2\}$, where $H_1 = B^u_v$, $H_2 = B^v_u$, and $[u,v]$ is a chain interval. So, it suffices to show that the product cover $\mathcal{B}_{u,v}^3$ on $X^3$ refines $m^{-1}(\mathcal{B}_{u,v})$.
	
	The cover $\mathcal{B}_{u,v}^3$ consists of exactly 8 sets of the form $A \times B \times C$, where $A, B, C \in \{H_1, H_2\}$. Let $A \times B \times C$ be one such set. 
	By the pigeonhole principle, two of \(A,B,C\) are equal. Since the median
	operation is symmetric, we may assume without loss of generality that \(A=B=H_i\) for some $i \in \{1,2\}$. 
	
	For any point $(x,y,z) \in A \times B \times C$, we have $x \in H_i$ and $y \in H_i$. By Lemma \ref{l:ConvexFibers}, $H_i$ is a convex halfspace.  
	Since $H_i$ is convex and $x,y \in H_i$, one has $[x,y] \subseteq H_i$; since $m(x,y,z) \in [x,y]$,  it follows that $m(x,y,z) \in H_i$. 
	This proves that $m(A \times B \times C) \subseteq H_i$. Consequently, every set in the product cover $\mathcal{B}_{u,v}^3$ is mapped entirely into a single set of $\mathcal{B}_{u,v}$. Thus, $\mathcal{B}_{u,v}^3 \succ m^{-1}(\mathcal{B}_{u,v})$, establishing that $m$ is uniformly continuous.
\end{proof}

Thus, $\mathcal{U}_{\mathrm{m}}$ is median-compatible in the sense of Definition \ref{d:median-compatible} for every Hausdorff $\mathcal{U}_{\mathrm{m}}$. 

\begin{thm} \label{t:MMC} 
	Let $X \in (T^m_2)$. 
	Denote by $\nu_m \colon X \hookrightarrow \widehat{X}$ the proper compactification induced by the (precompact) Hausdorff median-uniformity $\mathcal{U}_{\mathrm{m}}$. Then 
	\begin{enumerate}
		\item 	
		$\widehat{X}$ admits a continuous median $\widehat{m}$ which extends the median on $X$ and  
		$\widehat{X}$ is a compact, Hausdorff median algebra. 
		
		\item \(\widehat X\) is the Minimal Median Compactification of \((X,\tau_{\mathrm m})\): if
		\(K\) is any proper median compactification of \((X,\tau_{\mathrm m})\), then there exists
		a unique continuous surjective median homomorphism
		$
		f \colon K\to \widehat X
		$
		such that \(f(x)=x\) for every \(x\in X\). 
	\end{enumerate} 
\end{thm}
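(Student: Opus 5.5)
Part (1) is essentially an application of Proposition~\ref{p:Median-compactif} together with Theorem~\ref{t:UniformMedian}. Since $X\in(T^m_2)$, Lemma~\ref{l:medT2Hausdorff} shows that $\mathcal U_{\mathrm m}$ is Hausdorff. It is precompact because it is generated by finite (two-member) covers, and by Theorem~\ref{t:UniformMedian} it is median-compatible. By construction its topology is $\tau_{\mathrm m}$. The first half of the proof of Proposition~\ref{p:Median-compactif} then applies. The completion $\widehat X$ is compact, $m$ extends uniquely to a uniformly continuous $\widehat m$ on $\widehat X^3$, and the identities (M1)--(M3) pass to $\widehat X$ by density and continuity. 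Moreover $\nu_m$ is a dense median topological embedding.

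For part (2), let $\eta\colon X\to K$ be a proper median compactification of $(X,\tau_{\mathrm m})$, and identify $X$ with $\eta(X)$. By Proposition~\ref{p:Median-compactif}, producing the factor map $f$ is equivalent to proving $\mathcal U_{\mathrm m}\subseteq\mathcal U_\eta$, where $\mathcal U_\eta$ is the trace on $X$ of the unique uniformity of $K$. It suffices to check the subbasic covers. So fix a nontrivial chain interval $[u,v]$ in $X$; we must show $\mathcal B_{u,v}\in\mathcal U_\eta$. The plan is to realize $\mathcal B_{u,v}$ as the trace of a finite open cover of $K$. By Lemma~\ref{l:IntervalClosure}, $[u,v]_K=\overline{[u,v]_X}^K$ is a chain, and the retraction $\Phi:=\phi^K_{u,v}\colon K\to[u,v]_K$ is continuous and restricts to $\phi_{u,v}$ on $X$. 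The natural candidates are
\[
\widetilde B^u_v:=\Phi^{-1}\bigl([u,v]_K\setminus\{v\}\bigr),\qquad
\widetilde B^v_u:=\Phi^{-1}\bigl([u,v]_K\setminus\{u\}\bigr).
\]
Both are open in $K$ because $K$ is Hausdorff, so points are closed. They cover $K$ because $u\neq v$, and their traces on $X$ are exactly $B^u_v$ and $B^v_u$. Every finite open cover of a compact Hausdorff space is a uniform cover. Hence its trace $\mathcal B_{u,v}$ lies in $\mathcal U_\eta$. Taking finite meets gives $\mathcal U_{\mathrm m}\subseteq\mathcal U_\eta$.

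From this inclusion the identity map $(X,\mathcal U_\eta)\to(X,\mathcal U_{\mathrm m})$ is uniformly continuous, so it extends to a continuous map $f\colon K\to\widehat X$ between the completions (note that $K$ is the completion of $(X,\mathcal U_\eta)$). The image of $f$ is compact and contains the dense set $X$, so $f$ is surjective. $f$ is median-preserving because it is median-preserving on the dense subalgebra $X$ and both medians are continuous. Uniqueness holds because two continuous maps into a Hausdorff space that agree on a dense set coincide.

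I expect the main subtle point to be the openness and covering step in $K$. It uses only that $\Phi$ is continuous and $K$ is Hausdorff, so the chain property of $[u,v]_K$ is not actually needed there. However, one should double-check that the trace of $\widetilde B^u_v$ equals $B^u_v$. This holds because $\Phi|_X=\phi_{u,v}$ takes values in $[u,v]_X$. The fact that $\operatorname{top}(\mathcal U_\eta)=\tau_{\mathrm m}$ is not needed for the inclusion itself, only for $K$ being a proper compactification.
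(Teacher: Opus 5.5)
Your proposal is correct and follows essentially the same route as the paper. Part (1) is obtained via Theorem~\ref{t:UniformMedian} and completion. Part (2) shows that each subbasic cover $\mathcal B_{u,v}$ is the trace of the open cover $\{\widetilde B^u_v,\widetilde B^v_u\}$ of $K$ coming from the continuous retraction $\phi^K_{u,v}$, which gives $\mathcal U_{\mathrm m}\subseteq\mathcal U_K$, and then extends the identity to $f$. Your remark that the chain property of $[u,v]_K$ is not needed in this step is accurate, and your explicit uniqueness and surjectivity arguments fill in points the paper leaves implicit.
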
    
\begin{proof} (1) 
$\mathcal{U}_{\mathrm{m}}$ is precompact by definition and Hausdorff because $X \in (T^m_2)$. Therefore, 
its completion $\widehat{X}$ is compact Hausdorff. 

By Theorem~\ref{t:UniformMedian}, the median operation is uniformly
continuous. Since Hausdorff completion commutes with finite
products, it extends uniquely to a continuous ternary operation
$\widehat m\colon\widehat X^3\to\widehat X$. The median identities pass to the completion by continuity and density. This proves that $\widehat{X}$ is a compact Hausdorff median algebra. 
	
	(2) To prove minimality, let \(K\) be a proper median compactification of
	\((X,\tau_{\mathrm m})\). Thus \(K\) is a compact Hausdorff topological median algebra
	and \(X\) is identified with a dense median subalgebra of \(K\).
 Because $K$ is a compact Hausdorff space, it admits a unique compatible uniformity, which induces a subspace uniformity $\mathcal{U}_K$ on $X$. By Proposition \ref{p:Median-compactif} we have to show that $\mathcal{U}_{\mathrm{m}} \subseteq \mathcal{U}_K$. 
 It is enough to show $\mathcal{B}_{u,v} \in \mathcal{U}_K$, where $\mathcal{B}_{u,v} = \{B^u_v, B^v_u\}$ is a subbasic cover of $\mathcal{U}_{\mathrm{m}}$ generated by a chain $[u,v] \subseteq X$ with $u \neq v$.  
 
	In the compact median algebra $K$, the interval $[u,v]_K$ is the topological closure of $[u,v]_X$ by Lemma \ref{l:IntervalClosure}. Moreover,  $[u,v]_K$ remains a chain in $K$ 
	by the same lemma.  
	Consider the canonical retraction $\phi^K_{u,v} \colon K \to [u,v]_K$. 
	Because $K$ is a topological median algebra, this map is continuous.  
	Since \(K\) is Hausdorff, the singletons \(\{u\}\) and \(\{v\}\)
	are closed in the interval \([u,v]_K\). Hence 	
	\[
	[u,v)_K=[u,v]_K \setminus \{v\},
	\qquad
	(u,v]_K=[u,v]_K \setminus \{u\}
	\]
	are open in the subspace topology of \([u,v]_K\).  
	 Therefore the following subsets are open in $K$:  
	$$\widetilde B^u_v=(\phi^K_{u,v})^{-1}([u,v)_K),
	\qquad
	\widetilde B^v_u=(\phi^K_{u,v})^{-1}((u,v]_K).$$	 
Since $u \neq v$, $\{\widetilde{B}_v^u, \widetilde{B}_u^v\}$ is an open cover of $K$.
	
	In a compact Hausdorff space, every open cover is a uniform cover. Therefore, $\{\widetilde{B}_v^u, \widetilde{B}_u^v\}$ is a uniform cover of $K$.  
	Its restriction to \(X\) is exactly
	\(\mathcal B_{u,v}=\{B^u_v,B^v_u\}\). Indeed,
	\[
	\widetilde B^u_v\cap X
	=
	(\phi^K_{u,v}|_X)^{-1}
	\bigl([u,v)_K\cap[u,v]_X\bigr)
	=
	\phi_{u,v}^{-1}([u,v)_X)
	=
	B^u_v,
	\]
	and similarly,
	$\widetilde B^v_u\cap X
	=
	\phi_{u,v}^{-1}((u,v]_X)
	=
	B^v_u.$ 
	
	Consequently, every subbasic cover of $\mathcal{U}_{\mathrm{m}}$ belongs to $\mathcal{U}_K$, which proves $\mathcal{U}_{\mathrm{m}} \subseteq \mathcal{U}_K$.
	  
	The identity map $\operatorname{id} \colon (X, \mathcal{U}_K) \to (X, \mathcal{U}_{\mathrm{m}})$ is uniformly continuous. By the universal property of completions, this map extends uniquely to a continuous surjection $f \colon K \to \widehat{X}$. Finally, because $f$ is the identity on the dense median subalgebra $X$, the continuity of the median operations ensures $f$ is a median homomorphism.
\end{proof}

\begin{defin} \label{d:MMC} Let $X \in (T^m_2)$. 
We call the proper compactification
$\nu_m\colon X\hookrightarrow\widehat X$ from
Theorem~\ref{t:MMC} the \textbf{Minimal Median Compactification}
(MMC) of $X$.	 

Identifying \(X\) with its image, it is natural to call the complement \(\widehat{X}\setminus X\) the MMC-boundary, or median boundary, and to denote it by \(\partial_m(X)\).   	
\end{defin}

An important property of MMC is that its compact
topology is intrinsic:  
by Proposition~\ref{p:MMCisMedHausd} below,
$\widehat X\in(T^m_2)$ and $\tau_{\mathrm m}(\widehat X)$ coincides with
the compact topology of $\widehat X$ for every $X \in (T^m_2)$.

\begin{prop} \label{p:LOTS_case}
	Let $(X,m)$ be a median algebra.
	\begin{enumerate}
		\item \textbf{The LOTS Case:} If $X$ itself is a chain equipped with a linear order $\le$, then 
		\begin{enumerate} 
			\item Median uniformity $\mathcal{U}_{\mathrm m}(X)$ coincides with the interval uniformity $\mu_\le$ from Fact \ref{f:DedekindUniformityInternal}.  
			\item Its intrinsic m-topology $\tau_{\mathrm m}(X)$ coincides with the classical interval topology $\lambda_{\leq}$.
			\item Minimal Median Compactification $\widehat{X}$ is the classical Dedekind compactification.
		\end{enumerate}
		 
		\item \textbf{Chain Intervals:} Let $C = [u, v]$ be a chain interval in an arbitrary median algebra $X$, equipped with its natural order $\le_u$. Then the retraction $\phi_{u,v} \colon (X, \Ucal_{\mathrm m}(X)) \to (C, \mu_{\leq}(C))$ is uniformly continuous. Hence, $\phi_{u,v} \colon (X, \tau_{\mathrm m}(X)) \to (C, \tau_{\mathrm m}(C))$ is  continuous. 	  
	\end{enumerate}
\end{prop}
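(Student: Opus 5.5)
For part (1)(a) I will compare subbases. When $X$ is a chain with order $\le$, every interval $[u,v]$ with $u<v$ is a chain and inherits the order $\le$ (or its reverse, by the discussion after the definition of $\le_a$). The branches are computed directly: for $u<v$ we have $\phi_{u,v}(x)=u$ exactly when $x\le u$, and $\phi_{u,v}(x)=v$ exactly when $x\ge v$. Hence
\[
B^u_v=(-\infty,v)=L_v
\qquad\text{and}\qquad
B^v_u=(u,\infty)=R_u .
\]
So $\mathcal B_{u,v}=\mathcal O_{u,v}$ for every pair $u<v$. The covers $\mathcal B_{u,v}$ form a subbase of $\mathcal U_{\mathrm m}$ (Definition~\ref{d:StarMedianUniformity} together with Theorem~\ref{t:InitialUniformity}). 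The covers $\mathcal O_{u,v}$ form a subbase of $\mu_\le$ by Lemma~\ref{l:LOTSSubbase}. Two uniformities with identical subbases coincide, which gives (a).

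Part (b) then follows because $\mu_\le$ induces $\lambda_\le$ (Fact~\ref{f:DedekindUniformityInternal}). Alternatively, by Corollary~\ref{c:BranchesSubbaseTauM} the open subbase of $\tau_{\mathrm m}$ consists exactly of the open rays.

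For (c), note that $\mathcal U_{\mathrm m}=\mu_\le$ is Hausdorff, so $X\in(T^m_2)$ by Lemma~\ref{l:medT2Hausdorff}. The MMC is by definition the completion of $(X,\mathcal U_{\mathrm m})$. By Fact~\ref{f:DedekindUniformityInternal}, the completion of $(X,\mu_\le)$ is the Dedekind compactification. The only point to remark is that the median extended to the completion is the betweenness median of the Dedekind order. This holds by continuity and density: by Fact~\ref{f:facts2}(2) the completion is again linear, and the two continuous medians agree on the dense set $X$.

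For part (2), let $C=[u,v]$ be a chain interval. First, $\mathcal U_{\mathrm m}(C)=\mu_\le(C)$ by part (1) applied to the chain $C$. By Lemma~\ref{l:LOTSSubbase}, uniform continuity of $\phi_{u,v}$ reduces to showing that the pullback of each subbasic cover $\mathcal O_{c,d}=\{L_d,R_c\}$ of $C$, with $u\le_u c<_u d\le_u v$, lies in $\mathcal U_{\mathrm m}(X)$. By Lemma~\ref{l:PreimageRays} this pullback is $\{B^u_d,B^v_c\}$. By Lemma~\ref{lem:branch_equiv}, applied on the chains $[u,d]$ and $[c,v]$, it equals $\mathcal B_{c,d}$. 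Since $[c,d]\subseteq C$ is itself a nontrivial chain interval, $\mathcal B_{c,d}$ is a subbasic cover of $\mathcal U_{\mathrm m}(X)$. This is exactly the computation already carried out in the proof of Theorem~\ref{t:InitialUniformity}, so I would simply cite it. Continuity for the induced topologies follows, since uniformly continuous maps are continuous.

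The main, though mild, obstacle is in (1): checking that the identification of the branches with rays is insensitive to which endpoint is the smaller one. This is handled by the symmetry $\mathcal B_{u,v}=\mathcal B_{v,u}$.
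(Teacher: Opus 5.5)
Your proposal is correct and follows essentially the same route as the paper. In (1) you identify the subbasic branch covers $\mathcal B_{u,v}$ with the two-ray covers $\mathcal O_{u,v}$ of Lemma~\ref{l:LOTSSubbase}, and in (2) you show $\phi_{u,v}^{-1}(\mathcal O_{c,d})=\mathcal B_{c,d}$. The paper computes this pullback directly via Lemma~\ref{l:compositionOFgates}(1), whereas you obtain it from Lemmas~\ref{l:PreimageRays} and~\ref{lem:branch_equiv} as in the proof of Theorem~\ref{t:InitialUniformity}, and your remark in (1)(c) that the extended median is the Dedekind order median fills in a detail the paper leaves implicit.
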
 
\begin{proof}
	(1) Suppose first that \(X\) itself is a chain and that the median is induced
	by the order \(\leq\). For \(x<y\), we have
	\(
	B^x_y=(-\infty,y),\qquad B^y_x=(x,\infty).
	\)
	Thus the subbasic covers \(\mathcal B_{x,y}=\{B^x_y,B^y_x\}\) are exactly the
	two-ray covers \(\{L_y,R_x\}\) from Lemma~\ref{l:LOTSSubbase}. Hence
	\(\mathcal U_{\mathrm m}(X)=\mu_\leq\). The equality of the induced topologies follows,
	and the compactification is the classical Dedekind compactification.
	
	(2) Let \(C=[u,v]\) be a chain interval in \(X\), ordered by \(\leq_u\).
	By Lemma~\ref{l:LOTSSubbase}, the interval uniformity \(\mu_\leq(C)\) has a
	uniform subbase consisting of the covers
	$
	O_{a,b}=\{L_b,R_a\},
	\ a<_u b,\ a,b\in C,
	$
	where
	\[
	L_b=\{t\in C:t<_u b\},\qquad R_a=\{t\in C:a<_u t\}.
	\]
	We show that the inverse image of each \(O_{a,b}\) under \(\phi_{u,v}\) belongs
	to \(\mathcal U_{\mathrm m}(X)\). 
	Since \(a,b\in C=[u,v]\), the interval \([a,b]\) is contained in \(C\), hence
	is a chain interval in \(X\). 
	
	For $z\in X$, put $p=\phi_{u,v}(z)$. Since
	$[a,b]\subseteq[u,v]$, Lemma~\ref{l:compositionOFgates}(1) gives
	$\phi_{a,b}(z)=\phi_{a,b}(p)$. On the chain $C$, one has
	$\phi_{a,b}(p)\neq b$ if and only if $p<_u b$, and
	$\phi_{a,b}(p)\neq a$ if and only if $a<_u p$.
	Consequently,
	\[
	\phi_{u,v}^{-1}(L_b)=B^a_b,
	\qquad
	\phi_{u,v}^{-1}(R_a)=B^b_a.
	\] 
	Therefore, 
	\( 
	\phi_{u,v}^{-1}(O_{a,b})=\{B^a_b,B^b_a\}=\mathcal B_{a,b},
	\) 
	which is a subbasic cover of \(\mathcal U_{\mathrm m}(X)\). Hence \(\phi_{u,v}\) is
	uniformly continuous. 
\end{proof}

\begin{prop}\label{p:ConvexInCompactification}
	Let $\eta\colon X\hookrightarrow K$ be a proper median
	compactification of a topological median algebra
	$(X,m,\sigma)$. Suppose that every interval $[u,v]_X$ is compact in the topology
	$\sigma$. Then, identifying $X$ with $\eta(X)$,
	the algebra $X$ is a convex subalgebra of $K$.
\end{prop}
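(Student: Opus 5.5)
Identifying $X$ with $\eta(X)$, I will show that for all $u,v\in X$ the interval $[u,v]_K$ is contained in $X$. This suffices: if $x,y\in X$ and $z\in K$ lies between them, i.e. $z\in[x,y]_K$, then $z\in X$, so $X$ is convex in $K$. The tool is Lemma~\ref{l:IntervalClosure}. Since $X$ is a dense median subalgebra of the compact Hausdorff topological median algebra $K$, that lemma gives
\[
[u,v]_K=\overline{[u,v]_X}^{\,K}.
\]

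Next I use the compactness hypothesis. The interval $[u,v]_X$ is compact in $\sigma$. Since $\eta$ is a topological embedding, hence continuous, its image $\eta([u,v]_X)$ is a compact subset of $K$. Because $K$ is Hausdorff, this image is closed in $K$, so its closure in $K$ is itself. Combining with the display above,
\[
[u,v]_K=[u,v]_X\subseteq X .
\]
This is exactly the inclusion needed. For completeness I also note that $[u,v]_X=[u,v]_K\cap X$, which holds for any subalgebra: the membership condition $m(u,v,z)=z$ is computed the same way in $X$ and in $K$.

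There is no real obstacle. The only point requiring care is that Lemma~\ref{l:IntervalClosure} uses the continuity of the retraction $\phi_{u,v}$ on $K$ and the density of $X$, and both are part of the definition of a proper median compactification. Only continuity of $\eta$ is needed to transfer compactness; the fact that $\eta$ is an embedding is not otherwise used.
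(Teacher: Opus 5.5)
Your proof is correct and matches the paper's argument: Lemma~\ref{l:IntervalClosure} gives $[u,v]_K=\overline{[u,v]_X}^{\,K}$. Compactness of $[u,v]_X$ makes it closed in the Hausdorff space $K$, so $[u,v]_K=[u,v]_X\subseteq X$.
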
 
\begin{proof}
	Let $u,v\in X$. By Lemma~\ref{l:IntervalClosure},
	the median interval $[u,v]_K$ is the closure of $[u,v]_X$ in $K$.
	Since $[u,v]_X$ is compact and $K$ is Hausdorff, it is closed
	in $K$. Hence $[u,v]_K=[u,v]_X\subseteq X$. Thus $X$ is
	convex in $K$.
\end{proof}  

\begin{prop} \label{p:MMCconvex}  
	Let \(X\) be a median algebra and let \(C\subseteq X\) be convex. Then 
	\[
	\mathcal U_{\mathrm m}(C)=\mathcal U_{\mathrm m}(X)|_C
	\quad\text{and}\quad
	\tau_{\mathrm m}(C)=\tau_{\mathrm m}(X)|_C.
	\]
	In particular, if \(X\in(T^m_2)\), then \(C\in(T^m_2)\). In this case,
	the MMC of \(C\) is canonically isomorphic, as a compact topological
	median algebra, to the closure of \(C\) in the MMC of \(X\).
\end{prop}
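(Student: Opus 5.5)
The plan is to compare the subbases directly, using the gate retraction $\pi:=\phi_C\colon X\to C$ onto the convex set $C$. If $\pi$ is unavailable without an extra hypothesis (gates for arbitrary convex sets may fail to exist), I will instead argue with intervals only. Two facts drive everything. First, since $C$ is convex, $[u,v]_C=[u,v]_X$ for $u,v\in C$, so the chain intervals of $C$ are exactly the chain intervals of $X$ with both endpoints in $C$. Second, for such a chain interval, the branches of $C$ are the traces of the branches of $X$: $B^u_v(C)=B^u_v(X)\cap C$, because $m(u,x,v)$ is computed in $X$. Hence every subbasic cover $\mathcal B_{u,v}(C)$ is the trace of $\mathcal B_{u,v}(X)$, which gives $\mathcal U_{\mathrm m}(C)\subseteq\mathcal U_{\mathrm m}(X)|_C$.

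For the reverse inclusion, take a chain interval $[u,v]$ of $X$ with $u\ne v$ and restrict $\mathcal B_{u,v}$ to $C$. Pick a point $c_0\in C$ and project: $c:=\phi_{u,v}(c_0)$. A cleaner formulation is to consider $\phi_{u,v}(C)$. This is a convex subset of the chain $[u,v]$, namely the image of a convex set under a surjective homomorphism onto an interval. I will distinguish three cases. If $\phi_{u,v}(C)$ misses $u$ or misses $v$, then $C$ lies in a single member of $\mathcal B_{u,v}$, the trace cover is refined by the trivial cover $\{C\}$, and there is nothing to prove. Otherwise, choose $a,b\in C$ with $\phi_{u,v}(a)=u$ and $\phi_{u,v}(b)=v$, and put $x:=\phi_{a,b}(u)$, $y:=\phi_{a,b}(v)$, which lie in $[a,b]\subseteq C$. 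Lemma~\ref{l:compositionOFgates}(2), with the roles of the pairs swapped ($\phi_{u,v}(a)=u$, $\phi_{u,v}(b)=v$), shows that $\phi_{a,b}|_{[u,v]}\colon[u,v]\to[x,y]$ is a median isomorphism. So $[x,y]\subseteq C$ is a chain interval with $\phi_{u,v}(x)=u$ and $\phi_{u,v}(y)=v$, and $x\neq y$.

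The main step is then to verify that $B^u_v\cap C=B^x_y\cap C$ and $B^v_u\cap C=B^y_x\cap C$. For $z\in C$, I would compare $\phi_{x,y}(z)$ with $\phi_{u,v}(z)$ via the mutually inverse isomorphisms of Lemma~\ref{l:compositionOFgates}(2) between $[x,y]$ and $[u,v]$. I expect the identity $\phi_{u,v}(z)=\phi_{u,v}(\phi_{x,y}(z))$ to hold for $z\in[a,b]$, or more generally for $z$ whose gate behaves well, via Lemma~\ref{l:compositionOFgates}(1) and the four-point identities $m(u,m(x,z,y),v)$. That identity transfers the conditions ``$=u$'' and ``$=v$'' between the two retractions, giving $\mathcal B_{u,v}|_C=\mathcal B_{x,y}(C)$. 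This verification for general $z\in C$, not just $z\in[a,b]$, is the expected obstacle. My first attempt is to replace $a,b$ by $a':=m(a,z,b)$-type adjustments, that is, to enlarge the pair $a,b$ so that $z\in[a,b]$, or to use that the median of $z,u,v$ and the gate to $[a,b]$ commute, by Lemma 7.3.3 of Bowditch as in Lemma~\ref{l:compositionOFgates}. If the equality fails pointwise, it suffices that $\mathcal B_{u,v}|_C$ be refined by a finite wedge of such covers $\mathcal B_{x,y}(C)$, which would follow from covering finitely many cases.

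Once the uniformities agree, the topologies agree. $(T^m_2)$ passes to $C$ because the restriction of a Hausdorff uniformity is Hausdorff, together with Lemma~\ref{l:medT2Hausdorff}. Finally, the closure $\overline C$ of $C$ in $\widehat X$ is compact, and it is the completion of $(C,\mathcal U_{\mathrm m}(X)|_C)=(C,\mathcal U_{\mathrm m}(C))$, hence equal to $\widehat C$. It is a median subalgebra by continuity of $\widehat m$, so the identification is an isomorphism of compact topological median algebras.
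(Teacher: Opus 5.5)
\textbf{There is a genuine gap at the step you yourself flag as the obstacle.} Most of your outline coincides with the paper's proof: the first inclusion via $[u,v]_C=[u,v]_X$, the case split on whether $C$ meets both shadows, the choice of $a,b\in C$ with $\phi_{u,v}(a)=u$ and $\phi_{u,v}(b)=v$, the chain interval $[x,y]=[\phi_{a,b}(u),\phi_{a,b}(v)]\subseteq C$ obtained from Lemma~\ref{l:compositionOFgates}(2), the transfer of $(T^m_2)$, and the identification of $\widehat C$ with $\overline C$. The gap is that you only expect $\phi_{u,v}(z)=\phi_{u,v}(\phi_{x,y}(z))$ for $z\in[a,b]$, and you offer workarounds instead of a proof.

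Those workarounds would not work as stated. Enlarging $a,b$ until $z\in[a,b]$ makes $[x,y]$ depend on $z$. You would then no longer have a single cover $\mathcal B_{x,y}(C)$ refining the trace of $\mathcal B_{u,v}$. The ``finite wedge'' fallback has no argument behind it. So, as written, the reverse inclusion $\mathcal U_{\mathrm m}(X)|_C\subseteq\mathcal U_{\mathrm m}(C)$ is not established.

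The missing observation is that no restriction on $z$ is needed. Apply Lemma~\ref{l:compositionOFgates}(1) with outer interval $[u,v]$ and inner interval $[x,y]$. Since $\phi_{u,v}(x)=u$ and $\phi_{u,v}(y)=v$, the lemma gives $\phi_{u,v}\circ\phi_{x,y}=\phi_{u,v}$ on all of $X$, not just on $[a,b]$ or on $C$.

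Next, $\phi_{u,v}|_{[x,y]}$ is a bijection onto $[u,v]$ sending $x\mapsto u$ and $y\mapsto v$; this is the inverse of $\phi_{a,b}|_{[u,v]}$ from part (2). Hence for every $z\in X$ you get $\phi_{u,v}(z)=v\iff\phi_{x,y}(z)=y$ and $\phi_{u,v}(z)=u\iff\phi_{x,y}(z)=x$. Therefore $B^u_v=B^x_y$ and $B^v_u=B^y_x$ already in $X$.

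Intersecting with $C$, and using $[x,y]_C=[x,y]_X$, gives $\mathcal B_{u,v}|_C=\mathcal B_{x,y}(C)$. This is exactly how the paper closes the reverse inclusion. With that one line added, the rest of your argument goes through.
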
 
\begin{proof}  
	First we show that
	$
	\mathcal U_{\mathrm m}(C)\subseteq \mathcal U_{\mathrm m}(X)|_C .
	$
	Let \(\mathcal B^C_{c,d}=\{B^c_d(C),B^d_c(C)\}\) be a subbasic cover of
	\(\mathcal U_{\mathrm m}(C)\), where \([c,d]_C\) is a nontrivial chain interval in \(C\).
	Since \(C\) is convex, \([c,d]_X=[c,d]_C\). Hence \([c,d]_X\) is also a chain
	interval in \(X\), and
	$
	B^c_d(C)=B^c_d(X)\cap C,\ 
	B^d_c(C)=B^d_c(X)\cap C.
	$
	Thus \(\mathcal B^C_{c,d}\) is the restriction to \(C\) of the subbasic cover
	\(\mathcal B^X_{c,d}\).
	
	Conversely, in order to show \(
	\mathcal U_{\mathrm m}(X)|_C \subseteq \mathcal U_{\mathrm m}(C) 
	\), let
	\(
	\mathcal B^X_{x,y}=\{B^x_y(X),B^y_x(X)\}
	\) 
	be a subbasic cover of \(\mathcal U_{\mathrm m}(X)\), where
	\([x,y]_X\) is a nontrivial chain interval. We prove that its
	restriction to \(C\) belongs to \(\mathcal U_{\mathrm m}(C)\). 
	Put \(\phi=\phi_{x,y}\). Recall that
	\(
	S^x_y=\phi^{-1}(\{y\}),\ 
	S^y_x=\phi^{-1}(\{x\}).
	\) 
	If \(C\) is disjoint from one of these shadows, then one member of
	the restricted cover is equal to \(C\). Hence the restricted cover is
	a uniform cover of \(C\).
	
	Assume now that both shadows meet \(C\). Choose \(p,q\in C\) such that
	\(
	\phi(p)=y,\ \phi(q)=x.
	\) 
	Since \(C\) is convex, \([q,p]_X\subseteq C\). Define
	$
	a:=\phi_{q,p}(x),\ b:=\phi_{q,p}(y).
	$
	By Lemma~\ref{l:compositionOFgates}(2), applied to the intervals
	\([x,y]\) and \([q,p]\), the intervals \([x,y]\) and \([a,b]\)
	are naturally isomorphic. More precisely,
	\(
	\phi_{x,y}|_{[a,b]} \colon [a,b]\to [x,y]
	\) 
	is a median isomorphism satisfying
	$
	\phi_{x,y}(a)=x,\ \phi_{x,y}(b)=y.
	$
	In particular, \([a,b]\) is a nontrivial chain interval. Moreover,
	\(
	[a,b]_X\subseteq[q,p]_X\subseteq C,
	\) 
	and hence \([a,b]_C=[a,b]_X\). 
	Applying Lemma~\ref{l:compositionOFgates}(1) to the intervals
	\([a,b]\) and \([x,y]\), we obtain
	\(
	\phi_{x,y}\circ\phi_{a,b}=\phi_{x,y}.
	\) 
	Since \(\phi_{x,y}|_{[a,b]}\) is injective and maps \(a\) to \(x\)
	and \(b\) to \(y\), for every \(z\in C\) we have
	\[
	\phi_{x,y}(z)=y
	\quad\Longleftrightarrow\quad
	\phi_{a,b}(z)=b,
	\]
	and similarly,
	\[
	\phi_{x,y}(z)=x
	\quad\Longleftrightarrow\quad
	\phi_{a,b}(z)=a.
	\]
	Consequently,
	$
	B^x_y(X)\cap C=B^a_b(C),\ 
	B^y_x(X)\cap C=B^b_a(C).
	$
	Thus the restriction of \(\mathcal B^X_{x,y}\) to \(C\) is the
	subbasic intrinsic cover \(\mathcal B^C_{a,b}\). 
	Therefore
	\(
	\mathcal U_{\mathrm m}(X)|_C \subseteq \mathcal U_{\mathrm m}(C).
	\) 
	Hence \(
	\mathcal U_{\mathrm m}(X)|_C = \mathcal U_{\mathrm m}(C).
	\)    
		The equality of the induced topologies follows immediately:
	\[
	\tau_{\mathrm m}(C)=\operatorname{top}(\mathcal U_{\mathrm m}(C))
	=\operatorname{top}(\mathcal U_{\mathrm m}(X)|_C)
	=\tau_{\mathrm m}(X)|_C.
	\]
	\sk 
	If $X\in(T^m_2)$, then $\mathcal U_{\mathrm m}(X)$ is Hausdorff by
	Lemma~\ref{l:medT2Hausdorff}. Hence its restriction
	$\mathcal U_{\mathrm m}(X)|_C$ is Hausdorff, and therefore so is
	$\mathcal U_{\mathrm m}(C)$. By the same lemma, $C\in(T^m_2)$.
	
	Finally, the completion of a uniform subspace of a Hausdorff uniform
	space is canonically uniformly isomorphic to its closure in the
	completion of the ambient space. Therefore, the completion of
	\((C,\mathcal U_{\mathrm m}(C))\) is canonically isomorphic to the closure of
	\(C\) in the completion of \((X,\mathcal U_{\mathrm m}(X))\). This isomorphism
	is median-preserving, since both median operations are the unique
	continuous extensions of the median operation on the dense
	subalgebra \(C\).  
\end{proof} 

Propositions~\ref{p:MMCconvex} and
\ref{p:ConvexInCompactification} are uniform-topological analogues
of the corresponding zero-completion and Roller--Fioravanti
compactification results in
\cite[Lemma~4.8 and Theorem~4.14(2),(3)]{Fioravanti20}. 

\begin{remark} \label{r:supercomp} 
	Normal supercompactness is an important topological property in the theory of convex structures extensively studied via normal binary closed subbases. In the setting of a compact Hausdorff topological median algebra, it is equivalent to the space being locally convex. 
	This equivalence is standard in the language of binary convexities and screening
	properties; see, for example, \cite{vanMill,Vel-book,KKT}.   
\end{remark}

\begin{prop}[Intrinsic topology of the MMC]  \label{p:MMCisMedHausd} 
	Let $X \in (T^m_2)$ with its Hausdorff intrinsic median topology $\tau_{\mathrm m}(X)$ and its MMC $\nu_m \colon X \hookrightarrow \widehat{X}$. Then 
	the median algebra $\widehat{X}$ also satisfies $(T^m_2)$ and $\tau_{\mathrm m}(\widehat{X})$ is compact Hausdorff and locally convex. Moreover, $\widehat{X}$ is normally supercompact. 	
\end{prop}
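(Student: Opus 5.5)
The plan is to prove $(T^m_2)$ for $\widehat X$ first, and then let Lemma~\ref{l:LeastToplogy}(2) do most of the remaining work.

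\emph{Step 1: $\widehat X\in(T^m_2)$.} Let $\xi\neq\eta$ be points of $\widehat X$. The completion is Hausdorff, and the covers $\mathcal B_{u,v}$, for chain intervals $[u,v]\subseteq X$, form a subbase of $\mathcal U_{\mathrm m}(X)$. Their extensions $\{\widetilde B^u_v,\widetilde B^v_u\}$ are the open covers of $\widehat X$ built in the proof of Theorem~\ref{t:MMC}(2) with $K=\widehat X$. These extensions generate the unique uniformity of $\widehat X$; this is the standard fact that the closures/extensions of a subbase of a precompact uniformity form a subbase of the uniformity of its completion. Hence, by the same star argument as in Lemma~\ref{l:medT2Hausdorff}, there is a single chain interval $[u,v]\subseteq X$ with $\eta\notin\St(\xi,\{\widetilde B^u_v,\widetilde B^v_u\})$.

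By Lemma~\ref{l:IntervalClosure}, $[u,v]_{\widehat X}$ is a chain. The same case analysis as in Lemma~\ref{l:SubbasicStars}, applied to $\phi^{\widehat X}_{u,v}$ on $\widehat X$, then gives $\{\phi^{\widehat X}_{u,v}(\xi),\phi^{\widehat X}_{u,v}(\eta)\}=\{u,v\}$, with $[u,v]_{\widehat X}$ a chain interval of $\widehat X$. This is exactly $(T^m_2)$ for $\widehat X$.

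The subtle point, and the main obstacle, is the claim that a single subbasic cover suffices to separate $\xi$ and $\eta$. I would justify it as follows. Closures of points are separated by a uniform cover of $\widehat X$. A basic refinement of that cover is a finite wedge of extended covers $\widetilde{\mathcal B}_{u_i,v_i}$, and stars distribute over finite wedges. So some single $\widetilde{\mathcal B}_{u_i,v_i}$ already separates $\xi$ from $\eta$. To confirm that the extended covers generate the compact uniformity, I would use that the map $\Phi\colon\widehat X\to\prod[u,v]_{\widehat X}$, assembled from all chain retractions, is continuous. It is injective on $\widehat X$ because the $\phi^X_{u,v}$ extend continuously and $\mathcal U_{\mathrm m}$ is exactly the initial uniformity of these retractions (Theorem~\ref{t:InitialUniformity}). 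Compactness of $\widehat X$ then turns $\Phi$ into a topological embedding.

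\emph{Step 2: the topology.} $\widehat X$ is a compact Hausdorff topological median algebra by Theorem~\ref{t:MMC}(1), and it satisfies $(T^m_2)$ by Step~1. Lemma~\ref{l:LeastToplogy}(2) therefore gives $\tau_{\mathrm m}(\widehat X)$ equal to the compact topology of $\widehat X$; in particular it is compact Hausdorff.

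\emph{Step 3: local convexity and normal supercompactness.} Local convexity follows from Corollary~\ref{c:BranchesSubbaseTauM}(2), applied to $\widehat X$ itself. Normal supercompactness then follows from Remark~\ref{r:supercomp}, since $\widehat X$ is a compact Hausdorff locally convex topological median algebra.
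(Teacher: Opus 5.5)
Your proof is correct in substance, and Steps 2 and 3 are exactly the paper's. Step 1 takes a different route, and the justification you give for its key claim needs to be replaced.

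\textbf{How the paper does Step 1.} The paper shows that the diagonal map $\Delta\colon\widehat X\to\prod_{(u,v)}[u,v]_{\widehat X}$ is injective. For this it identifies the completion of each chain $[u,v]_X$ with $[u,v]_{\widehat X}$, via Proposition~\ref{p:MMCconvex} and Proposition~\ref{p:LOTS_case}(1a). For $a\neq b$ it then picks a coordinate with $p=\phi^{\widehat X}_{u,v}(a)\neq q=\phi^{\widehat X}_{u,v}(b)$. Finally it separates $a,b$ by the subchain $[p,q]_{\widehat X}$, using Lemma~\ref{l:compositionOFgates}(1).

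So the injectivity of your $\Phi$, which you use only as a ``confirmation'', already finishes Step 1 on its own.

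\textbf{What your confirmation actually shows.} That $\Phi$ is an embedding does not give the claim you need. It makes the uniformity of $\widehat X$ initial for the maps into the compact chains $[u,v]_{\widehat X}$. Their two-ray covers have endpoints $c,d$ anywhere in $[u,v]_{\widehat X}$. The pullbacks are therefore the covers $\mathcal B_{c,d}$ of $\widehat X$, not your extended covers with endpoints in $X$. Closures do not work either: for $X=[0,1]$, $u=0$, $v=1$, both closures are $[0,1]$.

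\textbf{Why your claim is still true.} A density argument proves it.
\begin{enumerate}
\item Each $\widetilde{\mathcal B}_{u,v}$ is an open, hence uniform, cover of $\widehat X$ with trace $\mathcal B_{u,v}$.
\item Given a uniform cover $\beta_0$ of $\widehat X$, take an open uniform cover $\beta$ with $\{\overline G:G\in\beta\}\succ\beta_0$.
\item Take a finite wedge $\bigwedge_i\mathcal B_{u_i,v_i}\succ\beta|_X$.
\item Every member $W$ of $\bigwedge_i\widetilde{\mathcal B}_{u_i,v_i}$ is open, and $W\cap X$ is a member of $\bigwedge_i\mathcal B_{u_i,v_i}$. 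So $W\cap X\subseteq G$ for some $G\in\beta$.
\item Since $X$ is dense, $W\subseteq\overline{W\cap X}\subseteq\overline G$.
\end{enumerate}
Hence the extended covers form a subbase of the compact uniformity. Your star-distributivity and Lemma~\ref{l:SubbasicStars}-type case analysis in $\widehat X$ then go through.

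\textbf{Comparison.} With this fix, your route is more elementary than the paper's. It needs neither Proposition~\ref{p:MMCconvex} nor the identification of completions of chain intervals. It also gives a slightly sharper conclusion: points of $\widehat X$ are separated by the extensions $[u,v]_{\widehat X}$ of chain intervals of $X$ itself. The paper's route instead produces an embedding of $\widehat X$ into a product of compact chains, which is useful structural information in its own right.
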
  
\begin{proof}
	Let \(I\) be the set of all ordered pairs \((u,v)\in X^2\) such
	that \([u,v]_X\) is a nontrivial chain interval. For every
	\(i=(u,v)\in I\), put
	\(
	C_i:=[u,v]_X
	\) 
	and denote by 
	\(
	\pi_i^X=\phi_{u,v}\colon X\to C_i
	\) 
	the canonical retraction. By 
	Theorem~\ref{t:InitialUniformity}, the uniformity
	\(\mathcal U_m(X)\) is the initial uniformity generated by the
	family of maps \(\{\pi_i^X\}_{i\in I}\), where every chain \(C_i\)
	carries its interval uniformity.
	
	Since \(C_i\) is a convex subalgebra of \(X\),
	Proposition~\ref{p:MMCconvex} gives
	\(
	\mathcal U_m(C_i)=\mathcal U_m(X)|_{C_i}.
	\) 
	By Proposition~\ref{p:LOTS_case}(1a), the intrinsic uniformity
	\(\mathcal U_m(C_i)\) is precisely the interval uniformity on the
	chain \(C_i\). Therefore, the completion of \(C_i\) is canonically
	identified with its closure in \(\widehat X\). By
	Lemma~\ref{l:IntervalClosure}, this closure is
	\(
	\overline{C_i}^{\,\widehat X}=[u,v]_{\widehat X},
	\) 
	and it is again a chain. 
	Consequently, each retraction
	\( 
	\pi_i^X \colon X\to C_i
	\) 
	extends uniquely to a continuous median map
	\(
	\pi_i^{\widehat X} \colon 
	\widehat X\to  [u,v]_{\widehat X}.
	\) 
	On the other hand, the retraction
	\(
	\phi_{u,v}^{\widehat X} \colon 
	\widehat X\to [u,v]_{\widehat X},
	\ 
	\phi_{u,v}^{\widehat X}(z)=\widehat m(u,z,v),
	\) 
	is continuous and extends \(\phi_{u,v}\) on \(X\). By uniqueness
	of the extension,
	\(
	\pi_i^{\widehat X}=\phi_{u,v}^{\widehat X}.
	\)  
	Consider the diagonal map
	\[
	\Delta_X \colon X\longrightarrow\prod_{i\in I}C_i,
	\qquad
	\Delta_X(z)=(\pi_i^X(z))_{i\in I}.
	\]
	By Theorem~\ref{t:InitialUniformity}, the uniformity induced on
	\(X\) by \(\Delta_X\) is precisely \(\mathcal U_m(X)\). Since
	\(X\in(T^m_2)\), Lemma~\ref{l:medT2Hausdorff} implies that
	\(\mathcal U_m(X)\) is Hausdorff. Hence \(\Delta_X\) is injective
	and is a uniform embedding onto its image. 
	Passing to Hausdorff completions, we obtain an embedding
	\[
	\Delta \colon \widehat X\longrightarrow
	\prod_{i=(u,v)\in I}[u,v]_{\widehat X},
	\qquad
	\Delta(z)=(\pi_i^{\widehat X}(z))_{i\in I}.
	\]
	In particular, \(\Delta\) is injective.  
	Let \(a\neq b\) in \(\widehat X\). Since \(\Delta\) is injective,
	there exists \(i=(u,v)\in I\) such that
	$
	p:=\pi_i^{\widehat X}(a)
	\neq
	q:=\pi_i^{\widehat X}(b).
	$
	The points \(p,q\) belong to the chain
	\([u,v]_{\widehat X}\). Hence
	\([p,q]_{\widehat X}\) is a nontrivial chain interval and
	$
	[p,q]_{\widehat X}\subseteq [u,v]_{\widehat X}.
	$
	By Lemma~\ref{l:compositionOFgates}(1), applied to these nested
	intervals,
	\(
	\phi_{p,q}^{\widehat X}
	\circ
	\phi_{u,v}^{\widehat X}
	=
	\phi_{p,q}^{\widehat X}.
	\) 
	Since
	\(\pi_i^{\widehat X}=\phi_{u,v}^{\widehat X}\), it follows that
	$
	\phi_{p,q}^{\widehat X}(a)
	=
	\phi_{p,q}^{\widehat X}
	\bigl(\phi_{u,v}^{\widehat X}(a)\bigr)
	=
	\phi_{p,q}^{\widehat X}(p)
	=
	p
	$
	and, similarly,
	$
	\phi_{p,q}^{\widehat X}(b)
	=
	\phi_{p,q}^{\widehat X}
	\bigl(\phi_{u,v}^{\widehat X}(b)\bigr)
	=
	\phi_{p,q}^{\widehat X}(q)
	=
	q.
	$ 
	
	Thus the chain interval \([p,q]_{\widehat X}\) separates \(a\)
	and \(b\) in the sense of \((T^m_2)\). Therefore
	\(
	\widehat X\in(T^m_2).
	\)  
	By Theorem~\ref{t:MMC}, \(\widehat X\) is a compact Hausdorff
	topological median algebra. Since
	\(\widehat X\in(T^m_2)\),
	Lemma~\ref{l:LeastToplogy} shows that
	$\tau_{\mathrm m}(\widehat X)$ coincides with the compact topology of
	$\widehat X$.
 Hence \(\tau_{\mathrm m}(\widehat X)\) is compact
	Hausdorff.
	
	By Corollary~\ref{c:BranchesSubbaseTauM}(2), the intrinsic topology
	\(\tau_{\mathrm m}(\widehat X)\) is locally convex. Therefore
	Remark~\ref{r:supercomp} implies that \(\widehat X\) is normally
	supercompact.
\end{proof}

\subsection*{Generalized end-compactifications of median pretrees} 

We now specialize to rank-one median algebras and compare the MMC
with end-compactifications. As part of this comparison, we
first identify the intrinsic topology $\tau_{\mathrm m}$ with the shadow topology. 

Very informative discussions of end-compactifications can be found in the works of Bowditch \cite{B} for real trees and (in a broader context of pretrees) Malyutin \cite{Mal14}.  
Recall that a median pretree $X$ is said to be \textit{weakly complete} (see \cite{Mal14}) if every bounded arc (i.e., every nonempty convex linear subset $C \subseteq [u,v] \subseteq X$) is necessarily one of the regular intervals of the form: $[a,b], (a,b], [a,b), (a,b)$. As proved in \cite[Section 11]{Mal14}, for every weakly complete median pretree $X$, there exists a proper median compactification $e \colon X \to X^e:=X\cup \Ends(X)$, which is called the \textit{end-compactification} of $X$. This provides a broad generalization of several classical versions of end-compactifications.   

In the following result, we show that 
MMC is always well defined for every median pretree $X$. If, in addition, $X$ is weakly complete then MMC coincides with the classical end-compactification.  
   
\begin{thm}[End-compactifications of median pretrees] \label{t:MedPretreeMMC} 
	Let $X$ be a median pretree (i.e., a rank-1 median algebra), and let $\tau_s$ be its shadow topology.
	Then:
	\begin{enumerate} 
		\item 
		$\tau_{\mathrm{m}}$ is Hausdorff (i.e. \(X\in(T^m_2)\)) and MMC is well defined;  
			\item $\tau_{\mathrm{m}}=\tau_s$; 
		\item If $X$ is weakly complete, then the MMC $\nu_m \colon X \hookrightarrow \widehat{X}$ coincides with the end-compactification $e \colon X \to X^e:=X\cup \Ends(X)$ in the sense of \cite{Mal14}. 
	\end{enumerate} 
\end{thm}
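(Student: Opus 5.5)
In a median pretree every interval $[u,v]$ is a chain, because the rank is one and $\le_u$ is a linear order on $[u,v]$. Hence every pair $u\neq v$ yields a subbasic cover $\mathcal B_{u,v}$ of $\mathcal U_{\mathrm m}$. The plan has three steps: establish $(T^m_2)$, identify the subbases of $\tau_{\mathrm m}$ and $\tau_s$, and compare compactifications through the universal property of Theorem~\ref{t:MMC}.

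For (1), let $x\neq y$. The interval $[x,y]$ is a nontrivial chain, and $\phi_{x,y}(x)=x$, $\phi_{x,y}(y)=y$. So $\{\phi_{x,y}(x),\phi_{x,y}(y)\}=\{x,y\}$, which is exactly Definition~\ref{d:medT2} with $(u,v)=(x,y)$. By Lemma~\ref{l:medT2Hausdorff}, $\mathcal U_{\mathrm m}$ is Hausdorff, and Theorem~\ref{t:MMC} then shows that the MMC is well defined.

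For (2), I will compare subbases. The median shadows and branches of Definition~\ref{d:MedianBranch} agree with the pretree ones: in a median pretree the betweenness is $x\in[a,b]\iff m(a,x,b)=x$, and both definitions say $u\in[x,v]$. By Corollary~\ref{c:BranchesSubbaseTauM}(1), $\tau_{\mathrm m}$ has as open subbase all $B^u_v$ with $[u,v]$ a nontrivial chain interval. Since every interval is a chain here, this is all branches $B^v_u$ with $u\neq v$, which is exactly Malyutin's subbase for $\tau_s$. Therefore $\tau_{\mathrm m}=\tau_s$.

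For (3), assume $X$ is weakly complete, and let $e\colon X\to X^e$ be the end-compactification from \cite[Section~11]{Mal14}. I will use that it is a proper median compactification of $(X,\tau_s)=(X,\tau_{\mathrm m})$. By Theorem~\ref{t:MMC}(2) there is a continuous surjective median map $f\colon X^e\to\widehat X$ extending the identity on $X$. It remains to show $f$ is injective. Equivalently, by Proposition~\ref{p:Median-compactif}, the uniformity induced from $X^e$ is contained in $\mathcal U_{\mathrm m}$.

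I will use that $X^e$ is itself a (compact) median pretree, either as part of Malyutin's construction or via Fact~\ref{f:facts2}(1), since the rank of a dense subalgebra equals the rank of the whole. Its topology is its shadow topology, with subbasic open sets the branches $\widetilde B^v_u$ for $u\neq v$ in $X^e$. Two distinct points $a,b\in X^e$ can therefore be separated by a chain interval $[p,q]_{X^e}$ in the sense of $(T^m_2)$. The main obstacle is to move this separation into $X$. I must find $u\neq v$ in $X$ with $\{\phi_{u,v}(a),\phi_{u,v}(b)\}=\{u,v\}$; then $f(a)\neq f(b)$, because $\phi_{u,v}$ is continuous on $\widehat X$ and $f$ commutes with it.

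To find $u,v$, I first try to use density of $X$ in $X^e$ together with openness of the branches. Pick $u$ in the open set $\widetilde B^{q}_{p}\cap\widetilde B^{b}_{a}$-type region near $a$, and $v$ similarly near $b$, so that the segment $[u,v]$ "passes between" $a$ and $b$. Weak completeness should guarantee that ends are approached by points of $X$ along arcs. In the rank-one setting this reduces to a case analysis on the linear order of the arc through $a$ and $b$: the cases are whether $a$ and $b$ lie in $X$ or are ends, and whether they are adjacent. Alternatively, I would quote Malyutin's characterization of $X^e$ as the completion of $X$ in the uniformity generated by the two-member branch covers, which is $\mathcal U_{\mathrm m}$. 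By uniqueness of completions this gives $X^e\cong\widehat X$ directly.
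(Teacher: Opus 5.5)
Parts (1) and (2) are correct and essentially the paper's argument. For (2) you cite Corollary~\ref{c:BranchesSubbaseTauM}(1) directly, while the paper checks the two inclusions by hand: stars of subbasic covers are $X$ or branches, and $B^v_u=\phi_{u,v}^{-1}((u,v])$ is $\tau_{\mathrm m}$-open by Proposition~\ref{p:LOTS_case}(2). Either way works.

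\textbf{The gap is in (3).} You correctly reduce to injectivity of $f\colon X^e\to\widehat X$. You also correctly observe that it suffices, for $a\neq b$ in $X^e$, to find $u\neq v$ in $X$ with $\phi_{u,v}(a)=u$ and $\phi_{u,v}(b)=v$. But you never produce such $u,v$, and the tools you propose cannot produce them. Density of $X$, openness of branches, and the fact that $X^e$ is a compact median pretree carrying its shadow topology are all also true of the paper's split-cut compactification $K_\alpha$ of $\mathbb Q\cap[0,1]$. There $\alpha^-<\alpha^+$ are adjacent points outside $X$. For any $u<v$ in $X$, the retraction $\phi_{u,v}$ either sends $\alpha^\pm$ to the same endpoint or fixes both of them. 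So no chain interval with endpoints in $X$ separates them, and "picking $u$ near $a$ and $v$ near $b$" fails. Since $\mathbb Q\cap[0,1]$ is not weakly complete, weak completeness must enter through the specific structure of $X^e$, not through density.

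\textbf{The missing fact.} What you need is: for distinct $a,b\in X^e$, the interval $[a,b]_{X^e}$ contains at least two points of $X$. The paper takes this from \cite[Lemma~11.5]{Mal14}. If $a\in X$ and $b$ is an end, then $[a,b)\cap X$ is a ray. If both are ends, then $(a,b)\cap X$ is a line.

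\textbf{How either argument then finishes.} In your version, choose $u<_a v$ in $[a,b]\cap X$. Then in the chain $[a,b]$ one has $\phi_{u,v}(a)=u$ and $\phi_{u,v}(b)=v$, so $f(a)\neq f(b)$. In the paper's version, $f(a)=f(b)$ forces $f$ to be constant on $[a,b]$, hence to identify two distinct points of $X$, contradicting $f|_X=\mathrm{id}$.

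Your proposed "case analysis on whether $a,b$ are ends and whether they are adjacent" has the right shape. However, ruling out adjacency (the $K_\alpha$ phenomenon) is exactly the content left unproved. Your fallback, a characterization of $X^e$ as the completion of $X$ in the branch-cover uniformity, is offered without any reference. It amounts to citing statement (3) itself.
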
    
	\begin{proof}
		(1) Since \(X\) has rank \(1\), every interval \([u,v]\) is a chain. Hence, for
		\(u\neq v\),
		\[
		\{\phi_{u,v}(u),\phi_{u,v}(v)\}=\{u,v\}.
		\]
		Thus \(X\in(T^m_2)\), and \(\mathcal U_m\) is Hausdorff.
		
		(2) 
		We prove that \(\tau_{\mathrm m}=\tau_s\). First, every \(\tau_{\mathrm m}\)-basic neighborhood is
		a finite intersection of sets of the form
		$
		\operatorname{St}(x,\mathcal B_{u,v}).
		$  
		By Lemma~\ref{l:SubbasicStars}, each such star is either \(X\) or one of the 
		branches \(B^u_v,B^v_u\). Since the branches form a subbase for the shadow
		topology (see Section \ref{s:StarMedUn}), every \(\tau_{\mathrm m}\)-basic neighborhood is \(\tau_s\)-open. Hence
		\(\tau_{\mathrm m}\subseteq\tau_s\).  
		
		Conversely, let \(B^v_u\) be a shadow branch. Since \([u,v]\) is a chain,
		Proposition~\ref{p:LOTS_case}(2) shows that
		\[
		\phi_{u,v} \colon (X,\tau_{\mathrm m})\to([u,v],\tau_{\mathrm m}([u,v]))
		\]
		is continuous. By Proposition~\ref{p:LOTS_case}(1), the topology
		\(\tau_{\mathrm m}([u,v])\) is the ordinary interval topology on the chain \([u,v]\).
		Now
		\[
		B^v_u=\phi_{u,v}^{-1}([u,v]\setminus\{u\})
		=\phi_{u,v}^{-1}((u,v]),
		\]
		and \((u,v]\) is open in the interval topology of \([u,v]\). Hence \(B^v_u\)
		is \(\tau_{\mathrm m}\)-open. 
		Thus every subbasic branch of the shadow topology is
		$\tau_{\mathrm m}$-open, and therefore $\tau_s\subseteq\tau_{\mathrm m}$.
			
\sk
	(3) Assume that \(X\) is weakly complete. By
	\cite[Corollary~11.10]{Mal14}, the end-compactification
	\(e:X\to X^e=X\cup\operatorname{Ends}(X)\) is a proper compact median
	compactification of \((X,\tau_s)\). By part (2), \(\tau_s=\tau_{\mathrm m}\). Hence,
	by the minimality of the MMC, there exists a continuous surjective median
	homomorphism
	$
	q:X^e\to\widehat X
	$
	such that \(q|_X=\operatorname{id}_X\).
	
	We show that \(q\) is injective. Suppose \(u\neq v\) in \(X^e\) and
	\(q(u)=q(v)\). Since \(q\) is median preserving,
	\[
	q([u,v])\subseteq[q(u),q(v)]=\{q(u)\}.
	\]
	Thus \(q\) is constant on \([u,v]\).
	
	If \(u,v\in X\), this contradicts \(q|_X=\operatorname{id}_X\). If
	\(u\in X\) and \(v\in\operatorname{Ends}(X)\), then by
	\cite[Lemma~11.5(1)]{Mal14}, the interval \([u,v)\cap X\) is a ray in \(X\);
	hence it contains distinct points of \(X\), which \(q\) would identify. This
	again contradicts \(q|_X=\operatorname{id}_X\). Finally, if
	\(u,v\in\operatorname{Ends}(X)\), then by
	\cite[Lemma~11.5(2)]{Mal14}, the interval \((u,v)\cap X\) is a line in \(X\);
	again \(q\) would identify distinct points of \(X\), a contradiction.
	
	Therefore \(q\) is injective. Since \(X^e\) and \(\widehat X\) are compact
	Hausdorff spaces, \(q\) is a homeomorphism. 
Since $q$ is median preserving and fixes $X$, it is an isomorphism of median compactifications.	     
\end{proof}

The topological dimension of the end-compactification \(\widehat X\) in
Theorem~\ref{t:MedPretreeMMC} is at most \(1\), by Theorem~\ref{t:FiniteRankHausd}. Moreover, if \((X,\tau_{\mathrm m})\) is
separable and metrizable, then \(\widehat X\) is metrizable by Theorem~\ref{t:metrizableMMC}. 

\section{Finite rank case and algebras with finite intervals} \label{s:FiniteAndDiscrete}

\subsection*{Finite rank median algebras} 
 The following definition was inspired by \cite[Lemma 2.4]{B14}. 
 
 \begin{defin}\label{d:chain-solvable}
 	A median algebra \(X\) is called \emph{chain-solvable} if for every distinct
 	pair \(x,y\in X\) there exist \(c,d\in[x,y]\) such that \(c<_x d\) and
 	\([c,d]\) is a (nontrivial) \textbf{chain} interval.
 \end{defin}
 
\begin{thm} \label{thm:chain_solvable}
	Let $X$ be a median algebra. Then $X\in(T^m_2)$ if and only if $X$ is chain-solvable.
\end{thm}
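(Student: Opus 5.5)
The proof splits into two directions, and I would handle them separately.

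For ($\Leftarrow$), assume $X$ is chain-solvable and let $x\neq y$. Choose $c,d\in[x,y]$ with $c<_x d$ and $[c,d]$ a nontrivial chain interval. I will show that $\phi_{c,d}(x)=c$ and $\phi_{c,d}(y)=d$, which is exactly the $(T^m_2)$ condition of Definition~\ref{d:medT2}.

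Since $c\le_x d$, we have $c\in[x,d]$. Hence $m(x,c,d)=c$, that is, $\phi_{c,d}(x)=c$.

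For $y$, note that $c\le_x d$ in $[x,y]$ means $\le_y$ reverses the order, so $d\le_y c$. Thus $d\in[y,c]$, and so $m(c,y,d)=d$, that is, $\phi_{c,d}(y)=d$.

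If one prefers to avoid the order facts, this is a direct median computation. Use $c=m(x,c,d)$, coming from $c\in[x,d]$, together with $c,d\in[x,y]$. Then apply axiom (M3) in the form $m(m(x,d,y),d,c)=m(x,d,m(y,d,c))$, together with $m(x,d,y)=d$.

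For ($\Rightarrow$), assume $X\in(T^m_2)$ and let $x\neq y$. Pick a chain interval $[u,v]$ with $\phi_{u,v}(x)=u$ and $\phi_{u,v}(y)=v$, after swapping names if needed. Put $c:=\phi_{x,y}(u)$ and $d:=\phi_{x,y}(v)$; both lie in $[x,y]$.

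The plan is to apply Lemma~\ref{l:compositionOFgates}(2) with the roles of the two intervals interchanged. Its hypothesis, read for the pair $([u,v],[x,y])$, is that $\phi_{u,v}(x)=u$ and $\phi_{u,v}(y)=v$, and this holds. The lemma then gives that $\phi_{x,y}|_{[u,v]}\colon[u,v]\to[c,d]$ is a median isomorphism, with $c$ and $d$ the images of $u$ and $v$. Consequently $[c,d]$ is a chain, being isomorphic to the chain $[u,v]$, and $c\neq d$ because $u\neq v$. Lemma~\ref{l:HomImageChain} would also give that $[c,d]$ is a chain.

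It remains to check that $c<_x d$, i.e.\ $c\in[x,d]$, i.e.\ $m(x,c,d)=c$. The same lemma yields $\phi_{u,v}(c)=u$ and $\phi_{u,v}(d)=v$. I would argue as follows. By Lemma~\ref{l:compositionOFgates}(1), $\phi_{c,d}=\phi_{x,y}\circ\phi_{u,v}$, or the reverse composition. Then
\[
\phi_{c,d}(x)=\phi_{x,y}(\phi_{u,v}(x))=\phi_{x,y}(u)=c .
\]
This says $m(c,x,d)=c$, hence $c\in[x,d]$, which is exactly $c\le_x d$; strictness follows from $c\neq d$.

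I expect the main obstacle to be the bookkeeping of which composition $\phi_{u,v}\circ\phi_{x,y}$ versus $\phi_{x,y}\circ\phi_{u,v}$ equals $\phi_{c,d}$. Lemma~\ref{l:compositionOFgates}(1), applied with the intervals swapped, identifies $\phi_{x,y}\circ\phi_{u,v}$ with the retraction onto $\phi_{x,y}([u,v])=[c,d]$. That is the needed identity, and it gives $\phi_{c,d}(x)=c$ directly.
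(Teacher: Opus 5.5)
Your proposal is correct and follows essentially the same route as the paper. The ($\Leftarrow$) direction uses $c\in[x,d]$ and $d\in[c,y]$ to get $\phi_{c,d}(x)=c$ and $\phi_{c,d}(y)=d$; the ($\Rightarrow$) direction projects the separating chain interval onto the interval between the two points, uses Lemma~\ref{l:compositionOFgates}(2) to see that the image $[c,d]$ is a nontrivial chain, and uses Lemma~\ref{l:compositionOFgates}(1) (with the intervals interchanged, exactly as you resolve it) to obtain $\phi_{c,d}(x)=c$ and hence $c<_x d$, the only difference being that the paper names the separated points $u,v$ and the chain interval $[x,y]$.
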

\begin{proof}
	Assume first that \(X\) is chain-solvable, and let \(x\neq y\) in \(X\).
	Choose \(c,d\in[x,y]\) such that \(c<_x d\) and \([c,d]\) is a nontrivial
	chain interval. Since \(c<_x d\) inside \([x,y]\), we have
	\(c\in[x,d]\) and \(d\in[c,y]\). Hence
	\[
	\phi_{c,d}(x)=m(c,x,d)=c,\qquad
	\phi_{c,d}(y)=m(c,y,d)=d.
	\]
	Thus the chain interval \([c,d]\) separates \(x\) and \(y\) in the sense of
	Definition~\ref{d:medT2}. Therefore \(X\in(T^m_2)\).
	  
		Conversely, assume that \(X\in(T^m_2)\), and let \(u\neq v\) in \(X\).
	There exists a nontrivial chain interval \([x,y]\) such that
	\(
	\{\phi_{x,y}(u),\phi_{x,y}(v)\}=\{x,y\}.
	\) 
	Put
	\[
	x':=\phi_{x,y}(u),\qquad y':=\phi_{x,y}(v).
	\]
	Then \(\{x',y'\}=\{x,y\}\), and hence
	\([x',y']=[x,y]\) is a nontrivial chain interval. Moreover,
	\[
	\phi_{x',y'}(u)=x',
	\qquad
	\phi_{x',y'}(v)=y'.
	\]
	
	Define
	\(
	c:=\phi_{u,v}(x'),\ 
	d:=\phi_{u,v}(y').
	\) 
	By Lemma~\ref{l:compositionOFgates}(2), the restriction of
	\(\phi_{u,v}\) to \([x',y']\) is a median isomorphism from
	\([x',y']\) onto \([c,d]\). Hence \([c,d]\) is a nontrivial
	chain interval.
	
	Furthermore, Lemma~\ref{l:compositionOFgates}(1) gives
	\(
	\phi_{c,d}
	=
	\phi_{u,v}\circ\phi_{x',y'}.
	\) 
	Therefore
	\[
	\phi_{c,d}(u)=c,
	\qquad
	\phi_{c,d}(v)=d.
	\]
	In particular, \(c\in[u,d]\), and hence \(c\leq_u d\).
	Since \(c\neq d\), we obtain \(c<_u d\). Thus \(X\) is
	chain-solvable.    
\end{proof}

\begin{thm} \label{t:FiniteRankHausd} 
For every finite-rank median algebra \(X\), the intrinsic uniformity
\(\mathcal U_m\) is Hausdorff. Moreover, its MMC \(\widehat X\) satisfies 
$\operatorname{rank}(\widehat X)=\operatorname{rank}(X)$ and 
$\dim \widehat X\leq \operatorname{rank}(X).$
\end{thm}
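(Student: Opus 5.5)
By Theorem~\ref{thm:chain_solvable} and Lemma~\ref{l:medT2Hausdorff}, the Hausdorff property of \(\mathcal U_m\) reduces to showing that every finite-rank median algebra is chain-solvable. Fix \(x\neq y\) and work inside the interval \([x,y]\), ordered by \(\leq_x\). The plan is to find \(c<_x d\) in \([x,y]\) with \([c,d]\) a chain. I would first try a descending procedure. If \([x,y]\) is itself a chain, we are done. Otherwise \([x,y]\) contains two \(\leq_x\)-incomparable points, and hence a square: for incomparable \(p,q\), the points \(m(x,p,q)\), \(p\), \(q\), \(m(y,p,q)\) form a copy of \(\{0,1\}^2\). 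Passing to a subinterval \([c_1,d_1]\subsetneq[x,y]\) should then decrease some rank-like invariant.

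A better-controlled route, following the hint from \cite[Lemma 2.4]{B14}, is to use walls. For \(x\neq y\), choose a wall (halfspace pair) separating \(x\) from \(y\); this is possible in any median algebra by the standard separation theorem. Suppose \(\operatorname{rank}(X)=n<\infty\). Among the walls separating \(x\) from \(y\), I would look for a pair of points \(c<_x d\) in \([x,y]\) such that every wall separating \(c\) from \(d\) is nested with every other such wall. Pairwise nestedness of the separating walls forces \([c,d]\) to be a chain. Indeed, an incomparable pair inside \([c,d]\) would produce two crossing walls separating \(c\) from \(d\): the two coordinates of the square found above.

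To find such a pair, I would argue by induction on the maximal number \(k\leq n\) of pairwise crossing walls separating the endpoints. Given \([x,y]\) with some pair of crossing separating walls \(h_1,h_2\), take the gate of \(x\) in \(h_1\cap\) (the side of \(h_2\) containing \(y\)) and similar projections, and pass to a subinterval \([x',y']\) separated only by walls that cross neither \(h_1\) nor \(h_2\) in a specified direction. The \textbf{main obstacle} is to make this decrease strict without the interval collapsing to a point. Here finite rank is essential: a Helly/Dilworth-type bound caps crossing families at \(n\), so after at most \(n\) steps the surviving separating walls are pairwise nested. Nonemptiness should follow because the walls separating \(x,y\) and crossing a fixed \(h_1\) cannot exhaust all separating walls on both sides. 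I would try to take \(d\) to be the gate-projection of \(y\) to a halfspace minimal with respect to inclusion among those chosen. In the infinite-interval case, where minimal halfspaces may not exist, I would instead use the \(n\)-step descent directly on subintervals.

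For the rank and dimension claims, rank is immediate: \(X\) is dense in the Hausdorff tma \(\widehat X\) (Theorem~\ref{t:MMC}), so Fact~\ref{f:facts2}(1) gives \(\operatorname{rank}(\widehat X)=\operatorname{rank}(X)\). For \(\dim\widehat X\le\operatorname{rank}(X)\), I would cite Bowditch's result that a compact Hausdorff (locally convex) topological median algebra of rank \(n\) has covering dimension at most \(n\) \cite{BowditchMedian}. Local convexity holds by Proposition~\ref{p:MMCisMedHausd}, or alternatively because compact finite-rank median algebras are locally convex.
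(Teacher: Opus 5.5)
Your reduction to chain-solvability via Theorem~\ref{thm:chain_solvable} matches the paper. So does your treatment of the rank and dimension claims, via Fact~\ref{f:facts2}(1) and Bowditch's bound $\dim K\le\operatorname{rank}K$ for compact median algebras.

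The gap is the one step that carries all the content: you never actually produce $c<_x d$ in $[x,y]$ with $[c,d]$ a chain. The wall-crossing induction is only an outline. Its decisive step, a strict decrease of the invariant without the interval collapsing, is exactly what you call the main obstacle and then leave open. The remarks on nonemptiness and on minimal halfspaces are not arguments. An unspecified descent cannot work either:
\begin{itemize}
\item a proper subinterval need not have smaller rank, since every nondegenerate subrectangle of $[0,1]^2$ has rank $2$;
\item read literally, keeping only walls that cross neither $h_1$ nor $h_2$ already collapses in $[0,1]^2$, because every wall separating $(0,0)$ from $(1,1)$ crosses one of two crossing coordinate walls.
\end{itemize}
The paper does not reprove this step. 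It applies \cite[Lemma~2.4]{B14} to the finite-rank interval $[x,y]$, which directly gives $c<_x d$ in $[x,y]$ with $\operatorname{rank}[c,d]=1$, hence $[c,d]$ a chain.

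If you want a self-contained argument, the missing idea is that an interval splits as a product. Work in the distributive lattice $([x,y],\le_x)$, take $\le_x$-incomparable $p,q$, and put $r=m(x,p,q)=p\wedge q$.
\begin{itemize}
\item The median interval $[p,q]$ equals the lattice interval $[p\wedge q,p\vee q]$.
\item The map $z\mapsto(m(r,p,z),m(r,q,z))$ is a median isomorphism $[p,q]\to[r,p]\times[r,q]$, and both factors are nontrivial.
\item By additivity of rank, $\operatorname{rank}[r,p]\le\operatorname{rank}[x,y]-1$.
\item Moreover $[r,p]\subseteq[x,y]$, $r<_x p$, and $\le_r$ on $[r,p]$ is the restriction of $\le_x$.
\end{itemize}
Induction on the rank of the interval then takes at most $\operatorname{rank}(X)$ steps. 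Since nontrivial rank-one intervals are chains, it yields the required $[c,d]$.

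In your wall language: every wall separating $r$ from $p$ crosses every wall separating $r$ from $q$, and all of them separate $x$ from $y$. So any crossing family for $(r,p)$ extends by one wall to a crossing family for $(x,y)$. The drop in crossing number comes from the surviving walls all crossing a fixed separating wall, not from avoiding $h_1$ and $h_2$.
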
 
\begin{proof}
	Let \(x\neq y\in X\). We apply Bowditch's 
	 \cite[Lemma 2.4]{B14} to the finite-rank interval
	$
	\Delta=[x,y],
	$
	with \(e^-=x\) and \(e^+=y\). Bowditch defines on \(\Delta\) the natural
	lattice order induced by the endpoints \(e^-,e^+\); in the present notation
	this is exactly the order \(\leq_x\) on \([x,y]\).
	
	Taking \(a=x\) and \(b=y\) in Bowditch's lemma, we obtain points
	\(c,d\in[x,y]\) such that \(c<_x d\) and \([c,d]\) has rank \(1\). 
	Since every rank-one median algebra is a median pretree, the interval $[c,d]$ is a (nontrivial) chain. 
	Thus \(X\) is chain-solvable. By Theorem~\ref{thm:chain_solvable},
	\(X\in(T^m_2)\), and therefore \(\mathcal U_m\) is Hausdorff.
	
	The dimension estimate follows from \cite[Lemma~12.3.3]{BowditchMedian},
	which gives \(\dim K\leq \operatorname{rank}(K)\) for compact median
	algebras \(K\). Applying this to \(K=\widehat X\), and using
	Fact~\ref{f:facts2}.1 to get
	$
	\operatorname{rank}(\widehat X)=\operatorname{rank}(X),
	$
	we obtain
	$
	\dim \widehat X\leq \operatorname{rank}(\widehat X)
	=\operatorname{rank}(X).
	$
\end{proof}

\sk 
\subsection*{Algebraically discrete median algebras} 

A median algebra $X$ is said to be (algebraically) 
 \textit{discrete} if every interval $[u,v]$ in $X$ is finite, \cite{Roller,BowditchMedian}. Let us say shortly: \textit{a-discrete}. 

\begin{thm} \label{t:DISCRETEisCV}
	For every a-discrete median algebra, the intrinsic uniformity \(\mathcal U_{\mathrm m}\) is Hausdorff. 
\end{thm}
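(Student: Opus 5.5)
By Theorem~\ref{thm:chain_solvable}, it suffices to show that every a-discrete median algebra $X$ is chain-solvable; Lemma~\ref{l:medT2Hausdorff} then yields that $\mathcal U_{\mathrm m}$ is Hausdorff. So fix $x\neq y$ in $X$. We must find $c<_x d$ in $[x,y]$ such that $[c,d]$ is a nontrivial chain interval. The natural candidate is an adjacent pair, since an interval $\{c,d\}$ with two points is trivially a chain.

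The construction uses finiteness. The interval $[x,y]$ is finite and contains $y\neq x$, so the set $[x,y]\setminus\{x\}$ is nonempty and finite. Choose $d$ in this set that is $\leq_x$-minimal. This is possible because $\leq_x$ is a partial order on the finite set $[x,y]$. Put $c:=x$. Then $c<_x d$, since $x\in[x,d]$ and $d\neq x$.

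It remains to check that $[c,d]=[x,d]=\{x,d\}$. Take $z\in[x,d]$. Since $d\in[x,y]$ and intervals are convex, $[x,d]\subseteq[x,y]$, so $z\in[x,y]$. Also $z\in[x,d]$ means exactly $z\leq_x d$. By the minimality of $d$ among elements of $[x,y]$ different from $x$, we get $z=x$ or $z=d$. Hence $[x,d]=\{x,d\}$. The two-point interval is a nontrivial chain: its median is conservative and it has no $2$-cube, so Remark~\ref{r:linear} applies, and in any case any two-point median algebra is linear.

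The only point needing care is that $[x,d]\subseteq[x,y]$ whenever $d\in[x,y]$. This is the standard convexity of intervals in median algebras, and it is already used implicitly in Lemma~\ref{lem:branch_equiv}. The rest is immediate, so I do not expect a serious obstacle.

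As a direct alternative, Lemma~\ref{l:SubbasicStars}(1),(2) show that $\phi_{x,d}(x)=x$ and $\phi_{x,d}(y)=m(x,y,d)=d$. So $(T^m_2)$ also holds straight from Definition~\ref{d:medT2}.
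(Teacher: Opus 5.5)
Your proof is correct and follows essentially the paper's route: finiteness of $[x,y]$ produces an adjacent pair $c<_x d$ inside $[x,y]$, so $X$ is chain-solvable, and Theorem~\ref{thm:chain_solvable} then gives $(T^m_2)$ and hence Hausdorffness. The paper takes any consecutive pair of a maximal chain in $([x,y],\leq_x)$, whereas you take the first step $c=x$ (a $\leq_x$-minimal element of $[x,y]\setminus\{x\}$), which makes the check $[c,d]=\{c,d\}$ immediate from the definition of $\leq_x$.
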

\begin{proof} 
	Since \([x,y]\) is finite, we can choose a maximal chain in the ordered interval
	\(([x,y],\leq_x)\) from \(x\) to \(y\). Two consecutive elements \(c<_x d\)
	in this maximal chain satisfy \([c,d]=\{c,d\}\); indeed, any point of
	\([c,d]\) would lie between \(c\) and \(d\) in \(([x,y],\leq_x)\). Thus
	\([c,d]\) is a two-point chain interval. Hence \(X\) is chain-solvable, and
	Theorem~\ref{thm:chain_solvable} applies. 
\end{proof}

\begin{remark} \label{r:a-discrete examples} 
Every median graph is a-discrete as a median algebra, and hence satisfies $(T^m_2)$ by Theorem~\ref{t:DISCRETEisCV}.
Conversely, every a-discrete median algebra is the vertex set of a median graph. Equivalently, algebraically discrete median
algebras are precisely the $0$-skeleta of CAT(0) cube
complexes under the standard cubical realization; see
\cite[Theorem~10.3]{Roller}. For the foundational cubulation
and hyperplane theory, see also Sageev \cite{Sageev95}.
\end{remark}

In any median algebra $X$ the set $\mathcal{H}(X)$ of all halfspaces separate the points. The diagonal map 
$$
\iota \colon X \to \{0,1\}^{\mathcal{H}(X)}, \ x \mapsto (\chi_H(x))_{H \in \mathcal{H}(X)} 
$$
is an injective MP map. Passing to the closure $X^R:=cl(\iota(X))$ 
we get the \textit{Roller compactification} $\iota \colon X \to X^R$ equivalently describable as the 
space of ultrafilters on $\mathcal{H}(X)$, where each point $\xi \in X^R$ corresponds to a maximal collection of pairwise-intersecting halfspaces containing exactly one side of every wall. 
It has many applications.   
See \cite{Roller,Fioravanti20,BowditchMedian} for details and alternative definitions. 
 
\begin{thm} \label{thm:unique_compactification}
	Let $X$ be an a-discrete median algebra. 
	Then, for the topological median algebra \((X,\tau_{\mathrm m})\), the MMC
	\(X\to \widehat{X}\) is the unique proper median compactification, and it is
	canonically isomorphic to the classical Roller compactification \(X\to X^R\). 
\end{thm}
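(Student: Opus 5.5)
Let $X$ be a-discrete. By Theorem~\ref{t:DISCRETEisCV}, $X\in(T^m_2)$, so by Theorem~\ref{t:MMC} the MMC $\widehat X$ exists and is the minimal proper median compactification of $(X,\tau_{\mathrm m})$. The plan has three steps. First, show that $\tau_{\mathrm m}$ is discrete. Second, identify $\mathcal U_{\mathrm m}$ with the uniformity induced from the Roller embedding. Third, show that every proper median compactification $K$ of $(X,\tau_{\mathrm m})$ is dominated by $\widehat X$; combined with minimality, this forces $K\cong\widehat X$.

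\emph{Step 1 (chain intervals are edges, branches are halfspaces).} A chain interval $[u,v]$ in an a-discrete algebra is finite. By Lemma~\ref{lem:branch_equiv}, $\mathcal B_{u,v}=\mathcal B_{u,u'}$ where $u'$ is the successor of $u$ in $([u,v],\leq_u)$. So the subbase may be taken to consist of the covers $\mathcal B_{c,d}$ with $c,d$ adjacent, and each such cover is a wall $\{B^c_d,B^d_c\}$ by Lemma~\ref{l:SubbasicStars}(4) and Lemma~\ref{l:ConvexFibers}. Conversely, every wall $\{H,H^c\}$ arises in this way: choose $c\in H$, $d\in H^c$; inside the finite interval $[c,d]$ take a maximal chain from $c$ to $d$, and let $c'\in H$, $d'\in H^c$ be consecutive elements of it. Then $[c',d']=\{c',d'\}$, and one checks using convexity of $H$ and $H^c$ that $B^{c'}_{d'}=H$ (the gate $\phi_{c',d'}$ sends $H$ to $c'$ and $H^c$ to $d'$). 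Hence $\mathcal U_{\mathrm m}$ is exactly the initial uniformity of the maps $\chi_H\colon X\to\{0,1\}$, $H\in\mathcal H(X)$. That is, $\mathcal U_{\mathrm m}$ is the uniformity induced by $\iota\colon X\to\{0,1\}^{\mathcal H(X)}$, and therefore $\widehat X\cong X^R$ canonically, compatibly with medians by Proposition~\ref{p:Median-compactif}.

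\emph{Step 2 (discreteness of $\tau_{\mathrm m}$, needed so that $X^R$ is proper and for uniqueness).} For $x\in X$, fix $y\neq x$. The set $\{x\}$ should be the intersection, over the finitely many neighbours $d$ of $x$ in $[x,y]$ (or in general over all halfspaces containing $x$ whose walls are adjacent to $x$), of the branches $B^d_x$. A cleaner route: $\{x\}=\bigcap\{H\in\mathcal H: x\in H\}$ holds, but this intersection is infinite in general. This is the main obstacle. I expect $\tau_{\mathrm m}$ is \emph{not} discrete in general (for example, a star with infinitely many edges has non-isolated centre), and that uniqueness must instead be argued directly without discreteness. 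So I will not rely on isolated points.

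\emph{Step 3 (uniqueness).} Let $K$ be any proper median compactification of $(X,\tau_{\mathrm m})$. By minimality there is a factor map $f\colon K\to\widehat X$, and I show $f$ is injective, mimicking Theorem~\ref{t:MedPretreeMMC}(3). Suppose $a\neq b$ in $K$ with $f(a)=f(b)$. Then $f$ is constant on $[a,b]_K$. Using the $(T^m_2)$-type separation in $K$ (Proposition~\ref{p:MMCisMedHausd} applied to the compact algebra, or a direct argument), I would find points of $X$ inside $[a,b]_K$ that are distinct. The cleanest way: since $K$ is compact and $X$ is dense, pick nets $x_\alpha\to a$, $y_\alpha\to b$ in $X$ and project via $\phi_{x_\alpha,y_\alpha}$. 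Alternatively, and preferably, show that every halfspace $H$ of $X$ has $\tau_{\mathrm m}$-clopen extension: $\overline H^K$ and $\overline{H^c}^K$ are disjoint. This follows because $\chi_H=\phi_{c,d}$ (for the adjacent pair from Step 1) extends continuously to $\phi^K_{c,d}\colon K\to[c,d]_K=\{c,d\}$ by Lemma~\ref{l:IntervalClosure}. These extensions separate the points of $K$ precisely when $K$ embeds in $\{0,1\}^{\mathcal H}$. To prove separation, take $a\neq b$ in $K$ and a continuous median retraction argument: the map $K\to\{0,1\}^{\mathcal H}$ is a continuous median homomorphism extending $\iota$, so it coincides with $f$ composed with the identification $\widehat X\cong X^R$. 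Injectivity reduces to showing that $K$ has no nontrivial interval collapsed by $f$. I would prove this by showing that $[a,b]_K$, if nontrivial, contains an adjacent pair of $K$ coming from a wall of $X$. I expect this last step, controlling nontrivial intervals between boundary points of $K$, to be the hardest part. My first attempt would be to use Proposition~\ref{p:ConvexInCompactification}: intervals of $X$ are finite, hence compact, so $X$ is convex in $K$. This lets me project $a$ and $b$ onto finite intervals $[x,y]_X$ with $x,y\in X$ close to $a,b$, and find an edge separating the projections.
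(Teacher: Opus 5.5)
\paragraph{Step~1 and the gap in Step~3.}
Your Step~1 is correct up to one inaccuracy. $\mathcal B_{u,u'}$ only \emph{refines} $\mathcal B_{u,v}$; it is not equal to it. Indeed $B^u_{u'}=\phi_{u,v}^{-1}(\{u\})$ is strictly smaller than $B^u_v=\phi_{u,v}^{-1}([u,v))$ once $[u,v]$ has three points, but refinement is all you need. Identifying $\mathcal U_{\mathrm m}$ directly with the uniformity induced by $\iota$ is a cleaner route to $\widehat X\cong X^R$ than the paper's, which obtains the Roller identification only as a by-product of uniqueness.

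The genuine gap is Step~3. You never prove that the clopen extensions $\phi^K_{c,d}$ of the walls of $X$ separate the points of an arbitrary proper compactification $K$, and that is the whole content of uniqueness. The tools you name do not give it:
\begin{itemize}
\item Proposition~\ref{p:MMCisMedHausd} concerns $\widehat X$ only. Whether an arbitrary $K$ lies in $(T^m_2)$ is exactly what is at stake; compare Question~\ref{q:RigidityMMC}.
\item The reduction to a collapsed interval $[a,b]_K$ adds nothing, because a halfspace separating two points of $[a,b]_K$ already separates $a$ and $b$.
\item Your final sentence does not say why the projections of $a,b$ differ, nor why an edge separating the projections separates $a$ and $b$.
\end{itemize}
The paper closes this step with a different idea. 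First it shows $K$ is zero-dimensional. A nondegenerate component would contain a connected nondegenerate interval $[u,v]_K$. By density, some $x,y\in X$ have distinct projections $p',q'$ onto it. Then $[p',q']_K=\phi_{u,v}([x,y]_X)$ would be both finite and connected, which is impossible. Next, \cite[Lemma~12.4.11]{BowditchMedian} supplies clopen halfspaces of $K$ separating points, and each of these equals the clopen extension of its trace on $X$.

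\paragraph{How to complete your projection idea.}
Your idea can be completed, more elementarily than the paper, but you must supply the following argument. Take $a\neq b$ in $K$.
\begin{enumerate}
\item The map $(x,y)\mapsto(m(x,a,y),m(x,b,y))$ is continuous and takes the value $(a,b)$ at $(x,y)=(a,b)$. Since $X^2$ is dense in $K^2$, there are $x,y\in X$ with $p:=m(x,a,y)\neq q:=m(x,b,y)$.
\item By Lemma~\ref{l:IntervalClosure}, $p,q\in[x,y]_K=[x,y]_X$, which is finite. Hence $p,q\in X$.
\item Let $d$ be the successor of $p$ in a maximal chain of $([p,q],\leq_p)$. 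Then $[p,d]=\{p,d\}$, $\phi_{p,d}(p)=p$ and $\phi_{p,d}(q)=d$.
\item Since $[p,d]_K\subseteq[x,y]_K$, the nested gate retractions satisfy $\phi^K_{p,d}=\phi^K_{p,d}\circ\phi^K_{x,y}$. This follows from Lemma~\ref{l:compositionOFgates}(1), exactly as in the proof of Lemma~\ref{lem:branch_equiv}.
\item Therefore $\phi^K_{p,d}(a)=p\neq d=\phi^K_{p,d}(b)$.
\end{enumerate}
This proves injectivity of $K\to X^R$. In fact it shows $K\in(T^m_2)$, so Proposition~\ref{p:rigid}(1) would also finish the argument. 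It avoids the zero-dimensionality and quasicomponent arguments the paper uses. As written, however, your Step~3 is only a plan, so uniqueness is not established.
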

\begin{proof}
	Let \(Y\) be an arbitrary proper median compactification of \((X,\tau_{\mathrm m})\). We first
	show that \(Y\) is zero-dimensional. 
	Suppose that \(Y\) has a connected component \(C\) with more than one point.
	By \cite[Lemma~12.4.10]{BowditchMedian}, \(C\) is convex. Choose
	\(u\neq v\) in \(C\). Then the interval \([u,v]_Y\) is the image of \(C\)
	under the continuous retraction \(\phi_{u,v}\), and therefore is connected
	and contains at least two points.
	
	Since \(X\) is dense in \(Y\), the set \(\phi_{u,v}(X)\) is dense in
	\([u,v]_Y\). Hence there exist \(x,y\in X\) such that
	\[
	p:=\phi_{u,v}(x)\neq q:=\phi_{u,v}(y).
	\] 
	Since \(\phi_{u,v}\) is a median homomorphism and its image is the convex
	subalgebra \([u,v]_Y\), the interval-image property gives
	\[
	\phi_{u,v}([x,y]_Y)=[\phi_{u,v}(x),
	\phi_{u,v}(y)]_Y=[p,q]_Y .
	\] 
	By Lemma~\ref{l:IntervalClosure}, \([x,y]_Y\) is the closure of
	\([x,y]_X\) in \(Y\). Since \(X\) is a-discrete, \([x,y]_X\) is finite, hence
	closed in the Hausdorff space \(Y\). Thus \([x,y]_Y=[x,y]_X\) is finite.
	Consequently \([p,q]_Y\) is finite.
	
	On the other hand, since \(p,q\in[u,v]_Y\), the retraction \(\phi_{p,q}\)
	maps the connected set \([u,v]_Y\) onto \([p,q]_Y\). Hence \([p,q]_Y\) is
	connected. This is impossible because \([p,q]_Y\) is a finite Hausdorff space
	with at least two points. Therefore all connected components of \(Y\) are
	singletons. Since \(Y\) is compact Hausdorff, it follows that \(Y\) is
	zero-dimensional. Recall that in a compact Hausdorff space, being totally disconnected (all components are singletons) is equivalent to being zero-dimensional (having a base of clopen sets).
	
	We now prove that \(Y\) is canonically the Roller compactification. Let
	\(W=\{W^-,W^+\}\) be a wall of \(X\). 
	Since \(X\) is a-discrete, \(W\) separates some adjacent pair
	\(a,b\in X\). Indeed, choose \(x\in W^-\) and \(y\in W^+\).
	Since \([x,y]\) is finite, choose a maximal chain
	\[
	x=c_0<_x c_1<_x\cdots<_x c_n=y
	\]
	in \(([x,y],\leq_x)\). Let \(i\) be the least index such that
	\(c_i\in W^+\). Then \(c_{i-1}\in W^-\). Put
	\(a=c_{i-1}\) and \(b=c_i\). By the maximality of the chain,
	\([a,b]=\{a,b\}\); hence \(a\) and \(b\) are adjacent, and \(W\)
	separates them. 
	Moreover, by
	\cite[Lemma~8.1.1(6)]{BowditchMedian}, \(W\) is the unique wall separating \(a\) and \(b\).
	
	By Lemma~\ref{l:IntervalClosure},
	$
	[a,b]_Y=\overline{[a,b]_X}^{\,Y}=\{a,b\}.
	$ 
	Hence \(a\) and \(b\) remain adjacent in \(Y\). 
	Since $[a,b]_Y=\{a,b\}$, the continuous retraction
	$\phi_{a,b}\colon Y\to\{a,b\}$ has discrete range. Hence its two
	fibres are complementary clopen halfspaces of $Y$. Their
	restriction to $X$ is a wall separating $a$ and $b$, and therefore
	coincides with $W$ by uniqueness.
	
	 Thus every wall of \(X\) extends
	canonically to a clopen wall of \(Y\). 
	Therefore the Roller coordinates on \(X\),
	$
	\iota \colon X\to\{0,1\}^{\mathcal H(X)},
	$
	extend continuously to a median-preserving map
	$
	\widetilde\iota \colon  Y\to\{0,1\}^{\mathcal H(X)}.
	$
	Since \(X\) is dense in \(Y\) and \(\widetilde\iota\) is continuous, we have
	\[
	\widetilde\iota(Y)=\widetilde\iota(\overline X^Y)
	\subseteq \overline{\widetilde\iota(X)}
	=\overline{\iota(X)}=X^R .
	\]
	On the other hand, \(\widetilde\iota(Y)\) is compact, hence closed, and contains
	\(\iota(X)\). Therefore it contains \(\overline{\iota(X)}=X^R\). Consequently,
	\(\widetilde\iota(Y)=X^R\).
		
	It remains to show that \(\widetilde\iota\) is injective. 
	Since \(Y\) is a zero-dimensional compact Hausdorff median algebra, its clopen
	halfspaces separate points. Indeed, \(Y\) is interval-compact, because every
	interval is closed in the compact Hausdorff space \(Y\). Moreover, in a compact
	Hausdorff zero-dimensional space, distinct points lie in distinct quasicomponents.
	Therefore, by \cite[Lemma~12.4.11]{BowditchMedian}, any two distinct points of
	\(Y\) can be separated by a clopen halfspace of \(Y\).  
	Let \(s\neq t\) in \(Y\). Choose a clopen halfspace \(H\subseteq Y\)
	with \(s\in H\) and \(t\notin H\).  
		Since \(H\) and \(Y\setminus H\) are nonempty open subsets of
	\(Y\), and \(X\) is dense in \(Y\), both \(H\) and
	\(Y\setminus H\) meet \(X\). Therefore \(H\cap X\) and
	\(X\setminus H\) are both nonempty, and \(H\cap X\) is a
	halfspace of \(X\).
	
	By the previous paragraph, the halfspace \(H\cap X\) has a canonical clopen extension to \(Y\). Since both \(H\) and this extension are clopen subsets of
	\(Y\) with the same trace on the dense subset \(X\), they are equal. Hence one
	of the Roller coordinates separates \(s\) and \(t\). Thus
	\(\widetilde\iota\) is injective.
	
	Consequently \(\widetilde\iota \colon Y\to X^R\) is a continuous bijection between
	compact Hausdorff spaces, hence a homeomorphism. It is also median preserving.
	Thus every proper median compactification of \((X,\tau_{\mathrm m})\) is canonically
	isomorphic to the Roller compactification. In particular, the MMC \(\widehat{X}\) is
	canonically isomorphic to \(X^R\).
\end{proof}

\begin{prop} \label{prop:topologically_discrete_criterion}
	Let $X$ be an a-discrete median algebra. The intrinsic median topology $\tau_{\mathrm m}$ on $X$ is topologically discrete if and only if the median graph of $X$ is locally finite (i.e., every vertex has finite degree).
\end{prop}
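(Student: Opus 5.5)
I plan to work directly with the subbase of $\tau_{\mathrm m}$ given by Corollary~\ref{c:BranchesSubbaseTauM}, together with Lemma~\ref{l:SubbasicStars}. The first observation is that in an a-discrete algebra every nontrivial chain interval $[u,v]$ is a finite chain $u=c_0<_u c_1<_u\cdots<_u c_n=v$. By Lemma~\ref{lem:branch_equiv} and the proof of Theorem~\ref{t:InitialUniformity}, the covers coming from $[u,v]$ are generated by the covers $\mathcal B_{c_{i-1},c_i}$ for consecutive (adjacent) pairs. Indeed, the interval uniformity on a finite chain is discrete, and its subbase consists of the adjacent two-ray covers. Hence $\tau_{\mathrm m}$ has an open subbase consisting of the halfspaces $B^a_b$, with $\{a,b\}$ running over adjacent pairs, i.e. over edges of the median graph. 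Each such halfspace is clopen, because $\mathcal B_{a,b}$ is a disjoint cover by Lemma~\ref{l:SubbasicStars}(4). By the wall argument in the proof of Theorem~\ref{thm:unique_compactification} (a wall separates some adjacent pair and is the unique wall separating it), these are exactly all halfspaces of $X$. So $\tau_{\mathrm m}$ is the topology induced from $\{0,1\}^{\mathcal H(X)}$, i.e. the subspace topology of $X$ in the Roller compactification.

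\emph{Sufficiency (locally finite $\Rightarrow$ discrete).} Fix $x\in X$ with neighbours $y_1,\dots,y_k$, and set $U:=\bigcap_{i=1}^k B^{x}_{y_i}$. Each $B^x_{y_i}$ is the halfspace containing $x$ of the unique wall separating $x,y_i$, so $U$ is a finite intersection of open sets containing $x$. I claim $U=\{x\}$. Take $z\neq x$. Since $[x,z]$ is finite, a maximal chain in $([x,z],\leq_x)$ starts with an element $c_1$ adjacent to $x$, by the argument in Theorem~\ref{t:DISCRETEisCV}. Then $c_1=y_i$ for some $i$, and $y_i\in[x,z]$ means $\phi_{x,y_i}(z)=y_i$, so $z\notin B^x_{y_i}$. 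Thus $\{x\}$ is open.

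\emph{Necessity (discrete $\Rightarrow$ locally finite).} Suppose $x$ has infinitely many distinct neighbours $y_1,y_2,\dots$. A basic neighbourhood of $x$ is a finite intersection $\bigcap_{j=1}^n H_j$ of halfspaces containing $x$. I want some neighbour $y_i$ to lie in all $H_j$. A neighbour $y_i$ leaves $H_j$ only if the wall of $H_j$ separates $x$ from $y_i$. That wall is then the unique wall separating the adjacent pair $x,y_i$, so distinct neighbours correspond to distinct walls. Each of the $n$ walls therefore excludes at most one neighbour, and choosing $i$ outside these $n$ exceptions gives $y_i\in\bigcap_j H_j$. Hence $\{x\}$ is not open.

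The main obstacle I expect is justifying that distinct neighbours $y\neq y'$ of $x$ determine distinct walls. If one wall $W$ separated $x$ from both $y$ and $y'$, consider $w=m(x,y,y')$. Adjacency gives $w\in[x,y]=\{x,y\}$ and $w\in[x,y']=\{x,y'\}$, so $w=x$ since $y\neq y'$. But $y,y'$ lie in the same convex halfspace of $W$, which must then contain $w=x$, a contradiction. The reduction of the subbase to adjacent pairs should be routine given Lemma~\ref{l:PreimageRays}.
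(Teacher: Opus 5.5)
Your proof is correct and follows essentially the same route as the paper. For sufficiency, you intersect the branches $B^x_{y_i}$ over the finitely many neighbours and take the first step of a maximal chain in $[x,z]$ (the paper says ``geodesic''). For necessity, you show that each halfspace containing $x$ excludes at most one neighbour, which is the paper's argument via $x\in[v,w]$ and convexity of $X\setminus H_i$.

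Your preliminary reduction of the subbase to adjacent-pair halfspaces, and the identification with Roller's topology, is correct but not needed. Every subbasic branch is already a halfspace by Lemma~\ref{l:ConvexFibers}, and the branches $B^x_{y_i}$ are themselves subbasic. Likewise, uniqueness of the wall separating an adjacent pair is not needed, since your median computation $m(x,y,y')=x$ already gives the key step.
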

\begin{proof} 
	Let $G(X)$ be the graph whose vertex set is $X$ and whose edges are the adjacent pairs.

	Assume first that \(G(X)\) is locally finite, and fix \(x\in X\). Let
	\(v_1,\ldots,v_n\) be all vertices adjacent to \(x\). For each \(i\), the branch
	\(B^x_{v_i}\) is an intrinsic open halfspace containing \(x\). We claim that
	$
	\bigcap_{i=1}^n B^x_{v_i}=\{x\}.
	$
	Indeed, if \(y\neq x\), then along any geodesic from \(x\) to \(y\) there is a first
	vertex \(v_i\) adjacent to \(x\). Thus \(v_i\in[x,y]\), equivalently
	\(m(x,y,v_i)=v_i\), and hence \(y\notin B^x_{v_i}\). Therefore \(\{x\}\) is
	\(\tau_{\mathrm m}\)-open. Since \(x\) was arbitrary, \(\tau_{\mathrm m}\) is discrete.
	
	Conversely, suppose that \(x\) has infinitely many adjacent neighbors. Let
	$
	O=H_1\cap\cdots\cap H_k
	$
	be a basic \(\tau_{\mathrm m}\)-neighborhood of \(x\), where each \(H_i\) is a
	subbasic branch containing \(x\). Each \(H_i\) is a convex halfspace. We show that
	each \(H_i\) can exclude at most one adjacent neighbor of \(x\). Indeed, if two
	distinct adjacent neighbors \(v,w\) of \(x\) belonged to \(X\setminus H_i\), then,
	since \(X\setminus H_i\) is convex and \(x\in[v,w]\), we would get
	\(x\in X\setminus H_i\), contradicting \(x\in H_i\). Hence the finite intersection
	\(O\) excludes at most finitely many adjacent neighbors of \(x\). Since \(x\) has
	infinitely many adjacent neighbors, some adjacent neighbor \(v\neq x\) still belongs
	to \(O\). Thus no basic neighborhood of \(x\) isolates \(x\), and \(\tau_{\mathrm m}\)
	is not discrete.
\end{proof}

\begin{remark} \label{rem:topology_equivalence} 
	In \cite{Roller}, Roller defines the \emph{convex topology} on an a-discrete median algebra as the topology inherited from its canonical embedding into the Roller compactification, which has the set of all halfspaces as a subbase. (This coincides precisely with the \emph{weak topology} in the sense of van de Vel \cite{Vel-book}). 	
In an a-discrete median algebra, every wall is generated by an adjacent pair;
equivalently, every halfspace is one of the two branches associated with some
adjacent pair. Hence Roller's convex topology coincides with the intrinsic
median topology \(\tau_{\mathrm m}\).
\end{remark}

\begin{ex} (The atomic space $\ell_1$ versus nonatomic $L_1([0,1])$) 
\label{ex:l1L1contrast}  
Equip $\ell_1$ and $L_1([0,1])$ with their standard
$L_1$-metrics and their canonical lattice medians
\(m(f,g,h)=(f\wedge g)\vee(g\wedge h)\vee(h\wedge f)\).
In $\ell_1$ this median is computed coordinatewise, while in
$L_1([0,1])$ the lattice operations are defined on equivalence
classes modulo equality almost everywhere.

	\begin{enumerate}
\item The median algebra $\ell_1$ satisfies $(T^m_2)$. Its intrinsic
topology $\tau_{\mathrm m}$ is the topology of coordinatewise
convergence inherited from $\mathbb R^{\mathbb N}$, strictly coarser than the metric topology.		
		
\item The median algebra $L_1([0,1])$, with respect to
		Lebesgue measure, has no nontrivial chain intervals.
		Consequently, its intrinsic uniformity and intrinsic
		topology are trivial.  
	\end{enumerate}
\end{ex} 
\begin{proof}
	(1) Let $x\neq y$ in $\ell_1$. Choose $n\in\mathbb N$ such
	that $x_n\neq y_n$, and put
	\(
	z=x+(y_n-x_n)e_n.
	\) 
	Then $[x,z]$ is a nontrivial chain interval, since only the
	$n$-th coordinate varies. 
	
	Then $x$ and $z$ differ only in the $n$-th coordinate, and
	\[
	[x,z]
	=
	\left\{
	x+(t-x_n)e_n:
	t\in[\min\{x_n,y_n\},\max\{x_n,y_n\}]
	\right\}.
	\]
	Thus $[x,z]$ is median-isomorphic to a real interval and hence is
	a nontrivial chain interval.

	 Moreover,
	\(
	\phi_{x,z}(x)=x,
	\ 
	\phi_{x,z}(y)=z.
	\) 
	Thus $[x,z]$ separates $x$ and $y$, and hence
	$\ell_1\in(T^m_2)$.   
		More generally, a nontrivial interval $[u,v]$ in $\ell_1$ is
	a chain if and only if $u$ and $v$ differ in exactly one
	coordinate. Indeed, if they differ in two coordinates, then
	$[u,v]$ contains a median copy of $\{0,1\}^2$. Consequently,
	the subbasic branches are precisely the inverse images of open
	rays under the coordinate projections, and hence $\tau_{\mathrm m}$ is
	the topology of coordinatewise convergence.

	(2) Let $f\neq g$ in $L_1([0,1])$. Choose measurable
	representatives, still denoted by $f$ and $g$, and put
	\(
	E=\{t\in[0,1]:f(t)\neq g(t)\}.
	\)
	Since $f\neq g$ in $L_1([0,1])$, the set $E$ has positive
	measure.
 By nonatomicity, there exist
	disjoint measurable subsets $A,B\subseteq E$, both of positive measure. 
	For a measurable set $C\subseteq E$, let $h_C\in L_1([0,1])$
	be the equivalence class represented by 
	\[
	h_C(t)=
	\begin{cases}
		g(t),&t\in C,\\
		f(t),&t\notin C.
	\end{cases}
	\]
	The four elements 
	\(
	f,\ h_A,\ h_B,\ h_{A\cup B}
	\) 
	belong to $[f,g]$ and form a median subalgebra isomorphic to
	$\{0,1\}^2$. Hence $[f,g]$ is not a chain. 
	Since $f\neq g$ were arbitrary, $L_1([0,1])$ has no
	nontrivial chain intervals. Therefore  both $\mathcal U_m$
	and $\tau_{\mathrm m}$ are trivial.
\end{proof}

The median metric space $\ell_1$, as a median algebra, has infinite rank, and it is not
a-discrete. So, this example is not covered by either of the two sufficient
conditions above: Theorems \ref{t:FiniteRankHausd} and \ref{t:DISCRETEisCV}.

\subsection*{Comparison with the Roller--Fioravanti compactification}
\label{subsec:RF_comparison}

For a topological median algebra $(X, \sigma)$ with compact intervals, it is highly instructive to compare the Minimal Median Compactification (MMC) introduced in this work with the ``zero-completion'' $X \to X^{RF}$ in the sense of \cite{Fioravanti20}. We will refer to it as the \textit{Roller--Fioravanti (RF) compactification} (as in \cite{Me-TameMedian}). 
For dendrites, a related construction goes back to Ward \cite{Ward}.

RF compactification is constructed for median algebras $X$ with compact intervals. The construction is based on canonical interval retractions $\phi_{u,v} \colon X \to [u,v]_X$ for \emph{all} pairs $u, v \in X$. 

The corresponding diagonal map is an injective $\sigma$-continuous median homomorphism 
$$\nu \colon (X, \sigma) \to X^{RF} \subset \prod_{u,v} [u,v]_X.$$
 However, it is not necessarily a topological embedding (see  \cite{Fioravanti20} for details). A remarkable consequence of our framework is that this ``weaker'' inherited topology on every algebra $X \in (T_2^{m})$ is just the intrinsic topology $\tau_{\mathrm m}(X)$ by Proposition \ref{p:RFandMMC} below.

  \begin{prop}\label{p:RFandMMC}
  	Let $(X,m,\sigma)$ be a topological median algebra with compact
  	intervals such that $(X,m)\in(T^m_2)$. Denote by
  	\(
  	\nu\colon (X,\sigma)\to X^{RF}
  	\)
  	the RF compactification. Then, regarded as a compactification of
  	$(X,\tau_{\mathrm m})$, the map
  	\(
  	\nu\colon (X,\tau_{\mathrm m})\hookrightarrow X^{RF}
  	\)
  	is canonically isomorphic to the MMC. Moreover, the topology on
  	$X$ inherited from $X^{RF}$ is precisely $\tau_{\mathrm m}$.
  \end{prop}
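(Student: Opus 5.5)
The strategy is to compare the two uniformities on $X$. The first is $\mathcal U_{RF}$, induced by the diagonal map $\nu$ into $\prod_{u,v}[u,v]_X$, where each compact interval carries its unique compact uniformity. The second is $\mathcal U_{\mathrm m}$. I will show $\mathcal U_{RF}=\mathcal U_{\mathrm m}$. Once this is established, the inherited topology is $\operatorname{top}(\mathcal U_{RF})=\tau_{\mathrm m}$. Moreover, $\nu$ is injective and $X^{RF}=\overline{\nu(X)}$ is compact Hausdorff, so $X^{RF}$ is the completion of $(X,\mathcal U_{RF})=(X,\mathcal U_{\mathrm m})$, i.e.\ the MMC. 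Median-preservation of the identification follows, as in Proposition~\ref{p:Median-compactif}, from density and continuity of both medians.

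\emph{Step 1: $\mathcal U_{\mathrm m}\subseteq\mathcal U_{RF}$.} Fix a nontrivial chain interval $[u,v]$ and consider the coordinate $\phi_{u,v}\colon X\to[u,v]_X$, which is $\mathcal U_{RF}$-uniformly continuous into the compact space $([u,v]_X,\sigma)$. Since $\sigma$ is Hausdorff, $\{u\}$ and $\{v\}$ are closed, so $\{[u,v)\,,(u,v]\}$ is an open cover of the compact space $[u,v]_X$, hence a uniform cover. Its preimage is $\mathcal B_{u,v}$, which shows $\mathcal B_{u,v}\in\mathcal U_{RF}$. This is the same argument as in Theorem~\ref{t:MMC}(2), applied to a single coordinate.

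\emph{Step 2: $\mathcal U_{RF}\subseteq\mathcal U_{\mathrm m}$.} This is the main obstacle, since the RF coordinates include retractions onto non-chain intervals. I will show that each $\phi_{u,v}\colon(X,\mathcal U_{\mathrm m})\to([u,v],\sigma)$ is uniformly continuous; because the target is compact, it suffices to show continuity of the extension to the completion.

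The plan is to work with the interval $I=[u,v]_X$ itself, which is convex and $\sigma$-compact, and argue in three steps.
\begin{enumerate}
\item By Lemma~\ref{l:LeastToplogy}(2) applied to the compact tma $(I,\sigma|_I)$, together with Proposition~\ref{p:MMCconvex} (so that $I\in(T^m_2)$ and $\tau_{\mathrm m}(I)=\tau_{\mathrm m}(X)|_I$), we get $\sigma|_I=\tau_{\mathrm m}(I)$. Hence $\mathcal U_{\mathrm m}(X)|_I=\mathcal U_{\mathrm m}(I)$ is the unique uniformity of the compact space $(I,\sigma)$.
\item The retraction $\phi_{u,v}\colon X\to I$ is $\mathcal U_{\mathrm m}$-uniformly continuous, because it is median-preserving onto the convex set $I$. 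To see this, take a subbasic cover $\mathcal B^I_{c,d}$ of $I$ with $[c,d]\subseteq I$ a chain. By Lemma~\ref{l:compositionOFgates}(1), $\phi_{c,d}\circ\phi_{u,v}=\phi_{c,d}$ on $X$, so $\phi_{u,v}^{-1}(\mathcal B^I_{c,d})=\mathcal B^X_{c,d}\in\mathcal U_{\mathrm m}(X)$.
\item Combining the two previous items, every coordinate of $\nu$ is $\mathcal U_{\mathrm m}$-uniformly continuous into the compact uniformity of $([u,v],\sigma)$. Therefore $\mathcal U_{RF}\subseteq\mathcal U_{\mathrm m}$.
\end{enumerate}

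The delicate point I expect is the Hausdorffness hypothesis $(T^m_2)$ for $I$, which is needed to invoke Lemma~\ref{l:LeastToplogy}(2). This is exactly where the assumption $X\in(T^m_2)$ and convex heredity (Proposition~\ref{p:MMCconvex}) enter.

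With both inclusions established, the statement follows. In particular, the canonical factor map $X^{RF}\to\widehat X$ from Theorem~\ref{t:MMC}(2) is an isomorphism.
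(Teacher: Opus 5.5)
Your proposal is correct and follows essentially the same route as the paper. You prove $\mathcal U_{RF}\subseteq\mathcal U_{\mathrm m}$ via convex heredity (Proposition~\ref{p:MMCconvex}) and Lemma~\ref{l:LeastToplogy}(2), which identify $\sigma|_{[u,v]}$ with $\tau_{\mathrm m}([u,v])$, and you prove $\mathcal U_{\mathrm m}\subseteq\mathcal U_{RF}$ using only the chain coordinates. There are two minor differences: you check uniform continuity of $\phi_{u,v}$ directly on subbasic covers, where the paper cites Theorem~\ref{t:UniformMedian}, and you obtain $\mathcal B_{u,v}\in\mathcal U_{RF}$ by the open-cover argument of Theorem~\ref{t:MMC}(2) rather than through Theorem~\ref{t:InitialUniformity}.
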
 
\begin{proof}
	Let $\mathcal U_{RF}$ be the precompact uniformity on $X$
	induced by the RF compactification.
	
	For every $u,v\in X$, put $C=[u,v]$. Since $C$ is a convex
	subalgebra of $X$, Proposition~\ref{p:MMCconvex} gives
	\(
	\mathcal U_m(C)=\mathcal U_m(X)|_C.
	\)
	In particular, $C\in(T^m_2)$. By assumption, $C$ is compact in
	the topology $\sigma|_C$. Hence
	Lemma~\ref{l:LeastToplogy}(2) gives
	\(
	\tau_{\mathrm m}(C)=\sigma|_C.
	\)
	Therefore the compact uniformity on $C$ inherited from
	$\sigma$ coincides with $\mathcal U_m(C)$. 
	By Theorem~\ref{t:UniformMedian}, every canonical retraction
	$\phi_{u,v}\colon X\to C$ is uniformly continuous from
	$(X,\mathcal U_m(X))$ to
	\((C,\mathcal U_m(C))\). Thus every coordinate of the RF
	embedding is $\mathcal U_m(X)$-uniformly continuous, and hence
	\(
	\mathcal U_{RF}\subseteq\mathcal U_m(X).
	\)
	
	Conversely, the chain intervals are among the intervals used
	in the RF embedding. For a chain interval, the compact
	uniformity identified above is its interval uniformity.
	Therefore Theorem~\ref{t:InitialUniformity} gives
	\(
	\mathcal U_m(X)\subseteq\mathcal U_{RF}.
	\) 
	Thus
	\(
	\mathcal U_{RF}(X)=\mathcal U_m(X).
	\)
	By Proposition~\ref{p:Median-compactif}, the RF compactification,
	regarded as a compactification of $(X,\tau_{\mathrm m})$, is canonically
	isomorphic to the MMC. The equality of the induced topologies follows
	from the same equality of uniformities.
\end{proof}

\begin{prop}[Median compactifiability and rigidity]
	\label{p:rigid}
	Let $(X,m,\sigma)$ be a topological median algebra such that
	every interval in $X$ is compact in the topology $\sigma$.
	Then:
	\begin{enumerate}
		\item If
		\(
		\eta\colon X\hookrightarrow K
		\)
		is a proper median compactification and
		\(K\in(T^m_2)\), then
		$
		X\in(T^m_2),\ \sigma=\tau_{\mathrm m},
		$
		and $\eta$ is canonically isomorphic to the MMC of
		$(X,\tau_{\mathrm m})$.
		
		\item If, in addition, $X$ has finite rank, then the
		following conditions are equivalent:
		\begin{enumerate}
			\item $(X,m,\sigma)$ admits a proper median
			compactification;
			\item $\sigma=\tau_{\mathrm m}$.
		\end{enumerate}
		If these conditions hold, the MMC is the unique proper
		median compactification of $(X,m,\sigma)$.
	\end{enumerate}
\end{prop}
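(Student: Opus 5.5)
For part (1), I will use Proposition~\ref{p:ConvexInCompactification}. Since every interval of $X$ is $\sigma$-compact, $X$ is a convex subalgebra of $K$, after identifying $X$ with $\eta(X)$. Proposition~\ref{p:MMCconvex} then gives $\mathcal U_{\mathrm m}(X)=\mathcal U_{\mathrm m}(K)|_X$ and $\tau_{\mathrm m}(X)=\tau_{\mathrm m}(K)|_X$. Because $K\in(T^m_2)$, the same proposition yields $X\in(T^m_2)$. Since $K$ is compact Hausdorff and in $(T^m_2)$, Lemma~\ref{l:LeastToplogy}(2) identifies $\tau_{\mathrm m}(K)$ with the compact topology of $K$. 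As $\eta$ is a topological embedding, $\sigma$ is the subspace topology from $K$, so $\sigma=\tau_{\mathrm m}(K)|_X=\tau_{\mathrm m}(X)$.

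For the identification with the MMC, I compare uniformities. The unique compatible uniformity of $K$ restricted to $X$ is $\mathcal U_K$. Because $\tau_{\mathrm m}(K)$ equals the compact topology, the uniformity $\mathcal U_{\mathrm m}(K)$ is precompact, Hausdorff, and compatible with that topology. A compact Hausdorff space carries a unique compatible uniformity, so $\mathcal U_{\mathrm m}(K)$ is the compact uniformity. Hence $\mathcal U_K=\mathcal U_{\mathrm m}(K)|_X=\mathcal U_{\mathrm m}(X)$, and Proposition~\ref{p:Median-compactif} gives $\eta\cong$ MMC. The one point to check here is that $\mathcal U_{\mathrm m}(K)$ is a compatible uniformity of $K$. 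I expect this to follow from precompactness together with the equality of topologies.

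For part (2), I argue each implication separately. For (b)$\Rightarrow$(a): finite rank gives $X\in(T^m_2)$ by Theorem~\ref{t:FiniteRankHausd}, so the MMC of $(X,\tau_{\mathrm m})=(X,\sigma)$ is a proper median compactification by Theorem~\ref{t:MMC}. For (a)$\Rightarrow$(b): let $\eta\colon X\hookrightarrow K$ be proper. Fact~\ref{f:facts2}(1) gives $\rank(K)=\rank(X)<\infty$, and Theorem~\ref{t:FiniteRankHausd} then gives $K\in(T^m_2)$. Part (1) now yields $\sigma=\tau_{\mathrm m}$ and $\eta\cong$ MMC. The same argument applied to an arbitrary proper median compactification gives uniqueness.

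The main obstacle I expect is in part (1): justifying that $X$ convex in $K$ really produces $\mathcal U_K=\mathcal U_{\mathrm m}(X)$. This relies on Proposition~\ref{p:MMCconvex} together with the compactness identification for $K$. The rest is bookkeeping.
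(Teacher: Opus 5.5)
Your proposal is correct and follows the paper's proof essentially step for step. In part (1) you use the same chain of results: convexity of $X$ in $K$ via Proposition~\ref{p:ConvexInCompactification}, heredity of $\mathcal U_{\mathrm m}$ and $(T^m_2)$ via Proposition~\ref{p:MMCconvex}, Lemma~\ref{l:LeastToplogy}(2) to identify $\tau_{\mathrm m}(K)$ with the compact topology, and uniqueness of the compact uniformity; in part (2) you reduce to part (1) via Fact~\ref{f:facts2}(1) and Theorem~\ref{t:FiniteRankHausd}. The point you flag is immediate, and precompactness is not needed for it: $\operatorname{top}(\mathcal U_{\mathrm m}(K))=\tau_{\mathrm m}(K)$ is already the compact topology, so $\mathcal U_{\mathrm m}(K)$ is compatible by definition.
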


\begin{proof}
	(1) By Proposition~\ref{p:ConvexInCompactification}, we may
	identify $X$ with a convex dense subalgebra of $K$. Hence
	Proposition~\ref{p:MMCconvex} gives
	$
	\mathcal U_m(X)=\mathcal U_m(K)|_X,
	\ 
	\tau_{\mathrm m}(X)=\tau_{\mathrm m}(K)|_X.
	$
	
	Since $K\in(T^m_2)$, the first equality implies 
	$X\in(T^m_2)$. Moreover, 
	Lemma~\ref{l:LeastToplogy}(2) shows that $\tau_{\mathrm m}(K)$ coincides
	with the compact topology of $K$.  
	The compactification $\eta \colon X\hookrightarrow K$ is a
	topological embedding. Hence it follows that
	$
	\sigma=\tau_{\mathrm m}(K)|_X=\tau_{\mathrm m}(X).
	$
	
	The intrinsic uniformity $\mathcal U_m(K)$ is now the unique
	uniformity compatible with the compact Hausdorff topology of
	$K$. Its restriction to $X$ is $\mathcal U_m(X)$, so $K$ is
	the Hausdorff completion of $(X,\mathcal U_m(X))$. Therefore
	$\eta$ is canonically isomorphic to the MMC.
	
	(2) Suppose first that $\sigma=\tau_{\mathrm m}$. Since $X$ has finite
	rank, Theorem~\ref{t:FiniteRankHausd} shows that
	$X\in(T^m_2)$, and hence its MMC is a proper median
	compactification.
	
	Conversely, let $\eta\colon X\hookrightarrow K$ be any proper
	median compactification. By
	Fact~\ref{f:facts2}(1),
	$
	\operatorname{rank}(K)=\operatorname{rank}(X)<\infty.
	$
	Hence Theorem~\ref{t:FiniteRankHausd} gives
	$K\in(T^m_2)$, and part~(1) applies. This also proves
	uniqueness.
\end{proof}

   The compactness of the intervals is essential. Uniqueness fails in general, even in rank one. This already reflects the classical nonuniqueness of LOTS compactifications. Let $X=\mathbb Q\cap[0,1]$ with its order median and its intrinsic
   topology, equivalently its usual order topology. 
   Its MMC is the Dedekind compactification $[0,1]$. For any irrational
   $\alpha\in(0,1)$, split the corresponding cut into two adjacent
   points $\alpha^-<\alpha^+$. The resulting compact linearly ordered
   space $K_\alpha$ contains $X$ as a dense ordered subspace and,
   equipped with its order median, is a proper median compactification
   of $X$. It is not isomorphic to the MMC, since the canonical factor
   $K_\alpha\to[0,1]$ collapses $\{\alpha^-,\alpha^+\}$ to $\alpha$.
  
   By Lemma~\ref{l:LeastToplogy}, every median metric space
   $(X,m,d)$ satisfies
   \(
   \tau_{\mathrm m} \subseteq\tau_d.
   \)
   It is therefore natural to ask when these two topologies coincide. 
   
   \begin{thm}\label{t:coincidence}
   	Let $(X,m,d)$ be a complete, connected, locally compact and
   	locally convex median metric space with compact intervals.
   	Assume that $(X,m)\in(T^m_2)$. Then:
   	\begin{enumerate}
   		\item The intrinsic topology and the metric topology coincide:
   		$\tau_{\mathrm m}=\tau_d$. Moreover, the RF compactification is
   		canonically isomorphic to the MMC of
   		$(X,\tau_{\mathrm m})=(X,\tau_d)$.
   		
   		\item If, in addition, $X$ has finite rank, then this is the unique proper median compactification of $(X,\tau_d)$.
   		
   		\item In particular, all these conclusions hold for every
   		finite-dimensional locally compact $\mathrm{CAT}(0)$ cube
   		complex endowed with its cubical $\ell^1$-metric.
   	\end{enumerate}
   \end{thm}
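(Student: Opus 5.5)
The plan is to reduce everything to the RF compactification, using Proposition~\ref{p:RFandMMC} and Proposition~\ref{p:rigid}, and to use Fioravanti's embedding theorem~\cite[Proposition~4.20]{Fioravanti20} for the missing topological input. By Lemma~\ref{l:LeastToplogy}(1) we already have $\tau_{\mathrm m}\subseteq\tau_d$, since the median is $d$-Lipschitz by Example~\ref{ex:MedianMetric}. So the whole of part (1) comes down to proving $\tau_d\subseteq\tau_{\mathrm m}$.

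For this reverse inclusion I would invoke Fioravanti's result: for a complete, connected, locally compact, locally convex median metric space with compact intervals, the diagonal map $\nu\colon(X,\tau_d)\to X^{RF}$ is a topological embedding (onto an open subset), not merely a continuous injection. By Proposition~\ref{p:RFandMMC}, which applies because intervals are compact and $X\in(T^m_2)$, the topology that $X^{RF}$ induces on $X$ is exactly $\tau_{\mathrm m}$. Combining these gives $\tau_d=\tau_{\mathrm m}$. The same proposition then identifies $\nu$, now viewed as a compactification of $(X,\tau_{\mathrm m})=(X,\tau_d)$, canonically with the MMC.

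The main obstacle is matching the exact hypotheses of \cite[Proposition~4.20]{Fioravanti20} and verifying that it yields an embedding rather than just injectivity. If the citation only gives local information, I would argue locally instead. Take $x\in X$ and a compact convex neighbourhood $C$ of $x$, which exists by local compactness and local convexity. On $C$, Lemma~\ref{l:LeastToplogy}(2) together with Proposition~\ref{p:MMCconvex} gives $\tau_{\mathrm m}(C)=\tau_d|_C$, because $C\in(T^m_2)$ is compact. It then remains to show that $C$ is a $\tau_{\mathrm m}$-neighbourhood of $x$, i.e. that the gate retraction onto $C$ detects points leaving $C$. This is exactly what Fioravanti's openness statement supplies.

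Part (2) follows from Proposition~\ref{p:rigid}(2): intervals are compact, the rank is finite, and $\sigma=\tau_d=\tau_{\mathrm m}$, so the MMC is the unique proper median compactification. For part (3), a finite-dimensional locally compact CAT(0) cube complex with its $\ell^1$-metric is a complete, connected, locally compact, locally convex median metric space of finite rank, and its intervals are compact because they are finite subcomplexes. Since the rank is finite, Theorem~\ref{t:FiniteRankHausd} gives $(T^m_2)$, and parts (1) and (2) then apply.
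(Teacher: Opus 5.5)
Your proposal is correct and follows essentially the same route as the paper. Fioravanti's embedding theorem makes $\nu\colon(X,\tau_d)\to X^{RF}$ a topological embedding, Proposition~\ref{p:RFandMMC} identifies the induced topology with $\tau_{\mathrm m}$ and $X^{RF}$ with the MMC, Proposition~\ref{p:rigid}(2) gives uniqueness, and part (3) is a hypothesis check; your fallback local argument is unnecessary (and not independent, since it defers to Fioravanti again), while your explicit appeal to Theorem~\ref{t:FiniteRankHausd} for $(T^m_2)$ in part (3) usefully states a step the paper leaves implicit.
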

   
   \begin{proof} (1) 
   	Under the standing hypotheses, the Roller--Fioravanti
   	compactification of $X$ is defined. Since $X$ is connected and
   	locally compact, Fioravanti's embedding theorem
   	\cite[Proposition~4.20]{Fioravanti20} shows that
   	$
   	\nu\colon(X,\tau_d)\hookrightarrow X^{RF}
   	$
   	is a topological median embedding.
   By Proposition~\ref{p:RFandMMC}, the
   	topology induced on $X$ from $X^{\rm RF}$ is $\tau_{\mathrm m}$, and
   	$X^{\rm RF}$ is canonically isomorphic to the MMC. Since $\nu$
   	is a topological embedding for $\tau_d$, it follows that
   	$\tau_{\mathrm m}=\tau_d$. This proves (1).
   	
   	(2) 
   	If $X$ has finite rank, Proposition~
   	\ref{p:rigid}, applied with 
   	$\sigma=\tau_d=\tau_{\mathrm m}$, shows that the MMC is the unique proper
   	median compactification. This proves (2).
   	
   	(3) 
   	Finally, a finite-dimensional locally compact CAT(0) cube
   	complex, with its cubical $\ell^1$-metric, is a locally convex
   	median metric space of finite rank. Local compactness and finite
   	dimension imply local finiteness, hence properness and
   	completeness; in particular, its intervals are compact. Thus all
   	the hypotheses of parts~(1) and~(2) hold; see also
   	\cite[Sections~2.1 and 4.2]{Fioravanti20}. 
   \end{proof}
   
   \begin{ex}[The metric hedgehog] \label{ex:MetricHedgehog}
   	Let $X$ be the metric hedgehog formed by countably many copies
   	of $[0,1]$ whose origins are identified. Then
   	\(
   	\tau_{\mathrm m}\subsetneq\tau_d.
   	\) 
   	In fact, $\tau_{\mathrm m}$ is compact and Hausdorff. Under the canonical
   	identification
   	\(
   	X^{RF}\cong(X,\tau_{\mathrm m}),
   	\)
   	the RF compactification map is the continuous bijection
   	\(
   	\operatorname{id}\colon(X,\tau_d)\to(X,\tau_{\mathrm m}).
   	\) 
   \end{ex}
   
   \begin{proof}
   	Since $X$ is a median pretree, Theorem~\ref{t:MedPretreeMMC}
   	shows that $\tau_{\mathrm m}$ is its shadow topology. On each individual
   	arm, $\tau_{\mathrm m}$ induces the ordinary interval topology. Every
   	basic $\tau_{\mathrm m}$-neighborhood of the origin contains all but
   	finitely many arms entirely and contains an initial interval in
   	each of the remaining finitely many arms. Consequently,
   	$(X,\tau_{\mathrm m})$ is the one-point compactification of the
   	topological sum
   	\(
   	\bigsqcup_{n\in\mathbb N}(0,1]_n.
   	\) 
   	In particular, $\tau_{\mathrm m}$ is compact and Hausdorff.
   	
   	On the other hand, a metric ball of radius $0<\varepsilon<1$
   	about the origin truncates every arm and therefore contains no basic $\tau_{\mathrm m}$-neighborhood of the origin. Hence
   	$\tau_{\mathrm m}\subsetneq\tau_d$. 
   	
   	Since $(X,\tau_{\mathrm m})$ is already compact, its MMC is the identity
   	compactification. By Proposition~\ref{p:RFandMMC}, the RF
   	compactification is canonically isomorphic to this MMC. Hence, under
   	the identification $X^{RF}\cong(X,\tau_{\mathrm m})$, the RF compactification
   	map is
   	\(
   	\operatorname{id}\colon(X,\tau_d)\to(X,\tau_{\mathrm m}),
   	\)
   	which is continuous and bijective but not a topological embedding. 
   \end{proof}

   \begin{cor} \label{c:0} 
   	Let $X$ be the metric hedgehog of
   	Example~\ref{ex:MetricHedgehog}, equipped with its metric topology
   	$\tau_d$. Then $(X,m,\tau_d)$ admits no proper median
   	compactification.
   \end{cor}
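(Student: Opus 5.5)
We argue by contradiction, using Proposition~\ref{p:rigid}. The metric hedgehog $X$ is a median pretree, so it has rank $1$. Each interval of $X$ is either a segment inside one arm or a union of two segments through the origin. In both cases it is compact in $\tau_d$, being a finite union of closed real segments. Hence $(X,m,\tau_d)$ satisfies the standing hypothesis of Proposition~\ref{p:rigid}: every interval is $\sigma$-compact for $\sigma=\tau_d$, and the rank is finite.

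Now assume that $(X,m,\tau_d)$ admits a proper median compactification $\eta\colon X\hookrightarrow K$. By Proposition~\ref{p:rigid}(2), the equivalence (a)$\Leftrightarrow$(b) forces $\tau_d=\tau_{\mathrm m}$. This can also be seen directly. By Fact~\ref{f:facts2}(1), $\operatorname{rank}(K)=\operatorname{rank}(X)=1$, so Theorem~\ref{t:FiniteRankHausd} gives $K\in(T^m_2)$. Part~(1) of Proposition~\ref{p:rigid} then gives $\sigma=\tau_{\mathrm m}$. However, Example~\ref{ex:MetricHedgehog} shows that $\tau_{\mathrm m}\subsetneq\tau_d$. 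This is a contradiction, so no proper median compactification exists.

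The only point that needs care is checking compactness of the intervals in $\tau_d$. Since the hedgehog is an $\mathbb R$-tree-like union of segments, intervals are geodesic segments, so this is immediate. A second route avoids rigidity entirely: $(X,\tau_{\mathrm m})$ is already compact, and Lemma~\ref{l:LeastToplogy}(1) applied in $K$ gives $\tau_{\mathrm m}(K)|_X\subseteq\tau_d$. Combining this with convexity of $X$ in $K$ (Proposition~\ref{p:ConvexInCompactification}) would give the same conclusion. We do not need it, because Proposition~\ref{p:rigid} already does the work.
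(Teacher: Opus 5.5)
Your proof is correct and is the paper's argument: rank one and $\tau_d$-compact intervals place $(X,m,\tau_d)$ under Proposition~\ref{p:rigid}(2), and $\tau_{\mathrm m}\subsetneq\tau_d$ from Example~\ref{ex:MetricHedgehog} rules out condition (b), so no proper median compactification exists. Your aside about a ``second route'' would not work as stated: Lemma~\ref{l:LeastToplogy}(1) only gives $\tau_{\mathrm m}(K)|_X\subseteq\tau_d$, which is consistent with $\tau_{\mathrm m}\subsetneq\tau_d$, and the reverse inclusion needs $K\in(T^m_2)$ (via finite rank), which is exactly the rigidity argument. Since you do not rely on that aside, this does not affect the proof.
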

   
   \begin{proof}
   	The median algebra $X$ has rank one and all its intervals are
   	compact arcs. However, $\tau_{\mathrm m}\subsetneq\tau_d$ by
   	Example~\ref{ex:MetricHedgehog}. Hence
   	Proposition~\ref{p:rigid}  applies.
   \end{proof}
   
   The stronger equivariant fact that the metric hedgehog admits a
   continuous action of a Polish group having no proper
   $G$-compactification was established in \cite{Me-88}. 
 
 \sk 
The proof of Proposition~\ref{p:rigid} shows that every proper
median compactification
\(
\eta\colon X\hookrightarrow K
\)
with \(K\in(T^m_2)\) is canonically isomorphic to the MMC.
Consequently, any failure of uniqueness beyond finite rank must
come from a proper median compactification whose intrinsic topology
is not Hausdorff.

\begin{question}[Rigidity beyond finite rank]
	\label{q:RigidityMMC}
	Let $X\in(T^m_2)$, and suppose that every interval in $X$ is
	compact in the intrinsic topology $\tau_{\mathrm m}$. Must every proper
	median compactification
	\(
	\eta\colon X\hookrightarrow K
	\)
	satisfy $K\in(T^m_2)$? Equivalently, is the MMC the unique
	proper median compactification of $(X,\tau_{\mathrm m})$?
\end{question}
 
\section{Further properties of the MMC}
\label{s:examples} 

\subsection*{Comparison of the intrinsic, metric and CW topologies}

By Lemma~\ref{l:LeastToplogy}, for every median metric space
$(X,m,d)$ one has
\(
\tau_{\mathrm m}\subseteq\tau_d.
\)
Theorem~\ref{t:coincidence} gives a broad class of spaces for which
equality holds. The metric hedgehog of
Example~\ref{ex:MetricHedgehog} shows that the local compactness
assumption cannot be omitted, even for a complete, connected,
locally convex rank-one median metric space with compact intervals.

\begin{remark}[Comparison with the CW-topology]
	Let $X$ be a finite-dimensional $\mathrm{CAT}(0)$ cube complex
	endowed with its cubical $\ell^1$-metric, and let
	$\tau_{\mathrm{CW}}$ denote its CW-topology. Thus a subset
	$U\subseteq X$ is $\tau_{\mathrm{CW}}$-open if and only if
	$U\cap C$ is open in every closed cube $C$; see
	\cite[Example~12.5 and Section~17.1]{BowditchMedian}. Then
	\[
	\tau_{\mathrm m}\subseteq\tau_d\subseteq\tau_{\mathrm{CW}}.
	\]
	The first inclusion follows from
	Lemma~\ref{l:LeastToplogy}. The second follows because the
	restriction of the cubical $\ell^1$-metric to every cube
	induces its standard Euclidean topology.
	
	If $X$ is locally compact, then
	Theorem~\ref{t:coincidence} gives $\tau_{\mathrm m}=\tau_d$.
	Moreover, a locally compact finite-dimensional
	$\mathrm{CAT}(0)$ cube complex is locally finite, and hence
	$\tau_d=\tau_{\mathrm{CW}}$. Therefore
	$
	\tau_{\mathrm m}=\tau_d=\tau_{\mathrm{CW}}.
	$ 
	
	Without local compactness both inclusions may be strict. For the
	metric hedgehog of Example~\ref{ex:MetricHedgehog}, one has
	$
	\tau_{\mathrm m}\subsetneq\tau_d\subsetneq\tau_{\mathrm{CW}}.
	$ 
	Indeed, the first strict inclusion was proved above. For the
	second, choose points $x_n$ on distinct arms with
	$d(x_n,0)=1/n$. The set
	$X\setminus\{x_n:n\in\mathbb N\}$ is CW-open but contains no
	metric neighborhood of the origin.
\end{remark}

\subsection*{Metrizability of the MMC}
We begin with the linearly ordered case; its proof also serves as a prototype for Theorem~\ref{t:metrizableMMC}.

\begin{lem} \label{l:SecondCountableLOTSUniformity}
	Let \((L,\leq,\lambda_\leq)\) be a separable metrizable LOTS. Then its interval
	uniformity \(\mu_\leq\) has a countable base and its Dedekind compactification is metrizable.
\end{lem}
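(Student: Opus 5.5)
By Lemma~\ref{l:LOTSSubbase}, $\mu_\leq$ has a uniform subbase consisting of the two-ray covers $\mathcal O_{u,v}=\{L_v,R_u\}$ with $u<v$. I will therefore look for a countable family $\Gamma$ of such covers such that every $\mathcal O_{u,v}$ is refined by a finite wedge of members of $\Gamma$. Once this is done, $\Gamma^\wedge$ is a countable base of $\mu_\leq$. A precompact Hausdorff uniformity with a countable base is metrizable, and so is its completion. Since the completion is the Dedekind compactification (Fact~\ref{f:DedekindUniformityInternal}), the compactum is metrizable.

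\emph{Choice of the countable set.} Separability gives a countable dense set $D\subseteq L$. A LOTS can have jumps, i.e. pairs $a<b$ with $(a,b)=\emptyset$, and such a gap is not detected by $D$. So I also use that a separable metrizable LOTS has only countably many jumps. For each jump $(a,b)$ the set $\{a\}\cup(b,\infty)$ is open, and hence $(-\infty,a]$ is open. Taking a point $d\in D\cap(-\infty,a]$ gives an injection from the set of jumps into $D$ after a standard choice (e.g. assign to each jump the left endpoint $a$, which is either isolated from the right or approached by $D$). Alternatively one argues via a countable base: each jump yields the distinct open set $(-\infty,b)=(-\infty,a]$, and these sets are pairwise distinct, so there are countably many since $L$ is hereditarily Lindel\"of. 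Put $E=D\cup\{\text{endpoints of all jumps}\}$, which is countable, and let $\Gamma=\{\mathcal O_{u,v}: u<v,\ u,v\in E\}$.

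\emph{Refinement step, which I expect to be the main point.} Fix $u<v$ in $L$. I need a finite wedge from $\Gamma$ refining $\{L_v,R_u\}$.

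Case 1: $(u,v)\neq\emptyset$. If $(u,v)$ contains two points $e<e'$ of $E$ (for instance because it is a nonempty open set and so meets $D$), then $\mathcal O_{e,e'}=\{L_{e'},R_e\}$ refines $\mathcal O_{u,v}$, since $L_{e'}\subseteq L_v$ and $R_e\subseteq R_u$. If $(u,v)\cap E$ is a single point $e$, I use instead the cover $\mathcal O_{e,e}$-type wedge $\{L_e\}\cup\{R_e\}\cup\{\{e\}\}$. Here the issue is openness of $\{e\}$ and of the rays $(-\infty,e]$ and $[e,\infty)$. These conditions say exactly that $u<e$ and $e<v$ are jumps, so $u,v\in E$, and $\mathcal O_{u,v}$ itself lies in $\Gamma$.

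Case 2: $(u,v)=\emptyset$. Then $(u,v)$ is a jump, so $u,v\in E$ and again $\mathcal O_{u,v}\in\Gamma$.

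In general, the case analysis should show that either there are $e<e'$ in $E\cap[u,v]$ with $\mathcal O_{e,e'}$ refining $\mathcal O_{u,v}$ (note that $L_{e'}\subseteq L_v$ and $R_e\subseteq R_u$ whenever $u\le e<e'\le v$), or the configuration consists of jumps, whose endpoints lie in $E$.

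Hence $\Gamma$ is a countable subbase, $\mu_\leq$ has a countable base, and the Dedekind compactification is metrizable. The delicate part is the countability of the jumps together with the bookkeeping when $(u,v)$ contains at most one point of $D$; I would handle it as above by including all jump endpoints in $E$.
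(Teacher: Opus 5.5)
\textbf{There is a genuine gap: the countability of the jumps.} Your overall strategy is the paper's: take $E=D\cup\{\text{jump endpoints}\}$ and refine the two-ray covers of Lemma~\ref{l:LOTSSubbase}. But the countability of the jumps, which you flag as delicate, is not proved. This is exactly the one place where metrizability must be used, and neither of your arguments uses it.

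\begin{itemize}
\item Your first argument contains a false claim: $\{a\}\cup(b,\infty)$ need not be open (take $L=[0,1]\cup[2,3]$, $a=1$, $b=2$). Also, choosing some $d\in D\cap(-\infty,a]$ gives no injection, since many jumps may share the same $d$.
\item Your second argument infers countability from the sets $(-\infty,a]$ being pairwise distinct open sets in a hereditarily Lindel\"of space. That inference is invalid: the rays $(-\infty,x)$, $x\in\mathbb R$, are uncountably many distinct open subsets of $\mathbb R$.
\item More decisively, both arguments use only separability and hereditary Lindel\"ofness, and these cannot suffice. The double arrow space $\bigl((0,1]\times\{0\}\bigr)\cup\bigl([0,1)\times\{1\}\bigr)$ with the lexicographic order is a compact, separable, hereditarily Lindel\"of LOTS. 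It has uncountably many jumps $(x,0)<(x,1)$. It is its own Dedekind compactification and is not metrizable, so the conclusion of the lemma fails there.
\end{itemize}

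\textbf{The missing idea is to use a countable base $\mathcal B$} (second countability, i.e.\ separable plus metrizable). If $a<b$ is a jump, then $(-\infty,b)=(-\infty,a]$ is an open neighborhood of $a$. So some $B_a\in\mathcal B$ satisfies $a\in B_a\subseteq(-\infty,a]$, which forces $a=\max B_a$; hence $a\mapsto B_a$ is injective. Symmetrically, some $B'_b\in\mathcal B$ satisfies $b\in B'_b\subseteq(a,\infty)=[b,\infty)$, so $b=\min B'_b$. This is essentially the paper's argument, which encodes each adjacent pair by a pair $(U,V)$ of basic convex sets with $a=\max U$ and $b=\min V$.

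\textbf{Your refinement step is correct in substance once this is fixed,} but it should be justified by density rather than by openness of $\{e\}$. If $(u,v)\cap E=\{e\}$, then $(u,e)$ and $(e,v)$ are open sets missing $D$, hence empty. So $u<e$ and $e<v$ are jumps, $u,v\in E$, and $\mathcal O_{u,v}\in\Gamma$ directly; the ``$\mathcal O_{e,e}$-type wedge'' plays no role. The paper does this bookkeeping slightly more directly: pick $a\in D\cap(u,v)$, then $b\in D\cap(a,v)$ if that set is nonempty, and $b=v$ (an adjacent endpoint, hence in $E$) otherwise.
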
 
\begin{proof}
Since \(L\) is a separable metrizable LOTS, it is second-countable and admits a countable base consisting of convex open sets. 	
	
	We first note
	that the set of adjacent pairs in \(L\) is at most countable. Indeed, \(L\) has
	a countable convex base. For every adjacent pair \(a<b\), choose basic convex
	neighborhoods \(U,V\) such that
	\[
	a\in U\subseteq (-\infty,b), \qquad b\in V\subseteq (a,\infty).
	\]
	The pair \((U,V)\) determines \(a,b\): since \(U\) is convex, contains \(a\), and
	is contained in \((-\infty,b)\), the point \(a\) is the maximum of \(U\); similarly,
	\(b\) is the minimum of \(V\). Hence only countably many adjacent pairs exist.
	
	Let \(D\subseteq L\) be a countable dense subset, let \(E\) be the set of all
	endpoints of adjacent pairs, and put \(A:=D\cup E\). Then \(A\) is countable.
	We claim that the two-ray covers
	\[
	\mathcal O_{a,b}=\{(-\infty,b),(a,\infty)\},\qquad a<b,\quad a,b\in A,
	\]
	form a countable uniform subbase of \(\mu_\leq\).
	
	By Lemma~\ref{l:LOTSSubbase}, it is enough to show that every two-ray cover
	\(\mathcal O_{u,v}\), \(u<v\), is refined by some \(\mathcal O_{a,b}\) with
	\(a,b\in A\). If \(u,v\) are adjacent, then \(u,v\in E\), and we take
	\(a=u,b=v\). If \((u,v)\neq\emptyset\), choose \(a\in D\cap(u,v)\). If
	\((a,v)\neq\emptyset\), choose \(b\in D\cap(a,v)\); otherwise \(a,v\) are
	adjacent, so \(v\in E\), and we take \(b=v\). In either case
	\(
	u\leq a<b\leq v,
	\) 
	and therefore   
	$
	(-\infty,b)\subseteq (-\infty,v),\ (a,\infty)\subseteq (u,\infty).
	$
	Thus \(\mathcal O_{a,b}\succ \mathcal O_{u,v}\).
	
	Hence (the precompact) uniformity $\mu_\leq$ has a countable uniform base, and therefore its
	completion is compact metrizable. By
	Fact~\ref{f:DedekindUniformityInternal}, this completion is the
	Dedekind compactification of $L$. 
\end{proof}

\begin{f} \label{f:CountableInitialSubfamily}
	Let \(X\) be a second-countable space whose topology is initial with respect to a
	family of maps \(f_i \colon X\to Y_i\), \(i\in I\). Then there exists a countable
	subfamily \(I_0\subseteq I\) such that the topology of \(X\) is already initial
	with respect to the maps \(f_i\), \(i\in I_0\).
\end{f}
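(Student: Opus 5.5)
The plan is a standard second-countability argument. Let $X$ be second countable with a countable base $\mathcal V=\{V_n\}_{n\in\mathbb N}$. Let $\mathcal S$ be the canonical subbase of the initial topology,
\[
\mathcal S=\{f_i^{-1}(W): i\in I,\ W \text{ open in } Y_i\}.
\]
Finite intersections of members of $\mathcal S$ form a base $\mathcal B$ for the topology of $X$. Each member of $\mathcal B$ involves only finitely many indices $i$.

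First I will show that the countable base $\mathcal V$ can be "approximated" from inside by countably many members of $\mathcal B$. Call a pair $(n,k)\in\mathbb N^2$ \emph{admissible} if there is some $B\in\mathcal B$ with
\[
V_n\subseteq B\subseteq V_k.
\]
For each admissible pair choose one such $B_{n,k}$. Choose also a finite set $F_{n,k}\subseteq I$ of indices so that $B_{n,k}$ is a finite intersection of sets $f_i^{-1}(W)$ with $i\in F_{n,k}$. Then put
\[
I_0:=\bigcup_{(n,k)\ \mathrm{admissible}}F_{n,k},
\]
which is countable.

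Next I verify that $I_0$ works. Let $\tau_0$ be the initial topology for $\{f_i\}_{i\in I_0}$. Clearly $\tau_0$ is contained in the topology $\tau$ of $X$, and every $B_{n,k}$ is $\tau_0$-open.

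Conversely, let $U\in\tau$ and $x\in U$. I carry out three steps.
\begin{enumerate}
\item Pick $k$ with $x\in V_k\subseteq U$, using that $\mathcal V$ is a base.
\item Since $\mathcal B$ is a base and $V_k$ is open, pick $B\in\mathcal B$ with $x\in B\subseteq V_k$.
\item Since $B$ is open, pick $n$ with $x\in V_n\subseteq B$.
\end{enumerate}
Then $(n,k)$ is admissible, and
\[
x\in V_n\subseteq B_{n,k}\subseteq V_k\subseteq U.
\]
Hence $U$ is a union of $\tau_0$-open sets, so $\tau=\tau_0$.

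The only subtle point is the ordering of choices. One must select $B_{n,k}$ depending only on the pair $(n,k)$, not on the point $x$, so that the index set remains countable. The sandwich trick $V_n\subseteq B\subseteq V_k$ is exactly what achieves this, and I expect no further obstacle.
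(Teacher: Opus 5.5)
Your proof is correct. It differs from the paper's only in how countability is extracted.

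The paper fixes a countable base $\mathcal B$. For each $B\in\mathcal B$ and each $x\in B$ it picks a finite intersection $S_{x,B}$ of subbasic sets with $x\in S_{x,B}\subseteq B$. It then uses that $B$, as a subspace of a second-countable space, is Lindel\"of, so countably many of these sets already cover $B$. Each member of $\mathcal B$ is then a union of selected sets, and only countably many indices are involved.

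You avoid the Lindel\"of property. You index the selected basic sets by admissible pairs $(n,k)$ via the sandwich $V_n\subseteq B_{n,k}\subseteq V_k$, which is the classical argument that every base of a second-countable space contains a countable base. What your route buys is that it is fully elementary and self-contained, and it shows directly that $\{B_{n,k}\}$ is itself a countable base of $X$ consisting of basic open sets of the initial topology. The paper's route is a little shorter once Lindel\"ofness of open subspaces is taken as known, and it shows instead that each member of the given base is a countable union of selected finite intersections.

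Either version is enough for the application of this fact in Theorem~\ref{t:metrizableMMC}.
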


\begin{proof}
	Let \(\mathcal S\) be the subbase of \(X\) consisting of all sets
	\(f_i^{-1}(O)\), where \(O\subseteq Y_i\) is open. Let \(\mathcal B\) be a
	countable base of \(X\). For every \(B\in\mathcal B\) and every \(x\in B\), choose
	a finite intersection \(S_{x,B}\) of members of \(\mathcal S\) such that
	$
	x\in S_{x,B}\subseteq B.
    $
Since $B$, as a subspace of the second-countable space $X$, is
Lindelöf, choose countably many such finite intersections covering \(B\). Doing this for all \(B\in\mathcal B\), we
	use only countably many subbasic sets, and hence only countably many maps \(f_i\).
	The topology generated by these selected maps contains every member of
	\(\mathcal B\), and therefore is the original topology.
\end{proof}

\begin{thm} \label{t:metrizableMMC}
	Let \((X,m)\) be a median algebra such that \((X,\tau_{\mathrm m})\) is separable and metrizable. Then the MMC \(\widehat X\) is metrizable. 
\end{thm}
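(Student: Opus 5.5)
The strategy is to show that $\mathcal U_{\mathrm m}$ has a countable base. A precompact Hausdorff uniformity with a countable base has a metrizable completion, and that completion is the compact $\widehat X$. First note that $(X,\tau_{\mathrm m})$ is metrizable, hence Hausdorff, so $X\in(T^m_2)$ by Corollary~\ref{l:TychTop} and the MMC is defined. By Theorem~\ref{t:InitialUniformity}, $\tau_{\mathrm m}$ is the initial topology of the retractions $\phi_{u,v}\colon X\to([u,v],\lambda_\le)$, where $[u,v]$ ranges over nontrivial chain intervals. Since $X$ is second countable, Fact~\ref{f:CountableInitialSubfamily} yields countably many chain intervals $C_n=[u_n,v_n]$ whose retractions $\phi_n:=\phi_{u_n,v_n}$ already generate $\tau_{\mathrm m}$.

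Next I would handle each chain separately. Each $C_n$ is convex, so by Proposition~\ref{p:MMCconvex} and Proposition~\ref{p:LOTS_case}(1) its subspace topology from $\tau_{\mathrm m}$ is the interval topology, and $\mathcal U_{\mathrm m}(X)|_{C_n}=\mu_\le(C_n)$. Thus $C_n$ is a separable metrizable LOTS. By Lemma~\ref{l:SecondCountableLOTSUniformity}, $\mu_\le(C_n)$ has a countable subbase of two-ray covers $\mathcal O_{a,b}$ with $a,b$ in a countable set $A_n\subseteq C_n$. The proof of Theorem~\ref{t:InitialUniformity} gives $\phi_n^{-1}(\mathcal O_{a,b})=\mathcal B_{a,b}$. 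Let $\mathcal U_0$ be the initial uniformity of the countable family $\{\phi_n\}$. It is generated by the countable family $\{\mathcal B_{a,b}: a,b\in A_n,\ n\in\N\}$, so it has a countable base, and $\mathcal U_0\subseteq\mathcal U_{\mathrm m}$. It is Hausdorff because $\operatorname{top}(\mathcal U_0)=\tau_{\mathrm m}$ is Hausdorff, and it is precompact.

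The main obstacle is proving $\mathcal U_0=\mathcal U_{\mathrm m}$. Equality of the induced topologies does not by itself force equality of precompact uniformities. I would use the compactification order together with Theorem~\ref{t:MMC}(2). Each $\phi_n$ is uniformly continuous for the median-compatible structure (Theorem~\ref{t:UniformMedian}), but $\mathcal U_0$ itself need not be median-compatible.

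To fix this, I would enlarge $\mathcal U_0$ to a countably generated median-compatible uniformity. Let $D\subseteq X$ be a countable set containing all $A_n$ and the $u_n,v_n$, and close it under the median to obtain a countable subalgebra $M$. Take the countable family of covers $\mathcal B_{a,b}$ over all pairs $a,b\in M$ with $[a,b]$ a chain. These are convex, so, as in Theorem~\ref{t:UniformMedian}, the median is uniformly continuous for the resulting uniformity $\mathcal U_1$, with $\mathcal U_0\subseteq\mathcal U_1\subseteq\mathcal U_{\mathrm m}$. Then $\mathcal U_1$ is Hausdorff, precompact, median-compatible, countably based and induces $\tau_{\mathrm m}$. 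By Proposition~\ref{p:Median-compactif} its completion is a proper median compactification $K$ of $(X,\tau_{\mathrm m})$, and $K$ is metrizable. Theorem~\ref{t:MMC}(2) gives a continuous surjection $K\to\widehat X$. A continuous image of a compact metrizable space in a Hausdorff space is metrizable, so $\widehat X$ is metrizable.

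This route avoids proving $\mathcal U_1=\mathcal U_{\mathrm m}$. The delicate points to verify are that the chain-interval covers over $M$ still yield uniform continuity of $m$ (they do, by convexity, exactly as in Theorem~\ref{t:UniformMedian}) and that $\operatorname{top}(\mathcal U_1)=\tau_{\mathrm m}$, which follows from $\mathcal U_0\subseteq\mathcal U_1\subseteq\mathcal U_{\mathrm m}$.
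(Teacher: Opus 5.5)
Your argument up to $\mathcal U_0$ and your closing step follow the paper's proof. However, the ``main obstacle'' you identify is not real, and the detour you take to avoid it rests on an unproved claim.

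First, $\mathcal U_0$ \emph{is} median-compatible. The paper notes that $\phi_n\circ m_X=m_{C_n}\circ(\phi_n,\phi_n,\phi_n)$, because each $\phi_n$ is a median homomorphism. It then uses that $m_{C_n}$ is uniformly continuous for $\mu_\le(C_n)$, by Proposition~\ref{p:LOTS_case}(1) and Theorem~\ref{t:UniformMedian}. Since $\mathcal U_0$ is initial for $\{\phi_n\}$, $m_X$ is uniformly continuous. Your own convexity argument gives the same conclusion directly. Indeed, $\mathcal U_0$ has the subbase $\{\mathcal B_{a,b}: a\neq b,\ a,b\in A_n,\ n\in\N\}$, whose members are two-member covers by convex sets, and $\mathcal B_{a,b}^3\succ m^{-1}(\mathcal B_{a,b})$ for each of them. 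So you can simply complete $(X,\mathcal U_0)$.

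Second, the step that is actually missing is the existence of $\mathcal U_1$. You take ``the resulting uniformity'' of the countable family $\mathcal F_M=\{\mathcal B_{a,b}: a,b\in M,\ [a,b]\text{ a chain}\}$. But a family of two-member covers is a uniform subbase only if every member is star-refined by a finite meet of members of the family. For example, the single cover $\{(-\infty,1),(0,\infty)\}$ of $\R$ generates no uniformity. For non-adjacent $a<b$ in $M$, the natural star-refinement of $\mathcal B_{a,b}$ needs points of $M$ strictly between $a$ and $b$, and you never produce them.

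The claim is in fact true here, in two ways:
\begin{itemize}
\item One can show $(a,b)\cap M\neq\emptyset$ whenever $(a,b)\neq\emptyset$. This uses that the $\phi_n$ separate points, that $\phi_n(M)\subseteq M$, that $A_n$ is dense in $C_n$, and Lemma~\ref{l:compositionOFgates}.
\item It also follows a posteriori from $\mathcal U_0=\mathcal U_{\mathrm m}$, which the paper's argument yields. But that route uses exactly the median-compatibility of $\mathcal U_0$ that you set out to avoid.
\end{itemize}
Dropping $\mathcal U_1$ removes the problem.

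Your final step is correct: the continuous image of the compact metrizable $K$ in the Hausdorff space $\widehat X$ is metrizable. It is slightly shorter than the paper's, which additionally builds $r\colon\widehat X\to K$ from $\mathcal U'\subseteq\mathcal U_{\mathrm m}$ to conclude $K\cong\widehat X$.
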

\begin{proof}
	Since \((X,\tau_{\mathrm m})\) is metrizable, the intrinsic
	uniformity \(\mathcal U_m\) is Hausdorff, and the MMC \(\widehat X\) exists. 
	By Theorem~\ref{t:InitialUniformity}, the topology \(\tau_{\mathrm m}\) is the initial
	topology generated by all canonical retractions
	$
	\phi_{u,v} \colon X\to ([u,v],\lambda_{\leq_u}),
	$
	where \([u,v]\) runs over all nontrivial chain intervals of \(X\). Since
	\((X,\tau_{\mathrm m})\) is second-countable, Fact~\ref{f:CountableInitialSubfamily}
	yields a countable family of such retractions
	$$
	\Phi:=\{\phi_n \colon X\to C_n=[u_n,v_n], \ n\in\mathbb N\},$$ 
	which already generates the topology \(\tau_{\mathrm m}\).
	
	Each \(C_n\) is a convex subalgebra of \(X\). By 
	Proposition~\ref{p:MMCconvex}, 
	$
	\tau_{\mathrm m}(C_n)=\tau_{\mathrm m}(X)|_{C_n}.
	$ 
	By Proposition~\ref{p:LOTS_case}, this topology is just the interval topology
	\(\lambda_{\leq_{u_n}}\) on the chain \(C_n\). Since \(X\) is second-countable,
	each \(C_n\) is second-countable. Hence, by
	Lemma~\ref{l:SecondCountableLOTSUniformity}, the interval uniformity
	\(\mu_{\leq_{u_n}}\) on \(C_n\) has a countable base.

	Let \(\mathcal U'\) be the initial uniformity on \(X\) generated by the countable family $\Phi$. 
	It has a countable uniform base, because the family is countable and each
	\((C_n,\mu_{\leq_{u_n}})\) has a countable uniform base. It is precompact, because
	it is the initial uniformity induced by the diagonal map into the product of the
	precompact uniform spaces \(C_n\).

	Moreover, \(\operatorname{top}(\mathcal U')=\tau_{\mathrm m}\), because the
	maps \(\phi_n\) were chosen to generate \(\tau_{\mathrm m}\) topologically.  
	Since \(\tau_{\mathrm m}\) is Hausdorff, the uniformity \(\mathcal U'\) is Hausdorff.  
	Also, $\mathcal U'\subseteq \mathcal U_m,$
	since \(\mathcal U_m\) is the initial uniformity generated by all chain
	retractions.

	The uniformity \(\mathcal U'\) is median-compatible. Indeed, since
	\(\mathcal U'\) is initial with respect to the family of maps $\Phi$, 
	it is enough to show that \(\phi_n\circ m_X \colon X^3\to C_n\) is uniformly continuous
	for every \(n\). Since \(\phi_n\) is a median homomorphism,
	$
	\phi_n\circ m_X
	=
	m_{C_n}\circ(\phi_n,\phi_n,\phi_n).
	$
	
	The map \((\phi_n,\phi_n,\phi_n): X^3 \to C_n^3\) is uniformly continuous by the
	definition of the initial uniformity. By Proposition~\ref{p:LOTS_case} and
	Theorem~\ref{t:UniformMedian}, the median operation \(m_{C_n}\) is
	uniformly continuous with respect to \(\mu_{\leq_{u_n}}\). 
	Therefore \(\phi_n\circ m_X\) is uniformly continuous for every \(n\). Hence,
	by the defining property of initial uniformities, \(m_X\) is uniformly continuous
	with respect to \(\mathcal U'\). 
	
	Let \(K\) be the completion of \((X,\mathcal U')\). Since \(\mathcal U'\) is
	Hausdorff, precompact, and has a countable base, \(K\) is compact metrizable.
	The median operation extends continuously to \(K\), so \(K\) is a compact
	metrizable median algebra. Since \(\operatorname{top}(\mathcal U')=\tau_{\mathrm m}\), the
	natural map \(X\to K\) is a proper median compactification of \((X,\tau_{\mathrm m})\).
	
	By the minimality of the MMC, there exists a continuous median-preserving surjection $q:K\to \widehat X$
	which is the identity on \(X\). On the other hand, since
	\(\mathcal U'\subseteq \mathcal U_m\), the identity map
	$
	(X,\mathcal U_m)\to (X,\mathcal U')
	$
	is uniformly continuous, and therefore extends to a continuous surjection
	$
	r \colon \widehat X\to K
	$
	which is also the identity on \(X\). The compositions \(q\circ r\) and
	\(r\circ q\) are continuous maps which restrict to the identity on the dense copy
	of \(X\). Therefore
	$
	q\circ r=\operatorname{id}_{\widehat X}, \  r\circ q=\operatorname{id}_K.
	$
	Thus \(K\) and \(\widehat X\) are isomorphic as compact median algebras. Since
	\(K\) is metrizable, so is \(\widehat X\).
\end{proof}

\begin{prop}\label{p:MetrizableCountableMMC}
	Let \(X\) be an a-discrete median algebra, and let \(\widehat X\) be its MMC.
	Then \(\widehat X\) is metrizable if and only if \(X\) is countable.
\end{prop}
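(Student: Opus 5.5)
By Theorem~\ref{thm:unique_compactification}, the MMC $\widehat X$ of an a-discrete median algebra is canonically the Roller compactification $X^R=\overline{\iota(X)}\subseteq\{0,1\}^{\mathcal H(X)}$. I will therefore prove that $X^R$ is metrizable if and only if $X$ is countable. The key quantity is the cardinality of the set $\mathcal W(X)$ of walls. For an a-discrete algebra, every wall separates some adjacent pair (this was shown in the proof of Theorem~\ref{thm:unique_compactification}). Moreover, by \cite[Lemma~8.1.1(6)]{BowditchMedian}, each adjacent pair determines a unique wall. Hence $|\mathcal W(X)|$ is at most the number of adjacent pairs, so at most $|X|^2$.

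\emph{Countable $\Rightarrow$ metrizable.} If $X$ is countable, then $\mathcal W(X)$ is countable, and so $\mathcal H(X)$ is countable. Thus $X^R$ is a closed subspace of the countable Cantor cube $\{0,1\}^{\mathcal H(X)}$, and hence is metrizable. Alternatively, one could use Theorem~\ref{t:metrizableMMC}: the countably many adjacent-pair branches form a countable subbase (Remark~\ref{rem:topology_equivalence}), so $(X,\tau_{\mathrm m})$ is a countable regular second-countable space. Hence it is separable metrizable, and $\widehat X$ is metrizable. The direct Cantor-cube argument is cleaner and is the one I would use.

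\emph{Metrizable $\Rightarrow$ countable.} Suppose $\widehat X$ is metrizable. Being compact, it is then second countable, so its subspace $(X,\tau_{\mathrm m})$ is second countable as well. I then want to show that $\mathcal W(X)$ is countable, and afterwards that $X$ is countable.

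For the first step, each halfspace $H$ of $X$ extends to a clopen set $\widetilde H$ of $\widehat X$, by the proof of Theorem~\ref{thm:unique_compactification}. Distinct halfspaces give distinct clopen sets, since their traces on the dense set $X$ differ. A compact metrizable space has only countably many clopen sets, because each clopen set is a finite union of members of a countable base. Hence $\mathcal H(X)$ is countable.

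For the second step, fix a base point $x_0\in X$. Each point $x$ is determined by the finite set of walls separating $x_0$ from $x$. This set is finite because $[x_0,x]$ is finite and every such wall separates an adjacent pair on a maximal chain in $[x_0,x]$. Indeed, halfspaces separate points, so the map $x\mapsto\{W: W \text{ separates } x_0,x\}$ is injective. It therefore maps $X$ injectively into the countable family of finite subsets of the countable set $\mathcal W(X)$. So $X$ is countable.

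\emph{Main obstacle.} The delicate step is making sure the halfspaces of $X$ really correspond injectively to clopen subsets of $\widehat X$, and that the countability of clopen sets is justified. The first part I take from the wall-extension paragraph of Theorem~\ref{thm:unique_compactification}, together with density of $X$. Note also that the argument avoids the trap of using the discreteness of $\tau_{\mathrm m}$, since $\tau_{\mathrm m}$ need not be discrete; see Proposition~\ref{prop:topologically_discrete_criterion}. This matters because, without local finiteness, second countability alone does not directly bound $|X|$; the wall-counting argument is what delivers it.
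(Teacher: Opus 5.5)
Your proof is correct, and for the substantive direction (metrizable $\Rightarrow$ countable) it is exactly the paper's argument: extend halfspaces to clopen subsets of the Roller compactification, note that a compact metrizable space has only countably many clopen sets, and inject $X$ into the finite subsets of the countable set of walls via $x\mapsto\mathcal W(x_0\mid x)$. For the easy direction the paper argues more directly, without the Roller identification: if $X$ is countable, the subbasic covers $\mathcal B_{u,v}$ are indexed by countably many pairs, so $\mathcal U_{\mathrm m}$ is a Hausdorff precompact uniformity with a countable base. Your route, which counts walls via adjacent pairs and embeds $X^R$ in a countable Cantor cube, is equally valid but relies on more machinery.
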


\begin{proof}
	Assume first that \(X\) is countable. Then the family of all pairs \((u,v)\in X^2\)
	is countable. Hence the family of all subbasic intrinsic covers
	$
	\mathcal B_{u,v}=\{B^u_v,B^v_u\},
	$
	where \([u,v]\) is a nontrivial chain interval, is countable. Therefore
	\(\mathcal U_m\) has a countable base, namely the finite common refinements of
	these subbasic covers. Since \(X\) is a-discrete, Theorem~\ref{t:DISCRETEisCV}
	implies that \(\mathcal U_m\) is Hausdorff. Thus \(\mathcal U_m\) is a Hausdorff
	precompact uniformity with a countable base. Consequently its completion
	\(\widehat X\) is compact metrizable.
	
	Conversely, assume that \(\widehat X\) is metrizable. Since \(X\) is a-discrete,
	Theorem~\ref{thm:unique_compactification} identifies \(\widehat X\) with the
	Roller compactification of \(X\). In particular, every halfspace of \(X\) extends
	to a clopen halfspace of \(\widehat X\). 
	A compact metrizable space has only countably many clopen subsets. 
	Therefore \(X\) has at most countably many halfspaces, and hence at most countably
	many walls.
	
	Fix $x_0\in X$. For every $x\in X$, let
	$\mathcal W(x_0\mid x)$ denote the set of all walls separating
	$x_0$ and $x$. Since \(X\) is a-discrete, the interval
	\([x_0,x]\) is finite. Hence 
	$\mathcal W(x_0\mid x)$ is finite: every wall separating
	\(x_0\) and \(x\) separates some adjacent pair in the finite interval \([x_0,x]\),
	and each adjacent pair determines a unique wall.  
	The map
	\[
	X\to[\mathcal W(X)]^{<\omega},
	\qquad
	x\mapsto\mathcal W(x_0\mid x),
	\]
	is injective: if $x\neq y$, then a wall separating $x$ and $y$
	belongs to exactly one of
	$\mathcal W(x_0\mid x)$ and $\mathcal W(x_0\mid y)$.
	Since $\mathcal W(X)$ is countable,
	$[\mathcal W(X)]^{<\omega}$ is countable. Therefore $X$ is
	countable.
\end{proof}

\subsection*{Productivity of MMC} 

\begin{prop} \label{p:UmProduct}
	Let \(\{X_i\}_{i\in I}\) be any family of median algebras, and let $X:=\prod_{i\in I} X_i$
	be their product median algebra. Then we have the product formula for the m-uniformity 
	\[
	\mathcal U_m\left(\prod_{i\in I}X_i\right)
	=
	\prod_{i\in I}\mathcal U_m(X_i).
	\]
 	where the right-hand side is the product uniformity.
\end{prop}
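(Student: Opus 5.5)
The plan is to identify the nontrivial chain intervals of $X=\prod_i X_i$ and then compare subbasic covers on both sides.

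\textbf{Step 1: chain intervals of the product.} Intervals in a product are computed coordinatewise: $[u,v]_X=\prod_i [u_i,v_i]_{X_i}$. If $u$ and $v$ differ in two coordinates $i\neq j$, then $[u,v]$ contains a median copy of $\{0,1\}^2$. Concretely, take the four points obtained from $u$ by replacing the $i$-th and/or $j$-th coordinate with that of $v$. Hence $[u,v]$ is not a chain. So a nontrivial chain interval $[u,v]$ in $X$ has $u,v$ differing in exactly one coordinate $i$, and then $[u,v]\cong[u_i,v_i]_{X_i}$ must be a nontrivial chain in $X_i$. Conversely, if $[a,b]_{X_i}$ is a nontrivial chain and $u,v\in X$ agree off $i$ with $u_i=a$, $v_i=b$, then $[u,v]$ is a nontrivial chain interval.

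\textbf{Step 2: branches.} For such $u,v$ we have $\phi_{u,v}(x)=m(u,x,v)$, which has $j$-th coordinate $u_j$ for $j\neq i$ and $i$-th coordinate $\phi_{a,b}(x_i)$. Therefore $\phi_{u,v}(x)=u$ iff $\phi_{a,b}(x_i)=a$, and $\phi_{u,v}(x)=v$ iff $\phi_{a,b}(x_i)=b$. Consequently
\[
B^u_v(X)=\pi_i^{-1}\bigl(B^a_b(X_i)\bigr),\qquad B^v_u(X)=\pi_i^{-1}\bigl(B^b_a(X_i)\bigr),
\]
that is, $\mathcal B_{u,v}(X)=\pi_i^{-1}(\mathcal B_{a,b}(X_i))$.

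\textbf{Step 3: comparison of subbases.} The product uniformity $\prod_i\mathcal U_m(X_i)$ is the initial uniformity of the projections $\pi_i$. It therefore has as a subbase the pullbacks $\pi_i^{-1}(\mathcal B_{a,b}(X_i))$ of the subbasic covers of each factor, since pulling back preserves finite meets and refinement. By Steps 1 and 2 these pullbacks are exactly the subbasic covers $\mathcal B_{u,v}(X)$ of $\mathcal U_m(X)$. Each pulled-back cover arises from a genuine chain interval of $X$; here we need $X$ nonempty, so that the remaining coordinates of $u$ and $v$ can be chosen, which uses choice. Since the two uniformities share a subbase, they coincide.

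\textbf{Main obstacle.} The only real work is Step 1. One must check that $u,v$ differing in at least two coordinates always yields an embedded $2$-cube, which is a direct coordinatewise median computation. One must also check that $[u,v]$, with all coordinates but one frozen, is median-isomorphic to $[u_i,v_i]_{X_i}$. Degenerate cases need separate care: if some $X_i$ has no nontrivial chain intervals, both sides simply contribute the trivial uniformity in that factor.
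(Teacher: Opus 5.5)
Your proposal is correct and follows essentially the same route as the paper: characterize the nontrivial chain intervals of the product (exactly one differing coordinate, since otherwise an embedded \(\{0,1\}^2\) appears), identify \(\mathcal B_{u,v}\) with the pullback \(p_j^{-1}(\mathcal B_{u_j,v_j})\) of a subbasic cover of the factor, and compare the two subbases in both directions. The only small difference is the order of the argument in Step 1. You get the chain property of the single surviving coordinate interval directly from the isomorphism \([u,v]_X\cong[u_i,v_i]_{X_i}\), whereas the paper first applies Lemma~\ref{l:HomImageChain} to the coordinate projections; your version is slightly more economical.
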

\begin{proof}
	Put
	\(
	X=\prod_{i\in I}X_i
	\)
	and denote by
	\(
	p_i\colon X\to X_i
	\)
	the coordinate projections. For
	\(u=(u_i),v=(v_i)\in X\), the coordinatewise definition of the
	median gives
	$
	[u,v]_X=\prod_{i\in I}[u_i,v_i]_{X_i}.
	$
	We first characterize the chain intervals in \(X\). We claim that
	\([u,v]_X\) is a chain if and only if at most one coordinate
	interval \([u_i,v_i]_{X_i}\) is non-singleton, and every
	non-singleton coordinate interval is a chain. 
	Suppose that \([u,v]_X\) is a chain. For every \(i\in I\), the
	restriction
	$
	p_i|_{[u,v]_X}\colon [u,v]_X\to [u_i,v_i]_{X_i}
	$
	is a surjective median homomorphism. Hence, by
	Lemma~\ref{l:HomImageChain}, each interval
	\([u_i,v_i]_{X_i}\) is a chain.
	
	Assume that two distinct coordinate intervals,
	\([u_i,v_i]_{X_i}\) and \([u_j,v_j]_{X_j}\), are
	non-singleton. Choose
	\[
	a_i\neq b_i\in [u_i,v_i]_{X_i},
	\qquad
	a_j\neq b_j\in [u_j,v_j]_{X_j}.
	\]
	Fixing all remaining coordinates, the four points corresponding
	to
	$
	\{a_i,b_i\}\times\{a_j,b_j\}
	$
	form a median subalgebra of \([u,v]_X\) isomorphic to
	\(\{0,1\}^2\). This is impossible, since every median subalgebra
	of a chain is itself a chain. Therefore at most one coordinate
	interval is non-singleton.
	
	Conversely, suppose that all coordinate intervals are singletons
	except possibly \([u_j,v_j]_{X_j}\), and that this interval is a
	chain. Then the coordinate projection \(p_j\) identifies
	\([u,v]_X\) canonically with \([u_j,v_j]_{X_j}\). Hence
	\([u,v]_X\) is a chain. This proves the claim.
	
	Let
	$
	\mathcal U_{\Pi}:=\prod_{i\in I}\mathcal U_m(X_i)
	$
	be the product uniformity. We first prove
	$
	\mathcal U_{\Pi}\subseteq\mathcal U_m(X).
	$
	Fix \(j\in I\), and let
	\(
	\mathcal B_{a,b}=\{B^a_b,B^b_a\}
	\)
	be a subbasic cover of \(\mathcal U_m(X_j)\), where
	\([a,b]_{X_j}\) is a nontrivial chain interval. For every
	\(i\neq j\), choose \(c_i\in X_i\), and define \(u,v\in X\) by
	\[
	u_j=a,\qquad v_j=b,
	\qquad
	u_i=v_i=c_i\quad(i\neq j).
	\]
	By the preceding characterization, \([u,v]_X\) is a nontrivial
	chain interval.
	
	For \(x=(x_i)\in X\), one has
	$
	\phi_{u,v}(x)
	=
	\bigl(\phi_{u_i,v_i}(x_i)\bigr)_{i\in I}.
	$
	Since \(u_i=v_i\) for every \(i\neq j\), it follows that
	\[
	x\in B^u_v
	\quad\Longleftrightarrow\quad
	x_j\in B^a_b,
	\]
	and similarly
	\[
	x\in B^v_u
	\quad\Longleftrightarrow\quad
	x_j\in B^b_a.
	\]
	Consequently,
	$
	p_j^{-1}(\mathcal B_{a,b})=\mathcal B_{u,v}.
	$
	Thus every coordinate projection
	$
	p_j\colon (X,\mathcal U_m(X))
	\to  (X_j,\mathcal U_m(X_j))
	$
	is uniformly continuous. Since \(\mathcal U_{\Pi}\) is the
	weakest uniformity making all coordinate projections uniformly
	continuous, we obtain
	\(
	\mathcal U_{\Pi}\subseteq\mathcal U_m(X).
	\)
	
	For the reverse inclusion, let
	\(
	\mathcal B_{u,v}=\{B^u_v,B^v_u\}
	\)
	be a subbasic cover of \(\mathcal U_m(X)\). Then
	\([u,v]_X\) is a nontrivial chain interval. By the
	characterization above, there exists a unique \(j\in I\) such
	that \([u_j,v_j]_{X_j}\) is non-singleton; moreover, this
	coordinate interval is a chain and \(u_i=v_i\) for every
	\(i\neq j\). As above,
	$
	\mathcal B_{u,v}
	=
	p_j^{-1}\bigl(\mathcal B_{u_j,v_j}\bigr).
	$
	The right-hand side is a uniform cover for the product uniformity
	\(\mathcal U_{\Pi}\). Hence every subbasic cover of
	\(\mathcal U_m(X)\) belongs to \(\mathcal U_{\Pi}\), and therefore
	$
	\mathcal U_m(X)\subseteq\mathcal U_{\Pi}.
	$
	
	Combining the two inclusions yields
	$
	\mathcal U_m\left(\prod_{i\in I}X_i\right)
	=
	\prod_{i\in I}\mathcal U_m(X_i).
	$
\end{proof}

\begin{prop}\label{p:T2product}
	The class of median algebras satisfying \((T_2^m)\) is closed under arbitrary products.
\end{prop}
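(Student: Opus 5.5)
The plan is to use Proposition~\ref{p:UmProduct} together with the characterization of Hausdorffness in Lemma~\ref{l:medT2Hausdorff}. Let $\{X_i\}_{i\in I}$ be median algebras with $X_i\in(T^m_2)$ for every $i$, and put $X=\prod_{i\in I}X_i$. By Lemma~\ref{l:medT2Hausdorff}, each $\mathcal U_m(X_i)$ is Hausdorff. By Proposition~\ref{p:UmProduct}, $\mathcal U_m(X)$ is the product uniformity $\prod_{i}\mathcal U_m(X_i)$. A product of Hausdorff uniformities is Hausdorff, so $\mathcal U_m(X)$ is Hausdorff. Applying Lemma~\ref{l:medT2Hausdorff} again gives $X\in(T^m_2)$.

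For a self-contained argument that avoids the general fact about product uniformities, I would instead produce the separating chain interval explicitly. Take $x\neq y$ in $X$ and choose a coordinate $j$ with $x_j\neq y_j$. Since $X_j\in(T^m_2)$, there is a nontrivial chain interval $[a,b]\subseteq X_j$ with $\{\phi_{a,b}(x_j),\phi_{a,b}(y_j)\}=\{a,b\}$. Define $u,v\in X$ by $u_j=a$, $v_j=b$, and $u_i=v_i=x_i$ for $i\neq j$; any fixed choice of $c_i$ would also work. The characterization of chain intervals in the proof of Proposition~\ref{p:UmProduct} shows that $[u,v]_X$ is a nontrivial chain interval.

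Because the median is computed coordinatewise, $\phi_{u,v}(z)=(\phi_{u_i,v_i}(z_i))_i$. Hence the $j$-th coordinate of $\phi_{u,v}(x)$ and of $\phi_{u,v}(y)$ lies in $\{a,b\}$, with one equal to $a$ and the other to $b$. The remaining coordinates equal $u_i=v_i$. It follows that $\{\phi_{u,v}(x),\phi_{u,v}(y)\}=\{u,v\}$, which is exactly Definition~\ref{d:medT2}.

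There is no serious obstacle. The only point requiring care is that $[u,v]_X$ is a chain, and this is already established in Proposition~\ref{p:UmProduct}.
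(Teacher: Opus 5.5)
Your proposal is correct, and your first paragraph is exactly the paper's proof: Lemma~\ref{l:medT2Hausdorff} gives Hausdorffness of each $\mathcal U_m(X_i)$, Proposition~\ref{p:UmProduct} identifies $\mathcal U_m(X)$ with the product uniformity, and products of Hausdorff uniformities are Hausdorff. Your explicit alternative is also valid. It places the separating chain $[a,b]\subseteq X_j$ in the $j$-th coordinate and freezes all other coordinates, which yields the chain interval $[u,v]_X$ and simply unwinds the same argument directly at the level of Definition~\ref{d:medT2}.
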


\begin{proof}

	If every \(X_i\) satisfies \((T_2^m)\), then each \(\mathcal{U}_m(X_i)\) is Hausdorff by Lemma~\ref{l:medT2Hausdorff}. By Proposition~\ref{p:UmProduct},
	$
	\mathcal{U}_m\!\left(\prod_{i\in I}X_i\right)=\prod_{i\in I} \mathcal{U}_m(X_i),
	$
	and the product of Hausdorff uniformities is Hausdorff. Therefore
	$
	\prod_{i\in I}X_i
	$
	satisfies \((T_2^m)\).
\end{proof}

\begin{cor}\label{c:MMCProduct}
	Let $\{X_i\}_{i\in I}$ be a family of median algebras
	satisfying $(T^m_2)$. Then the MMC commutes with arbitrary
	products:
	\[
	\widehat{\prod_{i\in I}X_i}
	\cong
	\prod_{i\in I}\widehat X_i
	\]
	canonically, as compact topological median algebras.
\end{cor}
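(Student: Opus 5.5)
The plan is to use the product formula of Proposition~\ref{p:UmProduct} together with the standard fact that uniform completion commutes with products of uniform spaces. Put $X=\prod_{i\in I}X_i$. By Proposition~\ref{p:T2product}, $X\in(T^m_2)$, so its MMC $\widehat X$ is defined (Theorem~\ref{t:MMC}). By Lemma~\ref{l:medT2Hausdorff} each $\mathcal U_m(X_i)$ is Hausdorff, and each $\widehat X_i$ is the Hausdorff completion of $(X_i,\mathcal U_m(X_i))$. Proposition~\ref{p:UmProduct} gives $\mathcal U_m(X)=\prod_i\mathcal U_m(X_i)$. The product $\prod_i\widehat X_i$ is compact Hausdorff by Tychonoff, hence complete in its unique uniformity. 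That uniformity is the product of the completion uniformities, and it induces $\prod_i\mathcal U_m(X_i)$ on the subset $\prod_i X_i$. Moreover $\prod_i X_i$ is dense in $\prod_i\widehat X_i$, since basic open sets restrict only finitely many coordinates and each $X_i$ is dense in $\widehat X_i$.

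Hence $\prod_i\widehat X_i$ is a complete Hausdorff uniform space containing $(X,\mathcal U_m(X))$ as a dense uniform subspace. By uniqueness of the Hausdorff completion, the inclusion extends to a uniform isomorphism $\Psi\colon \widehat X\to\prod_i\widehat X_i$ that is the identity on $X$. Equivalently, one can use the universal property: the projections $p_i\colon X\to X_i$ are $\mathcal U_m$-uniformly continuous by the proof of Proposition~\ref{p:UmProduct}, so they extend to $\widehat p_i\colon\widehat X\to\widehat X_i$. The diagonal $\Psi=(\widehat p_i)_i$ is then continuous and restricts to the identity on $X$, and its inverse comes from the completion property.

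It remains to check that $\Psi$ is median-preserving. The median on $\prod_i\widehat X_i$ is coordinatewise, so it is continuous, and on the dense subset $X$ it agrees with the median of $X$. The median $\widehat m$ on $\widehat X$ is the unique continuous extension of $m_X$. Since $\Psi\circ\widehat m$ and $m_\Pi\circ(\Psi\times\Psi\times\Psi)$ are continuous maps from $\widehat X^3$ that agree on the dense set $X^3$, and the target is Hausdorff, they coincide. Canonicity follows because $\Psi$ is the unique continuous map extending the identity of $X$.

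The only point needing care is the density and uniform-subspace claim for the product of completions. This is a standard fact in uniform topology (for example in Engelking), and I would cite it rather than reprove it. The compactness of $\prod_i\widehat X_i$ removes any completeness issue for infinite $I$.
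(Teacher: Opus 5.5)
Your proposal is correct and follows essentially the same route as the paper: the product formula $\mathcal U_m(\prod_i X_i)=\prod_i\mathcal U_m(X_i)$ of Proposition~\ref{p:UmProduct}, combined with the standard fact that Hausdorff completion commutes with products. You also spell out what the paper leaves implicit in ``apply Theorem~\ref{t:MMC}'': the product satisfies $(T^m_2)$ by Proposition~\ref{p:T2product}, $\prod_i X_i$ is dense in $\prod_i\widehat X_i$, and the canonical uniform isomorphism is median-preserving by continuity and density.
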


\begin{proof}
	By Proposition~\ref{p:UmProduct}, the median-uniformity on the product is the product uniformity.
	Since all factors are Hausdorff, their Hausdorff completions exist, and completion preserves products
	of Hausdorff uniform spaces. Hence the Hausdorff completion of $\prod_{i\in I} X_i$ with respect to its median-uniformity is canonically $\prod_{i\in I} \widehat{X_i}.$ Now apply Theorem~\ref{t:MMC}.
\end{proof}

\begin{cor} 
	For \(X=\mathbb R^n\) equipped with the coordinatewise median, the MMC is 
$\widehat{X} \cong [-\infty,+\infty]^n.$
\end{cor}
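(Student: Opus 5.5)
The plan is to deduce this from Corollary~\ref{c:MMCProduct} together with the LOTS case, Proposition~\ref{p:LOTS_case}(1). First, $\mathbb R$ with its order median is a chain, hence a median pretree, so $\mathbb R\in(T^m_2)$ by Theorem~\ref{t:MedPretreeMMC}(1); alternatively, this follows from Theorem~\ref{t:FiniteRankHausd}, since the rank is $1$. By Proposition~\ref{p:LOTS_case}(1), $\mathcal U_{\mathrm m}(\mathbb R)$ is the interval uniformity $\mu_\le$. Its completion is therefore the Dedekind compactification of $(\mathbb R,\le)$, in the sense of Fact~\ref{f:DedekindUniformityInternal}.

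Next we identify this Dedekind compactification with the two-point extension $[-\infty,+\infty]$. The real line is Dedekind complete, so the only cuts not already realized in $\mathbb R$ are the two ends. Concretely, $[-\infty,+\infty]$ is a compact LOTS containing $\mathbb R$ as a dense ordered subspace, and by Fact~\ref{f:DedekindUniformityInternal} its trace uniformity on $\mathbb R$ is generated by the star-covers $\mathcal C_F$. If one prefers to avoid appealing to the cut description, there is a direct check. Every cover of $\mathbb R$ by two rays $\{(-\infty,b),(a,\infty)\}$ with $a<b$ extends to the open cover $\{[-\infty,b),(a,+\infty]\}$ of $[-\infty,+\infty]$. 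Conversely, every finite open cover of the compact space $[-\infty,+\infty]$ is refined by a finite wedge of such extended two-ray covers, by a Lebesgue-number argument via the homeomorphism $[-\infty,+\infty]\cong[-1,1]$. Hence the uniformities agree, and $\widehat{\mathbb R}\cong[-\infty,+\infty]$ with the order median. This order median is the unique continuous extension of the median of $\mathbb R$.

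Finally, $\mathbb R^n$ with the coordinatewise median is the product median algebra $\prod_{i=1}^n\mathbb R$. Corollary~\ref{c:MMCProduct} then gives $\widehat{\mathbb R^n}\cong\prod_{i=1}^n\widehat{\mathbb R}\cong[-\infty,+\infty]^n$, as compact topological median algebras with the coordinatewise median. As a side remark, Proposition~\ref{p:UmProduct} also shows that $\tau_{\mathrm m}$ is the usual Euclidean topology on $\mathbb R^n$, so this MMC is a compactification of $\mathbb R^n$ with its standard topology.

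The only step with genuine content is the identification of the Dedekind compactification of $\mathbb R$ with $[-\infty,+\infty]$. This is classical, and I would handle it by the explicit comparison of two-ray covers described above.
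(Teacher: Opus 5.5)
Your proposal is correct and follows the paper's own route: the paper's proof is exactly Proposition~\ref{p:LOTS_case}(1) applied to the factor $\mathbb R$, combined with Corollary~\ref{c:MMCProduct} for the product. You additionally verify $\mathbb R\in(T^m_2)$ and spell out why the Dedekind compactification of $\mathbb R$ is $[-\infty,+\infty]$, a step the paper leaves implicit; your two-ray-cover and Lebesgue-number check of that identification is sound.
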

\begin{proof}
	This follows from Proposition~\ref{p:LOTS_case}(1) and
	Corollary~\ref{c:MMCProduct}.
\end{proof}

\begin{problem}  \label{q:ExplicitMMC}
Find explicit descriptions of the MMC $\widehat X$ for natural median algebras $X\in(T^m_2)$ in terms of intrinsic algebraic or geometric data. 
\end{problem}

\section{G-compactifications of median actions} 
  
As we already mentioned 
(in Section \ref{s:FiniteAndDiscrete})   
there exists a continuous median-preserving action of a Polish
group \(G\) on a Polish space \(X\) such that \(X\) admits no proper \(G\)-compactification \cite{Me-88,Me-b}.

We show in Theorem \ref{t:Gcompactification} that, for median algebras satisfying \((T^m_2)\) and equipped with their intrinsic median topology, every (separately) continuous action of a topological group \(G\) by median automorphisms is \(G\)-compactifiable via the MMC.

Recall that a uniformity \(\mathcal U\) on a \(G\)-space \(X\) is
\(G\)-\textit{bounded} if, for every \(\alpha\in\mathcal U\), there
exists a neighborhood \(V\) of the identity such that the cover
\(\{Vx:x\in X\}\) refines \(\alpha\). If, in addition, every
\(g\)-translation is a uniform automorphism, then \(\mathcal U\) is
an \textit{equiuniformity}. By
\cite[Fact~3.5]{Me-MaxEqComp}, the Hausdorff completion of an
equiuniform \(G\)-space carries a unique jointly continuous extended
action. If \(\mathcal U\) is precompact, this completion is compact.

\begin{thm} \label{t:Gcompactification}
	Let \(X\) be a median algebra equipped with its canonical \(m\)-topology \(\tau_{\mathrm m}\).
	Let \(G\) be a topological group acting separately continuously on \(X\) by median
	automorphisms. Then:
	\begin{enumerate}
		\item The \(m\)-uniformity \(\mathcal U_m\) is \(G\)-bounded.
		\item If, in addition, \(X\in (T^m_2)\), then the action extends uniquely to a
		jointly continuous action
		$
		G\times \widehat X\to \widehat X
		$
		on the MMC. Hence \(X\to \widehat X\) is a proper median \(G\)-compactification.
		In particular, the original action \(G\times X\to X\) is jointly continuous.
	\end{enumerate}
\end{thm}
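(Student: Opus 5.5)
Since every element \(g\in G\) acts as a median automorphism, it maps chain intervals onto chain intervals and branches onto branches: \(g[u,v]=[gu,gv]\) and \(gB^u_v=B^{gu}_{gv}\). Hence each translation permutes the subbasic covers \(\mathcal B_{u,v}\), so it is a uniform automorphism of \(\mathcal U_{\mathrm m}\). For part (1), because \(\mathcal U_{\mathrm m}\) has the subbase of covers \(\mathcal B_{u,v}\), it suffices to check \(G\)-boundedness on finitely many subbasic covers and then intersect the resulting identity neighborhoods. Concretely, if \(V_k\) works for \(\mathcal B_{u_k,v_k}\), then \(V=\bigcap_k V_k\) satisfies
\[
\{Vx\}\succ\bigwedge_k\mathcal B_{u_k,v_k}.
\]
The inclusion \(Vx\subseteq\bigcap_k A_k\) follows once we choose, for each \(k\), a member \(A_k\) with \(V_kx\subseteq A_k\).

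So fix a nontrivial chain interval \([u,v]\). We need a neighborhood \(V\) of \(e\) such that every orbit piece \(Vx\) lies in \(B^u_v\) or in \(B^v_u\). Equivalently, \(Vx\) must not meet both shadows \(S^u_v=\phi_{u,v}^{-1}(v)\) and \(S^v_u=\phi_{u,v}^{-1}(u)\). We use separate continuity in the orbit variable. The maps \(g\mapsto gu\) and \(g\mapsto gv\) are continuous into \((X,\tau_{\mathrm m})\), and \(u\in B^u_v\), \(v\in B^v_u\) are \(\tau_{\mathrm m}\)-open. Hence there is a symmetric neighborhood \(V\) of \(e\) with \(V^{-1}V u\subseteq B^u_v\) and \(V^{-1}Vv\subseteq B^v_u\).

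Now suppose \(gx\in S^u_v\) and \(hx\in S^v_u\) for some \(g,h\in V\). Put \(y=gx\) and \(k=hg^{-1}\in VV^{-1}\). Then \(v\in[y,u]\) and \(u\in[ky,v]\). Applying \(k^{-1}\) to the second relation gives \(k^{-1}u\in[y,k^{-1}v]\). We then aim for a contradiction using the convexity of halfspaces. The point \(y\) lies in the convex halfspace \(S^u_v\), whose complement \(B^u_v\) contains \(u\) and \(k^{-1}u\). The point \(k^{-1}u\) lies in \(S^{k^{-1}v}_{k^{-1}u}=k^{-1}S^v_u\).

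The main obstacle is to turn these relations into a contradiction. My first attempt is a four-point median computation on the chain \([u,v]\). Project everything by \(\phi_{u,v}\). We have \(\phi_{u,v}(y)=v\), while \(\phi_{u,v}(k^{-1}u)\) is close to \(u\) and \(\phi_{u,v}(k^{-1}v)\) is close to \(v\), in the sense that these points lie in \((u,v]\setminus\{v\}\)-type neighborhoods. The median-homomorphism property of \(\phi_{u,v}\) should then give
\[
\phi_{u,v}(k^{-1}u)\in[v,\phi_{u,v}(k^{-1}v)]
\]
inside the chain, which forces \(\phi_{u,v}(k^{-1}u)\) to be near \(v\).

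If the neighborhoods are chosen carefully, the rays separate this. We would choose \(c<_u d\) in \((u,v)\) when the chain is not an adjacent pair, and require \(V^{-1}Vu\subseteq B^u_c\) and \(V^{-1}Vv\subseteq B^v_d\). For adjacent \(u,v\), the requirement \(V^{-1}Vu\subseteq B^u_v=S^v_u\) is already enough. If this projection argument has gaps, a fallback is to apply \(G\)-equivariance to the chain \([k^{-1}u,k^{-1}v]\) and use Lemma~\ref{l:compositionOFgates}(2). That lemma gives an isomorphism between \([u,v]\) and the projected interval whenever \(\phi_{u,v}(k^{-1}u)=u\) and \(\phi_{u,v}(k^{-1}v)=v\), and these conditions hold by the choice of \(V\).

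For part (2), once \(\mathcal U_{\mathrm m}\) is a \(G\)-bounded uniformity for which every translation is a uniform automorphism, it is an equiuniformity. By \cite[Fact~3.5]{Me-MaxEqComp}, the action then extends uniquely to a jointly continuous action on the compact Hausdorff completion \(\widehat X\), using Theorem~\ref{t:MMC} and \(X\in(T^m_2)\). Each extended translation is continuous and median-preserving on the dense subalgebra \(X\), so it is a median automorphism of \(\widehat X\). Joint continuity on \(X\) follows by restriction, because \(X\hookrightarrow\widehat X\) is a topological embedding.
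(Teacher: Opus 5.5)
Your argument is correct and is essentially the paper's proof. Both project through the median homomorphism $\phi_{u,v}$ onto the chain $[u,v]$ and use neighborhoods $\phi_{u,v}^{-1}([u,c))=B^u_c$ of $u$ and $\phi_{u,v}^{-1}((d,v])=B^v_d$ of $v$ that lie strictly on opposite sides; the only cosmetic difference is that you merge $g,h$ into $k=hg^{-1}\in VV^{-1}$, whereas the paper compares $\phi_{u,v}(x)$ with $\phi_{u,v}(g_1^{-1}v)$ and $\phi_{u,v}(g_2^{-1}u)$ separately.

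Your projection step has no gap, since it yields $\phi_{u,v}(k^{-1}v)\leq_u\phi_{u,v}(k^{-1}u)<_u c<_u d<_u\phi_{u,v}(k^{-1}v)$. You should therefore drop the fallback, whose claim that $\phi_{u,v}(k^{-1}u)=u$ and $\phi_{u,v}(k^{-1}v)=v$ hold by the choice of $V$ does not follow when $u,v$ are not adjacent.
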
 
\begin{proof}  
	By Definition \ref{d:StarMedianUniformity}, $\mathcal{U}_{\mathrm{m}}$ admits a canonical uniform subbase $\mathcal{S}$ consisting of all $2$-element covers of the form $\mathcal{B}_{u,v} = \{B^u_v, B^v_u\}$, where $u \neq v$ are points such that the interval $[u,v]$ is a chain. 
	For every \(g\in G\), since \(x\mapsto gx\) is a median automorphism, we have
	$
	g([u,v])=[gu,gv].
	$
	Hence \([u,v]\) is a chain iff \([gu,gv]\) is a chain, and
	$
	g(B^u_v)=B^{gu}_{gv}.
	$ 
	Therefore each \(g\)-translation is a uniform automorphism of \((X,\mathcal U_m)\) sending subbasic intrinsic covers to subbasic intrinsic covers.
	
	We now prove that \(\mathcal U_m\) is \(G\)-bounded.  
	Since finite refinements of \(G\)-bounded covers are again
	\(G\)-bounded, and since every uniform cover is coarser than the finite common refinement of subbasic covers, it is enough to verify \(G\)-boundedness on the
	subbasic covers \(\mathcal B_{u,v}=\{B^u_v,B^v_u\}\), where
	\([u,v]\) is a nontrivial chain interval and
	\(
	B^u_v=\{z\in X:m(u,z,v)\neq v\},\
	B^v_u=\{z\in X:m(u,z,v)\neq u\}.
	\) 
	
	We prove that \(\mathcal B_{u,v}\) is \(G\)-bounded. We must find a neighborhood $V$ of $e \in G$ such that for every $x \in X$, the orbit $Vx$ is entirely contained in $B^u_v$ or entirely contained in $B^v_u$.
	
	Consider the natural order $\leq_u$ on the chain $[u,v]$, defined
	by
	$
	x\leq_u y\Longleftrightarrow x\in[u,y].
	$
	Since \(u<_u v\), there exist open
	neighborhoods \(W_u\) of \(u\) and \(W_v\) of \(v\) in the interval topology
	of \([u,v]\) such that
	\[
	a<_u b\qquad\text{for every }a\in W_u,\ b\in W_v .
	\]
	Indeed, if \((u,v)\neq\emptyset\), choose \(c\in(u,v)\) and take
	\(W_u=[u,c)\), \(W_v=(c,v]\); if \(u,v\) are adjacent, take
	\(W_u=\{u\}\), \(W_v=\{v\}\).
	
	By Proposition~ \ref{p:LOTS_case}(2), the retraction
	$
	\phi_{u,v} \colon (X,\tau_{\mathrm m})\to([u,v],\lambda_{\leq_u})
	$
	is continuous.  
	Since the orbit maps \(g\mapsto gu\) and \(g\mapsto gv\) are
	continuous and inversion in \(G\) is continuous, the maps
	\[
	G \to [u,v], \ g\mapsto \phi_{u,v}(g^{-1}u),\qquad
	G \to [u,v], \ g\mapsto \phi_{u,v}(g^{-1}v)
	\]
	are continuous at \(e\). Hence there is a symmetric neighborhood
	\(V=V^{-1}\) of \(e\) such that 
	\[
	u_g:=\phi_{u,v}(g^{-1}u)\in W_u,\qquad
	v_g:=\phi_{u,v}(g^{-1}v)\in W_v .
	\]
	for every \(g\in V\). 
	In particular \(u_g<_u v_h\) for all \(g,h\in V\).
	
	We claim that for every given \(x\in X\), the set \(Vx\) is contained in one member
	of \(\mathcal B_{u,v}\). Suppose not. Then there exist \(g_1,g_2\in V\) such that
	\(
	g_1x\notin B^u_v,\ g_2x\notin B^v_u .
	\) 
	The first condition means \(m(u,g_1x,v)=v\). Applying \(g_1^{-1}\) gives
	\(
	m(g_1^{-1}u,x,g_1^{-1}v)=g_1^{-1}v .
	\) 
	
	Since $\phi_{u,v}\colon X\to [u,v]$ is a median homomorphism, for
	all $a,b,c\in X$ we have
	\[
	\phi_{u,v}\bigl(m(a,b,c)\bigr)
	=
	m_{[u,v]}\bigl(\phi_{u,v}(a),\phi_{u,v}(b),
	\phi_{u,v}(c)\bigr),
	\]
	where $m_{[u,v]}$ denotes the median operation restricted to
	$[u,v]$. Applying this identity to
	\[
	g_1^{-1}v=m(g_1^{-1}u,x,g_1^{-1}v)
	\]
	and using the definitions
	\(
	u_{g_1}=\phi_{u,v}(g_1^{-1}u),
	\ 
	v_{g_1}=\phi_{u,v}(g_1^{-1}v),
	\) 
	we obtain
	\[
	v_{g_1}
	=
	m_{[u,v]}\bigl(u_{g_1},\phi_{u,v}(x),v_{g_1}\bigr).
	\]
	Since $[u,v]$ is a chain and $u_{g_1}<_u v_{g_1}$, its median is
	the order median. Therefore the last equality is possible only if
	\(
	v_{g_1}\leq_u\phi_{u,v}(x).
	\) 	
	Similarly, the second condition means \(m(u,g_2x,v)=u\). Applying \(g_2^{-1}\)
	and then \(\phi_{u,v}\), we get
	\[
	u_{g_2}
	=
	m_{[u,v]}(u_{g_2},\phi_{u,v}(x),v_{g_2}),
	\]
	and since \(u_{g_2}<_u v_{g_2}\), this implies
	$
	\phi_{u,v}(x)\leq_u u_{g_2}.
	$
	Thus
	\[
	v_{g_1}\leq_u \phi_{u,v}(x)\leq_u u_{g_2}.
	\]
	But \(v_{g_1}\in W_v\) and \(u_{g_2}\in W_u\), so by the choice of
	\(W_u,W_v\) we have
	\[
	u_{g_2}<_u v_{g_1},
	\]
	a contradiction. Therefore \(Vx\) is contained in one member of
	\(\mathcal B_{u,v}\) for every \(x\in X\), and \(\mathcal B_{u,v}\) is
	\(G\)-bounded. 
	We have shown that \(\mathcal U_m\) is \(G\)-bounded.  
	At the beginning we already established that 
	each \(g\)-translation is a uniform automorphism of \((X,\mathcal U_m)\).   
	So, \(\mathcal U_m\) is an equiuniformity.
	
	If $X\in(T^m_2)$, then $\mathcal U_m$ is Hausdorff, and its
	completion $\widehat X$ is the MMC by
	Theorem~\ref{t:MMC}. Since $\mathcal U_m$ is a precompact
	equiuniformity, the completion theorem
	\cite[Fact~3.5]{Me-MaxEqComp} yields a unique jointly continuous
	extended action
	$
	G\times\widehat X \longrightarrow \widehat X.
	$
	Hence the original action on \(X\) is jointly continuous. 
	Each extended \(g\)-translation on \(\widehat X\) is a median automorphism: its inverse is the extended \(g^{-1}\)-translation, and median preservation follows from the density of \(X\) and the
	continuity of the extended median operation. 
\end{proof}

\begin{remark}\label{r:CompanionComparison}
The companion preprint \cite{Me-TameMedian} studies
	continuous actions on Roller--Fioravanti compactifications under
	the assumption that all median intervals are compact, using the
	uniformity induced by all canonical interval retractions and a
	related $G$-boundedness argument.
	
	The setting here is different. The intrinsic uniformity
	$\mathcal U_m$ is generated by retractions onto chain intervals
	alone, no compactness assumption on arbitrary intervals is
	required, and separate continuity of the action on
	$(X,\tau_{\mathrm m})$ is sufficient. The preprint
	\cite{Me-TameMedian} is used in the present paper only for the
	final conclusion that continuous actions on compact
	finite-rank median algebras are Rosenthal representable and
	dynamically tame.
\end{remark}

A Banach space $V$ is said to be {\em Rosenthal} if it does not contain an isomorphic copy of $\ell_1$, or equivalently, if $V$ does not contain a sequence which is equivalent to an $\ell_1$-sequence. 

Let $V$ be a Banach space and let $\operatorname{Iso}(V)$ be the topological group (with the strong operator topology) of all linear onto isometries $V \to V$. For every continuous homomorphism $h \colon G \to \operatorname{Iso}(V)$, we have a canonically induced dual continuous action on the weak-star compact unit ball $B_{V^*}$ of the dual space $V^*$. So, we get a $G$-space $B_{V^*}$. 

A natural question is which continuous actions 
of $G$ on a topological space $X$ can be represented as a $G$-invariant subspace of $B_{V^*}$ for a certain Banach space $V$ from a nice class of (low-complexity) spaces.   
A \(G\)-space \(X\) is said to be \textit{Rosenthal representable} if there exist a Rosenthal Banach space \(V\), a continuous homomorphism
\(
h \colon G\to \operatorname{Iso}(V),
\) 
and a \(G\)-equivariant topological embedding
\(
X\to B_{V^*},
\) 
where \(B_{V^*}\) carries its weak-star topology and the dual
\(G\)-action.

Median preserving continuous $G$-actions of a topological group $G$ on compact median pretrees 
are Rosenthal representable \cite{GM-D}.   
It is well known (see \cite{GM-tLN,GM-TC,Me-b}) 
that Rosenthal representability of compact $G$-spaces implies dynamical tameness. 
A compact $G$-system $X$ is said to be \emph{tame} (see, for example, \cite{GM-tLN,GM-TC}) if for every continuous real function $f \in C(X)$ its orbit $fG = \{f_g \colon  X \to \R \mid f_g(x) = f(gx), g \in G\}$ is combinatorially small; namely, if $fG$ does not contain any infinite  
\textit{independent} (in the sense of H.P. Rosenthal \cite{Ro}) sequence.

\begin{cor}[Canonical $G$-compactifiability and tameness]
	\label{c:CanonicalGComp}
	Let $X\in(T^m_2)$ be equipped with its intrinsic median
	topology. Then every separately continuous action of a
	topological group $G$ on $X$ by median automorphisms admits a
	proper median $G$-compactification, namely the MMC. In
	particular, the original action is jointly continuous.
	
	If $X$ has finite rank, then the compact $G$-system
	$\widehat X$ is Rosenthal representable and dynamically tame.
\end{cor}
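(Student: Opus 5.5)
The first part of the corollary follows directly from Theorem~\ref{t:Gcompactification}. Since $X\in(T^m_2)$, Lemma~\ref{l:medT2Hausdorff} shows that $\mathcal U_{\mathrm m}$ is Hausdorff. Theorem~\ref{t:MMC} then gives the MMC $\nu_m\colon X\hookrightarrow\widehat X$ as a proper median compactification of $(X,\tau_{\mathrm m})$. Part~(1) of Theorem~\ref{t:Gcompactification} says that $\mathcal U_{\mathrm m}$ is $G$-bounded, and its proof shows that every $g$-translation is a uniform automorphism, so $\mathcal U_{\mathrm m}$ is a precompact equiuniformity. Part~(2) then extends the action uniquely to a jointly continuous action on $\widehat X$ by median automorphisms. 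This makes $\nu_m$ a proper median $G$-compactification. Restricting the extended action to the invariant dense subspace $X$ shows that the original action is jointly continuous. No new argument is needed here; I only need to check that the hypotheses of the corollary match those of Theorem~\ref{t:Gcompactification} verbatim.

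For the finite-rank part, I first control the compactum. By Theorem~\ref{t:FiniteRankHausd}, $\operatorname{rank}(\widehat X)=\operatorname{rank}(X)<\infty$, using Fact~\ref{f:facts2}(1) for the dense subalgebra $X$. So $\widehat X$ is a compact Hausdorff topological median algebra of finite rank, and it is locally convex by \cite[12.2.4 and 12.2.5]{BowditchMedian}, or directly by Proposition~\ref{p:MMCisMedHausd}. The $G$-action on $\widehat X$ is jointly continuous and acts by continuous median automorphisms. In particular every interval of $\widehat X$ is compact, being closed in a compact space. Hence $\widehat X$ coincides with its own Roller--Fioravanti compactification, or at least falls within the compact-interval setting treated in the companion preprint.

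Next I invoke the result of \cite{Me-TameMedian} stated in Remark~\ref{r:CompanionComparison}: a continuous action of a topological group by median automorphisms on a compact finite-rank median algebra is Rosenthal representable. Applied to $G\times\widehat X\to\widehat X$, this shows that $\widehat X$ is Rosenthal representable. Tameness then follows from the general fact quoted before the corollary that Rosenthal representability of compact $G$-systems implies tameness (see \cite{GM-tLN,GM-TC,Me-b}).

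The main obstacle is purely one of bookkeeping: checking that the hypotheses required in \cite{Me-TameMedian} (compactness, Hausdorffness, finite rank, possibly local convexity, and joint continuity of the action by median automorphisms) are all provided by the steps above. I would spell out each one explicitly, in particular that the extended translations are median automorphisms, as shown at the end of the proof of Theorem~\ref{t:Gcompactification}.
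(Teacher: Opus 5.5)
Your proposal is correct and follows the paper's proof. The first part comes directly from Theorem~\ref{t:Gcompactification}, and the finite-rank part uses Fact~\ref{f:facts2}(1) to get $\operatorname{rank}(\widehat X)=\operatorname{rank}(X)<\infty$ before applying \cite[Theorem~5.2]{Me-TameMedian} to the compact $G$-system $\widehat X$; your extra checks (local convexity, compact intervals, the RF identification) are unnecessary, and deriving tameness from Rosenthal representability rather than directly from the cited theorem is an equally valid route.
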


\begin{proof}
	By Theorem \ref{t:Gcompactification}, the extended action $G \times \widehat{X} \to \widehat{X}$ is jointly continuous, making $\widehat{X}$ a proper median $G$-compactification of $X$. 
	
	Assume now that $X$ has finite rank. Since $X$ is dense in
	$\widehat X$, Fact~\ref{f:facts2}(1) gives
	\(
	\operatorname{rank}(\widehat X)
	=
	\operatorname{rank}(X)<\infty.
	\)
	Hence $\widehat X$ is a compact finite-rank median algebra.
	By \cite[Theorem~5.2]{Me-TameMedian}, the compact
	$G$-system $\widehat X$ is Rosenthal representable and
	dynamically tame.
\end{proof}

\begin{cor} \label{cor:boundary_action_continuous}
	Let $X$ be a median algebra satisfying $(T^m_2)$ equipped with its canonical median topology $\tau_{\mathrm m}$, and let $G$ be a topological group acting on $X$ by median automorphisms. If the action $G \times X \to X$ is separately   continuous, then the induced action of $G$ on the median-boundary $\partial_m(X) = \widehat{X} \setminus X$ is jointly continuous. 
	
	Furthermore, if $X$ has finite rank, the boundary $G$-space $\partial_m(X)$ is Rosenthal representable.
\end{cor}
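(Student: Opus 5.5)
The plan is to obtain both assertions of Corollary~\ref{cor:boundary_action_continuous} by restricting to the boundary the extended action given by Theorem~\ref{t:Gcompactification}(2) and Corollary~\ref{c:CanonicalGComp}. First, since $X\in(T^m_2)$ and the action on $(X,\tau_{\mathrm m})$ is separately continuous by median automorphisms, Theorem~\ref{t:Gcompactification}(2) yields a unique jointly continuous action $G\times\widehat X\to\widehat X$ extending the original one, where each $g$-translation is a homeomorphism of $\widehat X$ (its inverse is the extended $g^{-1}$-translation). The main point is to check that $\partial_m(X)=\widehat X\setminus X$ is $G$-invariant. Since each $g$ maps $X$ onto $X$ (the original action on $X$ is by bijections, with inverse $g^{-1}$) and the extended translation $\widehat g$ is a bijection of $\widehat X$ restricting to $g$ on $X$, we get $\widehat g(\widehat X\setminus X)=\widehat X\setminus \widehat g(X)=\widehat X\setminus X$. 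Thus the restriction $G\times\partial_m(X)\to\partial_m(X)$ is well defined, and it is jointly continuous as the restriction of a jointly continuous map to the invariant subspace $G\times\partial_m(X)$ (with the subspace topology).

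For the second assertion, assume $\operatorname{rank}(X)<\infty$. By Corollary~\ref{c:CanonicalGComp}, the compact $G$-system $\widehat X$ is Rosenthal representable: there exist a Rosenthal Banach space $V$, a continuous homomorphism $h\colon G\to\operatorname{Iso}(V)$, and a $G$-equivariant topological embedding $j\colon\widehat X\to B_{V^*}$. Composing $j$ with the inclusion of the $G$-invariant subspace $\partial_m(X)$ gives a $G$-equivariant topological embedding $\partial_m(X)\to B_{V^*}$, using the same $V$ and $h$. By the definition of Rosenthal representability recalled above, which requires only an equivariant embedding and no compactness, $\partial_m(X)$ is Rosenthal representable. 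Since the definition passes directly to invariant subspaces, there is no genuine obstacle here.

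The only step requiring care is the invariance of the boundary, which relies on the extended translations being bijective and on $G$ preserving $X$ setwise; both follow from the construction in Theorem~\ref{t:Gcompactification}. We should also note that the boundary need not be compact (or even nonempty, e.g.\ when $(X,\tau_{\mathrm m})$ is already compact, as in Example~\ref{ex:MetricHedgehog}). This causes no difficulty, since joint continuity and representability are both inherited by subspaces.
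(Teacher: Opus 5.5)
Your proposal is correct and follows essentially the same route as the paper's proof. You restrict the jointly continuous extended action from Theorem~\ref{t:Gcompactification} to the $G$-invariant complement $\widehat X\setminus X$, then pass Rosenthal representability of the compact finite-rank system $\widehat X$ (via \cite{Me-TameMedian}) to this invariant subspace; your explicit bijectivity argument for the invariance of $\partial_m(X)$ just spells out a step the paper states without detail.
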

\begin{proof}
	By Theorem~\ref{t:Gcompactification}, the action extends to a jointly continuous action
	\(G\times\widehat X\to\widehat X\). Since \(X\) is \(G\)-invariant in
	\(\widehat X\), its complement
	$
	\partial_m(X)=\widehat X\setminus X
	$
	is also \(G\)-invariant. Therefore the restricted action
	$
	G\times\partial_m(X)\to\partial_m(X)
	$
	is jointly continuous. 
	If \(X\) has finite rank, then \(\widehat X\) is a compact finite-rank median
	algebra. By \cite{Me-TameMedian}, the compact \(G\)-system \(\widehat X\) is
	Rosenthal representable. Since \(\partial_m(X)\) is a \(G\)-subspace of
	\(\widehat X\), it is Rosenthal representable as well. 
\end{proof}

\bibliographystyle{amsplain}  

\begin{thebibliography}{10}

\bibitem{B}
B.H. Bowditch, \textit{Treelike structures arising from continua and convergence groups}, 
Mem. Am. Math. Soc., \textbf{139} (1999), n. 662

\bibitem{B14}
B.H. Bowditch, \textit{Embedding median algebras in products of trees},  Geometriae Dedicata \textbf{170} (2014), 157--176

\bibitem{BowditchMedian}
B.H. Bowditch, \emph{Median Algebras}, preprint,
first draft March 2022, revised 12 May 2024, 280 pp., Available on author's homepage  
\url{https://bhbowditch.com/papers/median-algebras.pdf} 

\bibitem{CDH} 
I. Chatterji, C. Drutu, F. Haglund, \textit{Kazhdan and Haagerup properties from the median
	viewpoint}, Adv. Math. \textbf{225} (2010) 882--921

\bibitem{CMT} 
M. Couceiro, J.L. Marichal, B. Teheux  
\textit{Conservative Median Algebras and Semilattices}, Order, \textbf{33} (2016), 121--132


\bibitem{Eng89}
R. Engelking, {\em  General Topology\/} 
(revised and completed edition), 
Heldermann Verlag, Berlin, 1989 

\bibitem{Fioravanti20}
E. Fioravanti, \emph{Roller boundaries for median spaces and algebras}, Algebraic and Geometric Topology \textbf{20} (2020), 1325--1370

	
	\bibitem{GM-tLN}
	E. Glasner and M. Megrelishvili,
	\emph{More on tame dynamical systems}, in: Lecture Notes in Math. v. 2213, Ergodic Theory and Dynamical Systems in their Interactions with Arithmetics and Combinatorics, Eds.: S. Ferenczi, J. Kulaga-Przymus, M. Lemanczyk,  Springer, 2018, pp. 351--392  

	
	\bibitem{GM-D}
	E. Glasner and M. Megrelishvili, 
	\emph{Group actions on treelike compact spaces}, Science China Math., 
	\textbf{62} (2019), n. 12, 2447--2462   
	

	\bibitem{GM-TC} 
	E. Glasner and M. Megrelishvili, 
	\textit{Todor\u{c}evi\'{c}' Trichotomy and a hierarchy in the class of tame dynamical systems}, Trans. Amer. Math. Soc. \textbf{375} (2022), 4513--4548  
		
\bibitem{Isb}
J.R. Isbell,
\emph{Uniform Spaces},
Mathematical Surveys, v.~12,
Amer. Math. Soc., Providence, RI, 1964 
	
	\bibitem{KKT} 
	W. Kubis, A. Kucharski and S. Turek, 
	\textit{Parovicenko spaces with structures}, 
	RACSAM \textbf{108} (2014), 989--1004
				
	\bibitem{Mal14} 
	A.V. Malyutin, \textit{Pretrees and the shadow topology}, 
	St. Petersburg Math. J., \textbf{26} (2015), 225--271	
	
	
	\bibitem{Me-88}
	M.~Megrelishvili,
	{\it A Tychonoff $G$-space not admitting a compact Hausdorff $G$-extension or a $G$-linearization},
	Russian Math. Surveys {\bf 43}:2 (1988), 177--178
	
	
	\bibitem{Me-MaxEqComp}  
	M. Megrelishvili, 
	\textit{Maximal equivariant compactifications}, Topology Appl. \textbf{329} (2023), 108372 
	
	\bibitem{Me-b}
	M. Megrelishvili, 
	\textit{Topological Group Actions and Banach Representations}, 2026, unpublished book, Available on author's homepage  \url{https://u.math.biu.ac.il/~megereli/B.pdf} 
	
	\bibitem{Me-CircTop}  
	M. Megrelishvili, 
	\textit{Circular orders: Topology and continuous actions}, arXiv:2512.17314v3, 2026 
	
	\bibitem{Me-TameMedian}
	M. Megrelishvili,
	\emph{Tameness of actions on finite rank median algebras}, 
	arXiv:2601.01681v5, 2026 

\bibitem{vanMill} 
J. van Mill, \textit{Supercompactness and Wallman Spaces}, Math. Centre Tracts \textbf{85}, Amsterdam 1977	
	

	\bibitem{Roller} 
	M.A. Roller, \textit{Poc sets, median algebras and group actions: an extended study of Dunwoody’s construction and Sageev’s theorem}, Habilitationsschrift, Universität
	Regensburg (1998) arXiv:1607.07747   
	
	\bibitem{Ro} H.P. Rosenthal, 
	\emph{A characterization of Banach spaces containing $l^1$}, Proc. Nat. Acad. Sci. U.S.A., \textbf{71} (1974), 2411--2413
	
\bibitem{Sageev95} 	
M. Sageev, \textit{Ends of group pairs and non-positively curved cube complexes}, Proc. London
Math. Soc. \textbf{71} (1995) 585--617	
		
\bibitem{Sholander54}
M. Sholander, \emph{Medians and betweenness}, Proc. Amer. Math. Soc. \textbf{5} (1954), 801--807		
		
\bibitem{Vel-book}   
M.L.J. van de Vel, \emph{Theory of Convex Structures}, North-Holland Mathematical Library, Vol. 50, Elsevier, 1993 

\bibitem{Ward} 
L. E. Ward, On dendritic sets, Duke Math. J. 25  (1958), 505--514 
		
\end{thebibliography}
     
\end{document}